\pgfplotsset{compat=1.16}
\numberwithin{equation}{section}
\newtheorem{thm}{Theorem}[section]
\newtheorem{lemma}[thm]{Lemma}
\newtheorem{prop}[thm]{Proposition}
\newtheorem{cor}[thm]{Corollary}
\newtheorem{conjecture}[thm]{Conjecture}
\theoremstyle{definition}
\newtheorem{defn}[thm]{Definition}
\theoremstyle{remark}
\newtheorem{rmk}[thm]{Remark}
\def\diam{\operatorname{diam}}
\def\diag{\operatorname{diag}}
\def\Leb{{\operatorname{Leb}}}
\def\Orb{\operatorname{Orb}}
\def\musrb{\mu_{\mbox{\tiny \textup{SRB}}}}
\def\wroot{W_{\mbox{\tiny \textup{root}}}}
\newcommand{\srb}{\mbox{\tiny \textup{SRB}}}
\def\bP{{\mathbb{P}}}
\def\bR{{\mathbb{R}}}
\def\bW{{\mathbb{W}}}
\def\eps{{\varepsilon}}
\def\ve{{\varepsilon}}
\def\vf{{\varphi}}
\def\cA{{\mathcal{A}}}
\def\cC{{\mathcal{C}}}
\def\cG{{\mathcal{G}}}
\def\cK{{\mathcal{K}}}
\def\cM{{\mathcal{M}}}
\def\cO{{\mathcal{O}}}
\def\cR{{\mathcal{R}}}
\def\cS{{\mathcal{S}}}
\def\fA{{\mathfrak{A}}}
\def\fG{{\mathfrak{G}}}
\def\fR{{\mathfrak{R}}}
\def\fS{{\mathfrak{S}}}
\def\fP{{\mathfrak{P}}}
\def\brN{{\breve{N}}}
\def\brfG{{\breve{\mathfrak{G}}}}
\def\oW{{\overline{W}}}
\newcommand*\circled[1]{\tikz[baseline=(char.base)]{
    \node[shape=rounded rectangle,draw,inner sep=0.25em] (char) {$\mathrm{#1}$};}}
\let\originalleft\left
\let\originalright\right
\renewcommand{\left}{\mathopen{}\mathclose\bgroup\originalleft}
\renewcommand{\right}{\aftergroup\egroup\originalright}
\title[Rates of mixing for the MME in dispersing billiards]{Rates of  mixing for the measure of maximal entropy of dispersing billiard maps}
\author{Mark F. Demers$^1$}
\address{$^1$Department of Mathematics, Fairfield University, Fairfield CT 06824, USA}
\email{mdemers@fairfield.edu}
\author{Alexey Korepanov$^2$}
\address{$^2$Laboratoire de Probabilit\'es, Statistique et Mod\'elisation (LPSM),
Sorbonne Universit\'e, Universit\'e de Paris, 4 Place Jussieu, 75005 Paris, France}
\email{korepanov@lpsm.paris}
\subjclass[2020]{Primary 37A25, 37C83; secondary 37A50.}
\date{27 October 2023}
\begin{document}

\begin{abstract}
    In a recent work, Baladi and Demers constructed a measure of maximal entropy
    for finite horizon dispersing billiard maps and proved that it is unique, mixing and
    moreover Bernoulli.
    We show that this measure enjoys natural probabilistic properties for H\"older continuous
    observables, such as at least polynomial decay of correlations and
    the Central Limit Theorem.

    The results of Baladi and Demers are subject to a condition of sparse recurrence to singularities.
    We use a similar and slightly stronger condition, and it has a direct effect
    on our rate of decay of correlations.
    For billiard tables with bounded complexity (a property conjectured to be generic),
    we show that the sparse recurrence condition is always satisfied and the correlations decay at a
    super-polynomial rate.
\end{abstract}

\maketitle

\section{Introduction}

Dispersing billiards were introduced by Sinai~\cite{Si63, Si70} as mechanical models of 
particles for which Boltzmann's hypotheses could be verified.  They have become 
central to mathematical physics as prototypical models of hyperbolic systems with
singularities.  Such billiards are defined by placing finitely many convex obstacles 
having boundary with strictly positive curvature on the
two-torus and following the motion of a point particle moving at unit speed and undergoing specular reflections
at collisions.  The collision-to-collision map is called the billiard map associated to the
configuration of obstacles and it is known to preserve a smooth invariant measure.

Over the ensuing decades, a wealth of information has been obtained regarding the 
statistical properties of the system with respect to the smooth invariant measure:
it is ergodic, mixing~\cite{Si70} and indeed Bernoulli~\cite{GaOr}, enjoys exponential
decay of correlations~\cite{Y98}, and a host of limit theorems including the 
Central Limit Theorem~\cite{BSC} and related invariance principles~\cite{MN05}.
See~\cite{CM06} for an excellent exposition of the subject and additional references.

Despite these successes, only recently has attention been given to other invariant measures
for dispersing billiards. Denote by $T$ the billiard map corresponding to a finite horizon
Sinai billiard and let $\cM_n$ be the set comprising the maximal
domains of continuity of $T^n$.  Let $\# A$ denote the cardinality of a set
$A$ and following \cite{BD20} define
\begin{equation}
    \label{eq:hM}
    h = \lim_{n \to \infty} \frac 1n \log \big( \# \cM_n \big)
    .
\end{equation}
We call $h$ the {\em topological entropy} of $T$.  The main result of \cite{BD20} is that
under a condition of sparse recurrence to singularities (see \eqref{eq:sparse}), $T$ satisfies a variational
principle, 
\begin{equation}
    \label{eq:h}
    h = \sup \bigl\{ h_\mu(T) : \mu \mbox{ is a $T$-invariant Borel probability measure} \bigr\}
    ,
\end{equation}
where $h_\mu(T)$ denotes the Kolmogorov-Sinai entropy of $\mu$.
Moreover, there exists a unique measure $\mu_0$, called the measure of maximal entropy (MME),
which attains the supremum, is $T$-adapted, Bernoulli and positive on open sets.  

However, left open in~\cite{BD20} is the rate of mixing of 
$\mu_0$ with respect to a reasonable class of observables.
In the present work we prove a polynomial bound on the rate of decay of correlations for $\mu_0$
and derive an almost-sure invariance principle (which implies e.g., the Central Limit Theorem).
Furthermore, assuming that a bounded complexity conjecture holds for generic billiard tables,
we prove that generically correlations decay at a super-polynomial rate.

In this context, the question naturally arises whether our rates of mixing are optimal or, in fact,
$\mu_0$ enjoys exponential decay of correlations like the smooth invariant measure.
Indeed, for many classes of smooth systems the MME mixes exponentially,
even when hyperbolicity is weak, as long as the topological entropy of the system is positive.
Notable examples are some classes of multimodal maps in~\cite{Bruin Todd, IT10},
and topologically transitive $C^\infty$ surface diffeomorphisms
with positive topological entropy in~\cite{BCS}.
In the piecewise monotone setting, the recent preprint \cite{Tiozzo} proves topological mixing for
unimodal maps whose topological entropy is greater than $\frac 12 \log 2$, while
\cite{Baladi} proves that topological mixing implies exponential mixing for the MME of such maps.
We are not aware of any natural examples where the MME is known to mix slower than exponentially.
 
It is interesting that for dispersing billiards the situation seems to be special due to the
blow up of the derivative near tangential collisions; in particular, exponential mixing of the MME
is known to follow from topological mixing if hyperbolicity dominates complexity
for piecewise hyperbolic maps without such a blow up~\cite{D21}.
One can view the MME as belonging to a family of equilibrium states corresponding to
the geometric potentials $-t \log J^uT$, $t \in \mathbb{R}$, where $J^uT$ is the Jacobian
of $T$ along unstable manifolds.  We define the {\em pressure} corresponding to such potentials by
\[
    P(t) = \sup \Bigl\{ h_\mu(T)  - t \int \log J^uT \, d\mu : \mu \mbox{ is a $T$-invariant probability measure} \Bigr\} .
\]
The recent work~\cite{BD21} identifies a $t_* > 1$
such that $P(t)$ is real analytic on $(0, t_*)$ and for each $t \in (0, t_*)$ there is a unique
equilibrium state $\mu_t$ (i.e., $\mu_t$ achieves the above supremum) and enjoys
exponential decay of correlations on H\"older observables.  
Moreover, under the sparse recurrence condition, $\lim_{t \downarrow 0} P(t) = h$.

Yet if the billiard table has a periodic orbit with a grazing collision, then $P(t) = \infty$ for
$t < 0$ so that the pressure function signals a phase transition at $t=0$.
Since the spectral gap for the associated transfer operators in the construction of 
$\mu_t$ tends to 0 as $t$ decreases to 0 in \cite{BD21}, it is reasonable to expect 
a subexponential rate of decay of correlations at $t=0$.  The present work proves a polynomial
upper bound on the rate of decay with exponent 
expressed as a function of $h/s_0$, where $s_0$ is a parameter controlling
recurrence to singularities~\eqref{eq:sparse}.
However, the question of lower bounds for the decay rate remains open. 
 
The paper is organized as follows.  In Section~\ref{sec:setting} we define our setting and assumptions
precisely, recall some facts about dispersing billiards and state our main results
regarding decay of correlations and an almost-sure invariance principle.  In Section~\ref{sec:cylinders} we describe our main construction which consists of counting
proper returns to a reference {\em magnet rectangle}.  This is similar in spirit to the construction of a 
Young tower, but rather than estimating the measure of points making a proper return to the base at time $n$,
we control the cardinalities of distinct itineraries.
The key estimates are contained in Propositions~\ref{prop:YCb} and \ref{prop:YTb}.
In Section~\ref{sec:symbolic} we introduce a symbolic model which captures counts of
returning itineraries from Section~\ref{sec:cylinders}, and on which probabilistic results
like the Central Limit Theorem are standard.
In Section~\ref{sec:final} we relate the symbolic model to the billiard and complete
proofs of the main results.

\section{Setting and Statement of Results}
\label{sec:setting}

We begin with a precise description of the class of systems we will study.
Let $\cO_i \subset \mathbb{T}^2$ be strictly convex pairwise disjoint closed sets 
(called obstacles or scatterers)
whose boundaries are $C^3$ curves with
strictly positive curvature, $i = 1, \ldots, d$.  
The billiard flow is defined by the motion of a point particle
moving in straight lines at unit speed in the billiard table 
$Q = \mathbb{T}^2 \setminus ( \cup_{i=1}^d \cO_i )$
and undergoing specular reflections (angle of incidence equals angle of reflection) at 
collisions with the boundary $\partial Q$.  If we identify 
ingoing and outgoing collisions, then the flow is continuous on $Q \times \mathbb{S}^1$.

The discrete-time billiard map $T$ is defined as the collision map on the 
global Poincar\'e section $\partial Q$.  We adopt standard coordinates at collisions,
$x = (r, \vf)$ where $r$ denotes position on $\partial Q$ parametrized (clockwise) by arc length and
$\vf$ denotes the angle made by the post-collision velocity vector with the outward normal
to $\partial \cO_i$ at position $r$.
Then $M = \cup_{i=1}^d \mathbb{S}_i \times [-\pi/2, \pi/2]$, where $\mathbb{S}_i$ is the circle
of length $|\partial \cO_i|$, is the phase space for the billiard map $T$.

Let $\tau(x)$ denote the time between collisions at $x$ and $T(x)$ in $Q$.  
We will study tables $Q$ which satisfy the {\em finite horizon} condition: 
there exist no trajectories making only tangential collisions.
This implies in particular that 
there exists $\tau_{\max} < \infty$ such that $\tau(x) \le \tau_{\max}$ for all $x \in M$.  The
fact that the $\cO_i$ are closed, disjoint and convex implies also that there exists $\tau_{\min}>0$ such that
$\tau \ge \tau_{\min}$. We will refer to this class of billiards as 
{\em finite horizon dispersing billiards} throughout.  We do not consider tables with corner points.

%%%%%%%%%%%%%%%%%%%%%%%%%%%%%%

\subsection{Singularities, hyperbolicity and sparse recurrence}
\label{ssec:sparse}

Denote by $\cS_0 = \{(r, \varphi) \in M : \varphi = \pm \pi/2\}$ the set of tangential collisions
and  let $\cS_{\pm n} = \cup_{k=0}^n T^{\mp k} \cS_0$.
Then $\cS_{\pm n}$ is the discontinuity set of $T^{\pm n}$.
It is a standard fact (see, for example, \cite[Sect.~4.9]{CM06})
that $\cS_n$ comprises a finite collection of $C^2$ curves with compact closures in $M$.
Moreover, these curves obey the property of {\em continuation of singularities}: 
For each $n > 0$, every curve $S \subset \cS_{\pm n} \setminus \cS_0$ is part of a
monotonic and piecewise smooth curve $\tilde S \subset  \cS_{\pm n} \setminus \cS_0$
that terminates on $\cS_0$.

With this notation, the collection $\cM_n$ from~\eqref{eq:hM} is the partition of $M \setminus \cS_n$
into its maximal connected components.  Note that $A \in \cM_n$ if and only if $T^n(A) \in \cM_{-n}$, 
where $\cM_{-n}$ is the corresponding partition of $M \setminus \cS_{-n}$.
The quantity $h$ measures the exponential rate of growth in the cardinality of the domains
of continuity of $T^n$.
Indeed, 
Baladi and Demers~\cite{BD20} proved that the limit in \eqref{eq:h} exists and coincides with topological entropy
defined in other natural ways, for example by counting Bowen's separated
and spanning sets. 

The existence of a unique measure of maximal entropy is proved in \cite{BD20} under the
following condition, which can be seen as a type of sparse recurrence to singularities
relative to $h$.  For $0 < \vf_0 < \pi/2$ and $n_0 \ge 1$, let $s_0(\vf_0, n_0) \le 1$ be
the smallest number such that any orbit segment for $T$ of length $n_0$ makes at most $s_0 n_0$
collisions with $|\vf| > \vf_0$.  It is a consequence of the finite horizon condition that we may
always choose
$\vf_0$ and $n_0$ so that $s_0 < 1$.  Indeed, in a table without triple tangencies
(a generic condition) one has $s_0 \le 2/3$.
We assume:  
\begin{equation}
\label{eq:sparse}
\mbox{There exist $\vf_0 < \pi/2$ and $n_0 \ge 1$ such that $h > s_0 \log 2$.}
\end{equation}
This assumption seems to be quite mild:  we are not aware of any dispersing billiard table
for which it fails. See \cite[Section~2.4]{BD20} for a discussion of this condition and explicit verifications
for several popular models.
The factor $\log 2$ appears due to the fact that at nearly tangential collisions, one has
$|TW| \sim |W|^{1/2}$ for a local unstable manifold $W$, where $| \cdot |$ denotes 
Euclidean length of the curve.
In Section~\ref{ssec:results}, we state a conjecture due to~\cite{Balint Toth}
on the growth of complexity for finite horizon dispersing billiard tables.
We prove that under this conjecture, $s_0$ can be chosen arbitrarily small for typical tables,
thus~\eqref{eq:sparse} is typically satisfied. This allows us to strengthen the rate of decay of
correlations to super-polynomial in Corollary~\ref{cor:super}.

Despite the presence of singularities, $T$ enjoys uniform hyperbolicity in the following sense.
There exist stable and unstable cones, $\cC^s(x)$ and $\cC^u(x)$, which are strictly
contracted by the dynamics, $DT(x) \cC^u(x) \subsetneq \cC^u(Tx)$ for all $x \notin \cS_1$ and
$DT^{-1}(x) \cC^s(x) \subsetneq \cC^s(T^{-1}x)$ for all $x \notin \cS_{-1}$.
Indeed, the cones have a particularly simple (and global) definition,
\begin{equation}
    \label{eq:cU}
    \cC^u(x) = \bigl\{ (dr, d\vf) \in \mathbb{R}^2 : \cK_{\min} \le \tfrac{d\vf}{dr} \le \cK_{\max} + \tau_{\min}^{-1} \bigr\} ,
\end{equation}
where $\cK_{\min}>0$ and $\cK_{\max}<\infty$ denote the minimum and maximum curvatures of the
$\partial \cO_i$ in $Q$.  A similar formula holds for $\cC^s$ \cite[Section~4.4]{CM06}.

We call a $C^2$ curve $W \subset M$ an {\em unstable curve} if its tangent vector at each point lies in $\cC^u$.  
Stable curves are defined analogously.
The singularities $\cS_{\pm n}$ align with the stable and unstable cones in the
following sense:  For each $n >0$, each smooth component $S \subset \cS_n \setminus \cS_0$ is an unstable
curve, while each smooth component $S \subset \cS_{-n} \setminus \cS_0$ is a stable
curve~\cite[Sect.~4.9]{CM06}.
Thus $\cS_n$ is uniformly transverse to $\cC^u$ and $\cS_{-n}$ is uniformly transverse to $\cC^s$.

Define $\Lambda = 1 + 2\cK_{\min} \tau_{\min}$.  Then $\Lambda$ is the hyperbolicity
constant which governs the minimum rate of contraction and expansion in the stable and
unstable cones.  There exists $C_e > 0$ such that for all $n \ge 0$ (\cite[eq. (4.19)]{CM06}),
\begin{equation}
    \label{eq:hyp}
    \begin{aligned}
        \| DT^n(x) v \| & \ge C_e \Lambda^n \| v \| \quad \forall v \in \cC^u(x)
        , \\
        \| DT^{-n}(x) v \| & \ge C_e \Lambda^n \| v \| \quad \forall v \in \cC^s(x)
        . 
    \end{aligned}
\end{equation}

%%%%%%%%%%%%%%%%%%%%%%%%%%%%%%

\subsection{Statement of main results}
\label{ssec:results}

Throughout this section we assume that $T$ is a finite horizon dispersing billiard map as
described above and that the sparse recurrence condition \eqref{eq:sparse} holds.

For a function $u \colon M \to \mathbb{R}$ and $\gamma \in (0,1]$, define
\begin{equation}
    \label{eq:holder}
    | u |_{C^\gamma} = \sup_{x \in M} |u(x)| + \sup_{x \neq y \in M} \frac{|u(x) - u(y)|}{d(x,y)^\gamma}
    ,
\end{equation}
where $d( \cdot, \cdot)$ denotes the Riemannian distance in $M$.  We call
$u$ a H\"older observable with exponent $\gamma$ if $| u |_{C^\gamma} < \infty$. 
Let $C^\gamma(M)$ denote the set of H\"older observables with exponent $\gamma$.

Our main results are as follows.

\begin{thm}
    \label{thm:decay}
    Let $s_0 \in (0,1)$ be chosen according to \eqref{eq:sparse}.
    Suppose that $h > s_0 \log 4$ and $\ve \in \bigl( 0, \frac{h}{s_0 \log 2} - 2 \bigr)$. 
    Then for every $\gamma \in (0,1]$, all $u,v \in C^\gamma(M)$ and $n \geq 0$,
    \[
        \biggl| \int u \, v \circ T^n \, d\mu_0 - \int u \, d\mu_0 \int v \, d\mu_0 \biggr|
        \leq C_{\gamma,\eps} |u|_{C^\gamma} |v|_{C^\gamma} \, n^{- \frac{h}{ s_0 \log 2} + 2 + \eps}
        ,
    \]
    where $C_{\gamma,\eps}$ is a constant depending on $\gamma$, $\eps$
    and on the billiard $T \colon M \to M$.
\end{thm}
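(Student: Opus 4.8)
The plan is to build a Young-type tower adapted to \emph{counting itineraries} rather than estimating measures, as announced in the introduction, and to transport the polynomial decay from a symbolic model back to the billiard. First I would fix a ``magnet rectangle'' $R$: a reference set that is a (roughly) homogeneous rectangle built from stable and unstable curves of definite size, chosen so that its image under $T^n$ for suitable $n$ properly overlaps it in the unstable direction, and so that recurrence to $R$ can be controlled via the hyperbolicity estimate \eqref{eq:hyp} together with the sparse recurrence condition \eqref{eq:sparse}. The new idea here is that, in place of the SRB measure, the relevant quantity is the number of distinct combinatorial itineraries of a given length making a first proper return to $R$; the exponential growth rate of all itineraries is $h$, and one must show that the growth rate of those with a \emph{long} first return is strictly smaller. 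This is exactly where $s_0\log 2$ enters: an orbit segment of length $n$ that fails to return has, by \eqref{eq:sparse}, at most $s_0 n$ collisions with $|\varphi|>\varphi_0$, and at each near-tangential collision the unstable-curve expansion is only by a square root, costing a factor of (at most) $2$ per such collision in the complexity count; hence long non-returning itineraries number at most $\sim 2^{s_0 n}$ times a subexponential correction, which must be compared against the total $e^{hn}$. The hypotheses $h>s_0\log 4$ and $\varepsilon<\frac{h}{s_0\log 2}-2$ are precisely the thresholds that make $2^{s_0 n}$ summable against the return-time weighting after one passes to the induced system; this is the content of Propositions~\ref{prop:YCb} and \ref{prop:YTb}, which I would invoke for the key bounds.

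Next I would set up the symbolic model of Section~\ref{sec:symbolic}: a countable-state subshift (or a tower over a full shift on the return itineraries) whose transfer operator acts on a space of Hölder/Lipschitz sequences, with the $n$-th level having cardinality controlled by the counting estimates above. Because the return-time tail is polynomial — with exponent governed by $h/(s_0\log 2)$ — the operator-renewal machinery (Sarig--Gou\"ezel style) gives, on the symbolic side, a decay of correlations of order $n^{-(h/(s_0\log 2)-1)+\varepsilon}$ for the induced observables, and after accounting for the loss of one power coming from the comparison of the induced and original time scales one lands on the exponent $-\frac{h}{s_0\log 2}+2+\varepsilon$ claimed in the theorem. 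The Central Limit Theorem and the almost-sure invariance principle on this symbolic model are standard once the polynomial tail is established, which is why the paper routes everything through it.

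The final step is to relate the symbolic model back to the billiard map $(M,T,\mu_0)$: one must show that $\mu_0$ lifts to the tower (equivalently, that the measure of maximal entropy is ``seen'' by the magnet-rectangle construction, using that $\mu_0$ is $T$-adapted and positive on open sets from \cite{BD20}), that Hölder observables $u,v\in C^\gamma(M)$ pull back to observables with summable variation on the symbolic space — here one uses the standard billiard distortion bounds along unstable manifolds and the uniform transversality of the stable/unstable foliations — and that the correlation integral $\int u\, v\circ T^n\, d\mu_0 - \int u\, d\mu_0 \int v\, d\mu_0$ is, up to a constant depending only on $\gamma,\varepsilon$ and the table, bounded by the symbolic correlation at comparable time. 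The main obstacle, and the genuinely billiard-specific difficulty, is the control of complexity near tangential collisions: one needs the growth-of-complexity bound to interact correctly with the homogeneity-layer decomposition so that the ``one-step expansion $\le 2$'' heuristic becomes a rigorous bound on $\#\mathcal M_n$ restricted to non-returning cylinders, uniformly in $n$ — this is precisely the estimate that forces the explicit dependence of the decay exponent on $h/s_0$ and is where I would expect most of the technical work (the bounded-complexity conjecture discussion in Section~\ref{ssec:results} is exactly the regime where $s_0$, and hence this obstacle, can be made negligible, yielding Corollary~\ref{cor:super}).
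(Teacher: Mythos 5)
Your architecture matches the paper's: count itineraries returning to a magnet rectangle (Propositions~\ref{prop:YCb} and \ref{prop:YTb}), feed the counts into a countable-state Markov shift, deduce polynomial decay from a polynomial return-time tail, and transfer back to the billiard along a Lipschitz semiconjugacy. Your heuristic for why $s_0\log 2$ enters is also the correct one. However, you leave one genuinely non-trivial step implicit, and in doing so gloss over where the structural difficulty lies. To invoke the symbolic-model theorem (Theorem~\ref{thm:Delta}) one must first establish that the counts $r_n=\#\cR_n$ are \emph{exactly} normalized, namely $\sum_{n\geq 1} r_n e^{-hn}=1$. This is Proposition~\ref{prop:works}, and it does not follow from the tail bound of Proposition~\ref{prop:YTb} alone: it is extracted from the \emph{two-sided} bound $C^{-1}e^{hn}\leq \#\cA_n\leq Ce^{hn}$ of Proposition~\ref{prop:YCb} via a linear-algebra dichotomy for the renewal operator (Lemma~\ref{lem:Rp}). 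You cite Propositions~\ref{prop:YCb} and~\ref{prop:YTb} as ``the key bounds,'' but never indicate what the lower bound in~\ref{prop:YCb} is for; without this normalization the Markov measure $\mu_\Delta$ need not exist at entropy $h$, and the entire identification with $\mu_0$ collapses.

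Relatedly, your step ``show that $\mu_0$ lifts to the tower'' runs the argument in the wrong direction. The paper does not lift $\mu_0$; it constructs an explicit shift-invariant Markov measure $\mu_\Delta$ on $\Delta$ (using the normalization above to compute $h_{\mu_\Delta}=\log\lambda=h$), shows the coding $\pi\colon\Delta\to M$ is countable-to-one and hence entropy-preserving (Lemma~\ref{lem:buz99}, via Buzzi), and concludes $\pi_*\mu_\Delta=\mu_0$ from \emph{uniqueness} of the MME in \cite{BD20}. This downhill argument is what actually closes the loop; a lifting argument would require characterizing $\mu_0$ on the tower directly, which the paper avoids. Similarly, the Hölder-to-Hölder transfer is handled not by billiard distortion bounds but simply by Lipschitz continuity of $\pi$ (Lemma~\ref{lem:hyp}), which uses only uniform hyperbolicity~\eqref{eq:hyp}.
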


An important quantity used to control the effect of cutting due to the singularity sets $\cS_n$ 
is the {\em complexity}. 
For $n \in \mathbb{Z}$, let $K_n$ denote the maximum number of curves in $\cS_n$ that intersect at one point.  
For $n>0$, since $\cS_n$ is uniformly transverse to the unstable cone, if $W$ is a sufficiently short unstable curve,
then there can be at most $K_n +1$ connected components of $W \setminus \cS_n$.  Thus the rate of
growth of $K_n$ is a measure of how unstable curves are fragmented due to the presence of singularities.
This, in turn, informs the global rate of growth of $\#\cM_n$ (see the verification of (F2) in Section~\ref{sub:verify} below or
\cite[Proposition~5.5]{BD20}).

It is a classical
result due to Bunimovich (see, for example \cite[Lemma~5.2]{C01}) that for finite horizon dispersing billiards,
\begin{equation}
\label{eq:linear}
\mbox{There exists $K>0$ such that $K_n \le K|n|$ for all $n \in \mathbb{Z}$.}
\end{equation}
The fact that the complexity bound is subexponential is essential to the uniform bounds on $\#\cM_n$ obtained in \cite{BD20} and
is essential to the present work.  Yet, it is conjectured that for generic
dispersing billiard tables, the complexity bound is even stronger.
Indeed, for a given billiard table, we say the {\em sequence of complexities is bounded} if there exists $K>0$ such that
$K_n \le K$ for all $n \in \mathbb{Z}$.

\begin{conjecture}{\em (\cite[Conjecture~3.3]{Balint Toth})}
    \label{conj}
    The sequence of complexities $K_n$ is bounded for typical
    finite horizon dispersing billiard configurations.
\end{conjecture}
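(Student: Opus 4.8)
The plan is to show that an unbounded complexity sequence forces a rigid coincidence between billiard trajectories and tangential collisions, and that such coincidences are destroyed by arbitrarily small perturbations of the scatterers; so the set of configurations with unbounded complexity is meager (and of measure zero in every generic finite-parameter family), which is what "typical'' should mean here.

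\textbf{Setup and reduction.} Fix a number $d$ of scatterers and a family of perturbations, letting each $\partial\cO_i$ vary within a small $C^3$-ball with parameter $\omega$ ranging over the positions of the scatterers together with finitely many shape coefficients; the finite-horizon and no-corner conditions are open, hence preserved. The first task is to reformulate $K_n$ geometrically: a point $p$ meets $K_n$ curves of $\cS_n = \cup_{k=0}^n T^{-k}\cS_0$ precisely when one can produce about that many one-sided orbit segments issuing from $p$, one per adjacent cell of $\cM_n$, whose lengths $j_1 < j_2 < \cdots$ end in a grazing collision, with the longer segments accumulating onto $p$ through the $|W|^{1/2}$ stretching near the earlier grazings. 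Consequently, if $K_n \to \infty$, then passing to a subsequence and using compactness of $M$ and of the space of finite "grazing itineraries'' one extracts a limiting trajectory through some point $p_*$ that grazes at its last collision \emph{and} returns arbitrarily close to a grazing collision at arbitrarily late times.

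\textbf{The degeneracy condition.} The coincidences to exclude are trajectories tangent to a scatterer at two distinct collision times, with the second tangency "forced'' by the first. A Poincar\'e-recurrence/compactness argument should reduce the "arbitrarily late'' accumulation to a single closed structure: either a periodic trajectory containing a grazing collision (the known obstruction --- such an orbit forces linear complexity growth, cf.\ the discussion after \eqref{eq:sparse} and \cite{BD20}) or a non-periodic trajectory whose closure contains a grazing collision onto which its own orbit accumulates. For each finite admissible word $w$ in the symbolic dynamics and each pair of tangency arcs, the statement "there is a trajectory coded by $w$ that is tangent at its first and last symbol'' is a real-analytic equation $F_w(\omega)=0$ on the parameter space, and the point to establish is that each $F_w$ is \emph{not} identically zero, so that $\{F_w=0\}$ is closed with empty interior (and measure zero in generic families). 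Non-triviality of a single $F_w$ is a first-order computation: moving one scatterer normal to the relevant trajectory changes the incidence angle at first order while keeping the combinatorics fixed, and so moves that collision off tangency at a rate comparable to the perturbation size.

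\textbf{Exclusion, and the main obstacle.} Taking the union of $\{F_w=0\}$ over all $w$ gives a meager, measure-zero set of configurations; outside it no exact tangency coincidence occurs. The genuinely hard step is to upgrade this to a \emph{uniform} bound $K_n \le K(\omega)$ valid for all $n$ simultaneously. For fixed $n$, $K_n$ is controlled by near-coincidences along words of length $\le n$, and a near-coincidence "at scale $\delta$'' along an arbitrarily long word limits, as $\delta\to 0$, onto an exact coincidence that has been excluded; combined with the a priori bound $K_n \le Kn$ from \eqref{eq:linear} and compactness over finitely many scales, this \emph{morally} yields a bound. But making it quantitative requires controlling all scales and all word-lengths at once: the $|W|^{1/2}$ singularity makes neighborhoods of tangencies expand, so many high-generation singularity curves get squeezed near each tangency, and one must rule out a cascade of ever-finer tangency near-coincidences surviving on a residual set. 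This is the classical parameter-exclusion difficulty (in the spirit of Benedicks--Carleson): one needs either a large-deviation estimate showing that the measure of parameters admitting a scale-$\delta$ coincidence decays fast enough to beat the growth in the number of relevant words, or a soft argument that the $\limsup$ bad set is still meager. This is the step I expect to be the true obstruction, and it is why the statement remains a conjecture.
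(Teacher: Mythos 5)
This statement is a \emph{conjecture} in the paper, imported verbatim from B\'alint--T\'oth; the paper does not prove it, and in fact nobody has. Everything downstream of Conjecture~\ref{conj} in the paper (Corollary~\ref{cor:super}) is stated conditionally on it. So there is no ``paper's proof'' to compare against, and a proof proposal for this item should be judged purely on whether it actually closes the conjecture.

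Your proposal does not, and you say so yourself in the final paragraph, which I think is the correct assessment. The architecture you sketch --- recast $K_n\to\infty$ as an accumulation of grazing itineraries, express exact tangency coincidences as analytic conditions $F_w(\omega)=0$ on parameter space, argue non-degeneracy of each $F_w$ by a first-order perturbation, and then exclude $\cup_w\{F_w=0\}$ as meager/null --- is a plausible and standard-shaped attack. But the step you flag as the obstruction is genuinely the whole problem: removing the countable union of exact coincidences gives you, for a typical $\omega$, only that no \emph{individual} finite word produces an exact tangency alignment; it does not give a uniform bound on $K_n$. Because the derivative blows up like $|W|^{-1/2}$ near tangencies, singularity curves of high generation concentrate near low-generation tangency curves, and one must rule out arbitrarily fine near-coincidences accumulating along an infinite sequence of words. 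This is a $\limsup$-over-all-scales exclusion, not a countable union of closed nowhere-dense sets, so Baire category alone does not dispatch it, and the quantitative (Benedicks--Carleson-style) version requires tail estimates on the parameter measure of near-coincidences that are not available in this setting. There is also a secondary soft spot earlier: the compactness extraction of a ``limiting trajectory'' with a grazing collision from $K_n\to\infty$ needs care, since the grazing set $\cS_0$ is where the map is only continuous, not smooth, and the limiting object need not be an orbit of the billiard map in the usual sense. In short, your outline is honest and identifies the right difficulty, but it is not a proof, and since the paper itself only cites this as an open conjecture, there is nothing in the source to reconcile it with.
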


Typical here can be taken in either an algebraic\footnote{Consider a family of scatterers on 
$\mathbb{T}^2$ whose boundaries are given by algebraic equations in some parameters; 
then the set of non-typical tables is conjectured to lie in a lower-dimensional subspace.}
or topological\footnote{The set of tables in the $C^3$ topology that has bounded complexity
is conjectured to be a countable intersection of open and dense sets.} sense.
See~\cite[Section~3.3]{Balint Toth} for a more extended discussion of this conjecture, 
in 2 and higher dimensions.  Notice that the conjecture does not hold if the table has 
a periodic orbit with a grazing collision; however~\eqref{eq:sparse} can still hold in this case,
and does in all known examples (see~\cite[Section~2.4]{BD20}).
We prove the following corollary in Proposition~\ref{prop:s_0}.

\begin{cor}
\label{cor:super}
Assume Conjecture~\ref{conj} holds in some sense of typicality.  
Then for a typical dispersing billiard table, \eqref{eq:sparse} holds with $s_0$ arbitrarily small and
the rate of decay of correlations for $\mu_0$ is super-polynomial.
\end{cor}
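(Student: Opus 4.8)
The plan is to show that Conjecture~\ref{conj} allows one to make the parameter $s_0$ in \eqref{eq:sparse} as small as desired, and then feed this into Theorem~\ref{thm:decay}, whose exponent $-\frac{h}{s_0\log 2}+2+\eps$ tends to $-\infty$ as $s_0\to 0$. First I would recall that $s_0(\vf_0,n_0)$ is defined as the smallest number such that every orbit segment of length $n_0$ makes at most $s_0 n_0$ collisions with $|\vf|>\vf_0$; equivalently, among any $n_0$ consecutive collisions at most $s_0 n_0$ are ``non-grazing'' in the sense of being bounded away from $\cS_0$. The key observation is that collisions with $|\vf|>\vf_0$ correspond, roughly, to orbit pieces that stay away from tangencies, and the number of distinct itineraries of such pieces is controlled by the complexity sequence $K_n$. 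Under bounded complexity, $K_n\le K$ for all $n$, so the relevant counting is essentially free: a long stretch of consecutive non-grazing collisions cannot occur, because it would force a definite lower bound on expansion together with a definite bound on the number of singularity curves met, which is incompatible with the orbit returning near $\cS_0$ only rarely. More precisely, I would argue that bounded complexity forces every sufficiently long orbit segment to contain a grazing (nearly tangential) collision: if an orbit avoided $\{|\vf|\le\vf_0\}$ for $n_0$ steps, the unstable curve through it would expand by at least $C_e\Lambda^{n_0}$ while crossing at most $K$ singularity lines in total, and for $n_0$ large this contradicts the bounded-horizon geometry (a uniformly expanding curve crossing only boundedly many singularities must eventually have length exceeding $\tau_{\max}$ or the diameter of a homogeneity strip). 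Hence for any target $\delta>0$ one can choose $\vf_0$ close to $\pi/2$ and $n_0$ large so that $s_0(\vf_0,n_0)\le\delta$.

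The cleanest way to phrase this is probably: fix $\vf_0$; the collisions with $|\vf|>\vf_0$ are the ones on which $DT$ has bounded norm, say $\|DT\|\le C(\vf_0)$. Along any orbit segment of length $n_0$, group the collisions into maximal runs of consecutive non-grazing ones. On such a run the dynamics is a genuine hyperbolic map with bounded distortion and bounded derivative, so the number of elements of $\cM_{n_0}$ meeting a fixed small stable curve is bounded by a quantity depending on how many times the singularity set $\cS_{n_0}$ (restricted to the non-grazing region) is crossed; bounded complexity $K_n\le K$ bounds this crossing count uniformly in $n_0$. Meanwhile the entropy $h$ — which by \eqref{eq:hM} is the exponential growth rate of $\#\cM_n$ — must be generated, and uniform hyperbolicity with bounded derivative on non-grazing runs forces those runs to be short: a run of length $\ell$ contributes at most a factor depending on $K$ (not exponential in $\ell$) to $\#\cM_n$, so to sustain exponential growth the grazing collisions must occur with positive frequency; quantitatively, runs of non-grazing collisions have length $O(1)$ depending on $\vf_0$, giving $s_0 n_0 = O(n_0/n_0)\cdot(\text{run length})$ — actually $s_0$ bounded by (max run length)/(min spacing of grazing collisions), which can be driven below any $\delta$ by taking $\vf_0\to\pi/2$ since grazing collisions become more frequent as the threshold relaxes. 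I would then simply cite that \cite[Section~3.3]{Balint Toth} or the complexity analysis already invoked for \eqref{eq:linear} provides the needed uniform crossing bounds in the bounded-complexity regime.

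Granting $s_0\le\delta$, the conclusion is immediate: since $h>0$ is fixed, for $\delta$ small enough $h>s_0\log 4$ holds, so Theorem~\ref{thm:decay} applies with $\eps$ allowed up to $\frac{h}{s_0\log 2}-2$, yielding decay of correlations at rate $n^{-\frac{h}{s_0\log 2}+2+\eps}$. As $\delta\to 0$ the exponent $\frac{h}{s_0\log 2}-2-\eps$ exceeds any prescribed $N$, so for every $N$ there is a choice of $(\vf_0,n_0)$ (hence a constant $C_{\gamma,N}$) with correlations bounded by $C_{\gamma,N}|u|_{C^\gamma}|v|_{C^\gamma}n^{-N}$; this is precisely super-polynomial decay. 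The same choice makes \eqref{eq:sparse} hold with $s_0$ arbitrarily small, proving the first assertion of the corollary.

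\medskip

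\emph{Main obstacle.} The delicate point is the quantitative step translating ``$K_n$ bounded'' into ``$s_0$ can be made arbitrarily small'': one must show that runs of consecutive collisions bounded away from $\cS_0$ cannot be long when the total number of singularity curves crossed stays bounded. This is a geometric/combinatorial argument about how unstable curves grow under $T$ restricted to the non-grazing region — essentially that uniform expansion plus bounded complexity is incompatible with positive topological entropy being generated away from tangencies — and it requires carefully relating $h$ (the growth rate of $\#\cM_n$) to the local crossing counts $K_n$. I expect this to be the technical heart of the proof, with the application of Theorem~\ref{thm:decay} being routine once $s_0$ is controlled.
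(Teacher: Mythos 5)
Your high-level plan is right, and the final step is correct: once one knows that $s_0$ can be made arbitrarily small while $h$ is fixed, Theorem~\ref{thm:decay} applies (for $s_0$ small enough, $h>s_0\log 4$ holds automatically) and the exponent $\frac{h}{s_0\log 2}-2-\eps$ can be pushed above any prescribed $N$, giving super-polynomial decay. However, the technical core of your argument — the claim that bounded complexity makes $s_0$ small — rests on a misreading of the definition of $s_0$ that renders the argument incoherent.

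The condition $|\vf|>\vf_0$ with $\vf_0$ close to $\pi/2$ picks out the \emph{near-tangential (grazing)} collisions, i.e.\ those close to $\cS_0=\{|\vf|=\pm\pi/2\}$. You repeatedly assert the opposite: that collisions with $|\vf|>\vf_0$ ``stay away from tangencies,'' that they are ``non-grazing,'' and that on them ``$DT$ has bounded norm.'' All three statements are backwards. In particular $\|DT\|$ \emph{blows up} as $|\vf|\to\pi/2$ (this is precisely the source of the $\log 2$ factor in~\eqref{eq:sparse}); it does not stay bounded there. Because of this sign error, the argument you sketch — that uniform expansion plus bounded complexity rules out long runs of such collisions — is aimed at the wrong set and does not establish what is needed. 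Even if one mechanically swapped ``grazing'' and ``non-grazing'' throughout, the proposed dynamical argument (expansion rates forcing a contradiction with the generation of entropy) is not what makes the proof work, and the appeal to a citation for ``the needed uniform crossing bounds'' leaves the essential step unproved.

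The paper's actual argument (Proposition~\ref{prop:s_0}) is a static compactness/covering argument requiring no expansion estimates at all. Fix $n_0$ with $K/n_0<\ve_0$. The curves of $\cS_{n_0}$ have finitely many intersection points $z_i$, and bounded complexity says at most $K$ of these curves pass through each $z_i$; equivalently, the orbit of $z_i$ hits $\cS_0$ at most $K$ times in $n_0$ steps. By continuity of $T^{n_0}$ on each sector around $z_i$, a sufficiently small neighborhood of $z_i$ enjoys the same bound up to an $\ve$-neighborhood of $\cS_0$. Points on a single curve of $\cS_{n_0}$ away from the $z_i$ have orbits hitting (near) $\cS_0$ at most once in $n_0$ steps, and interior points of cells of $\cM_{n_0}$ hit it zero times. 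Choosing $\vf_0$ close enough to $\pi/2$ so that $\{|\vf|>\vf_0\}$ fits inside the relevant $\ve$-neighborhood of $\cS_0$ gives $s_0(\vf_0,n_0)\le K/n_0<\ve_0$. You should base the ``$s_0$ small'' step on this covering argument rather than on expansion and entropy heuristics.
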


\begin{defn}
    \label{def:ASIP}
    We say that a real-valued random process $S_n$ satisfies
    the \emph{almost sure invariance principle} (ASIP) with rate $o(n^p)$ if
    without changing its law, $\{S_n\}$ can be redefined on a probability space which
    supports a Brownian motion $W_t$ such that
    \[
        S_n = W_n + o(n^p)
        \qquad \text{almost surely.}
    \]
\end{defn}

\begin{rmk}
    Useful corollaries of the ASIP with rate $o(n^p)$ with $p < 1/2$
    include the Central Limit Theorem, the Law of the Iterated Logarithm
    and their functional versions, see for example Philipp and Stout~\cite{PS75}.
\end{rmk}

\begin{thm}
    \label{thm:ASIP}
    Let $v \colon M \to \bR$ be H\"older with $\int v \, d\mu_0 = 0$ and let
    $S_n = \sum_{k=0}^{n-1} v \circ T^{k}$. Consider $S_n$ as a random process on
    the probability space $(M, \mu_0)$.
    If $h > s_0 \log 8$, then for each $p > \bigl(\frac{h}{s_0 \log 2} - 1 \bigr)^{-1}$
    the process $S_n$ satisfies the ASIP with rate $o(n^p)$.
\end{thm}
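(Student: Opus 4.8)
The plan is to push the problem to the symbolic model of Section~\ref{sec:symbolic}, where it reduces to a known almost sure invariance principle, and then transport the conclusion back along the coding map of Section~\ref{sec:final}. Write $a = \tfrac{h}{s_0\log 2}$, so that the hypothesis $h > s_0\log 8$ reads $a > 3$. Let $(\Sigma,\sigma,\nu)$ denote the symbolic model together with its $\sigma$-invariant probability measure, and let $\pi\colon\Sigma\to M$ be the measure-preserving semiconjugacy of Section~\ref{sec:final}, so that $\pi\circ\sigma = T\circ\pi$ and $\pi_*\nu = \mu_0$. Since $\pi$ intertwines the dynamics, $v\circ T^k\circ\pi = \hat v\circ\sigma^k$ with $\hat v = v\circ\pi$, and therefore the process $S_n$ on $(M,\mu_0)$ has the same distribution as $\hat S_n = \sum_{k=0}^{n-1}\hat v\circ\sigma^k$ on $(\Sigma,\nu)$. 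As the ASIP is a statement about the joint law of the process, it suffices to establish it for $\hat S_n$.

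I would then verify that $\hat v$ is a legitimate observable for the symbolic model, i.e. that it is dynamically H\"older on $\Sigma$ (variations over $n$-cylinders decaying exponentially in $n$). This is where uniform hyperbolicity \eqref{eq:hyp} enters: together with the standard distortion bounds for dispersing billiards along homogeneous stable and unstable manifolds, it forces the dynamical partition elements coded by matching $n$ symbols to have diameter $O(\Lambda^{-cn})$ in $M$ for some $c>0$, so that the H\"older continuity of $v$ transfers to exponentially decaying variations of $\hat v$. If the symbolic model is one-sided, a further cohomological reduction (Sinai's trick) replaces $\hat v$ by a function of the future coordinates only, changing $\hat S_n$ by a uniformly bounded amount, which is irrelevant for the ASIP. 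The content of Propositions~\ref{prop:YCb} and~\ref{prop:YTb} is that $(\Sigma,\sigma,\nu)$ carries the Young-tower structure built by counting returning itineraries: a reference base $Y$ with first-return map $F = \sigma^\varphi$ whose return time satisfies $\nu_Y(\varphi > n) = O(n^{-(a-1)})$, consistent with the decay rate of Theorem~\ref{thm:decay}. The hypothesis $a>3$ is precisely $a-1>2$, i.e. $\varphi$ has more than two moments; the induced observable $V = \sum_{j=0}^{\varphi-1}\hat v\circ\sigma^j$ then satisfies $|V|\le |v|_{C^\gamma}\,\varphi$, so $V\in L^q(\nu_Y)$ for every $q<a-1$, and $V$ is dynamically H\"older for the Gibbs--Markov map $(Y,F,\nu_Y)$.

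With these ingredients, the ASIP for $\hat S_n$ follows from the existing theory for systems modeled by a Young tower with polynomially decaying return times: one proves the ASIP for the induced sums $\sum_{k<N}V\circ F^k$ over the Gibbs--Markov base and lifts it to the tower, the rate degrading according to the $L^{\,a-1-\eps}$-integrability of $\varphi$; the outcome is the ASIP for $\hat S_n$ with rate $o(n^p)$ for every $p > \tfrac{1}{a-1} = \bigl(\tfrac{h}{s_0\log 2}-1\bigr)^{-1}$. The degenerate case $\sigma^2=0$, which happens exactly when $v$ is an $L^\infty$ coboundary so that $S_n$ is bounded, is subsumed with $W\equiv 0$. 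Transporting the statement along $\pi$ gives the theorem. I expect the main point requiring care to be the verification that $\hat v = v\circ\pi$ meets the regularity hypotheses of the symbolic model: the coding $\pi$ was constructed in order to compute the topological entropy rather than to serve as a Young tower adapted to $\mu_0$, so one must check directly that the coded partition elements are dynamically contracted and that $\pi_*\nu = \mu_0$. Once this, and the polynomial tail of $\varphi$ furnished by Propositions~\ref{prop:YCb} and~\ref{prop:YTb}, are in hand, the probabilistic part is a routine application of the cited ASIP machinery.
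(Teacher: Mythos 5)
Your proposal follows essentially the same route as the paper: reduce to the symbolic model $(\Delta,\sigma,\mu_\Delta)$ via the semiconjugacy $\pi$ (using $\pi_*\mu_\Delta = \mu_0$, established in the paper by an entropy/uniqueness argument), note that $\hat v = v\circ\pi$ is H\"older because $\pi$ is Lipschitz, perform the cohomological reduction to a future-only observable, and invoke the ASIP for Young towers with polynomial return-time tails $O(n^{-(\alpha-1)})$ furnished by Propositions~\ref{prop:YCb}--\ref{prop:YTb} (with $\alpha$ arbitrarily close to $h/(s_0\log 2)$). The paper's proof is exactly this, with Theorem~\ref{thm:Delta}(c) citing \cite{CDKM20} for the rate and Lemmas~\ref{lem:buz99}--\ref{lem:onesided} handling the transfer and regularity details you correctly flagged as the points requiring care.
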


%%%%%%%%%%%%%%%%%%%%%%%%%%%

\subsection{Discussion of sparse recurrence conditions in Theorems~\ref{thm:decay} and \ref{thm:ASIP}}

As mentioned above, condition \eqref{eq:sparse} is not known to fail for any 
dispersing billiard table.  In this section, we briefly discuss the slightly stronger conditions
assumed in Theorems~\ref{thm:decay} and \ref{thm:ASIP}. 

Assuming Conjecture~\ref{conj} and using Corollary~\ref{cor:super},
for typical billiard tables, $s_0$ can be chosen
arbitrarily small so that the conditions $h > s_0 \log 4$ and $h > s_0 \log 8$ are generically satisfied.
Yet there are tables with grazing periodic orbits for which $s_0$ has a positive minimum value.
For such tables, numerical studies indicate that these
assumptions are indeed satisfied in some popular models of dispersing billiards.  We mention 
two such studies, both of which are described in \cite[Section~2.4]{BD20}.

While both of these studies approximate the entropy of $\musrb$ by approximating
the positive Lyapunov exponent $\chi_{\srb}^+$ of the system, we will use this as a proxy for verifying our conditions since according to \cite[Theorem~2.4]{BD20}
\[
    h > h_{\musrb} = \chi_{\srb}^+
    .
\]  
Thus, for example, verifying $\chi_{\srb}^+ > s_0 \log 8$ will imply $h > s_0 \log 8$.

\begin{figure}[ht]
\begin{tikzpicture}[x=8mm,y=8mm]

\filldraw[fill=black!20!white, draw=black]  (5.6,0) arc (0:90:1.6);
\filldraw[fill=black!20!white, draw=black!20!white]  (5.6,0) -- (4,1.6) -- (4,0) -- cycle;
\filldraw[fill=black!20!white, draw=black]  (8,1.6) arc (90:180:1.6);
\filldraw[fill=black!20!white, draw=black!20!white]  (8,1.6) -- (6.4,0) -- (8,0) -- cycle;

\filldraw[fill=black!20!white, draw=black]  (6.4,4) arc (180:270:1.6);
\filldraw[fill=black!20!white, draw=black!20!white]  (6.4,4) -- (8,2.4) -- (8,4) -- cycle;

\filldraw[fill=black!20!white, draw=black]  (4,2.4) arc (270:360:1.6);
\filldraw[fill=black!20!white, draw=black!20!white]  (4,2.4) -- (5.6,4) -- (4,4) -- cycle;

\draw[dashed] (4,0) rectangle (8,4) ;

\filldraw[fill=black!20!white, draw=black]  (6,2) circle (.9);
\draw (6,2) -- (6.9,2);
\node at (6.4,1.8){\small $r$};

\draw (4,0) -- (5.13, 1.13);
\node at (4.4, 1) {\small$R$};

\node at (6,-.7){\small$(a)$};

 \filldraw[fill=black!20!white, draw=black] (13.7,.7) circle (.6);
\filldraw[fill=black!20!white, draw=black] (15.1,.7) circle (.6);
\filldraw[fill=black!20!white, draw=black] (16.5,.7) circle (.6);
\filldraw[fill=black!20!white, draw=black] (13,1.9) circle (.6);
\filldraw[fill=black!20!white, draw=black] (14.4,1.9) circle (.6);
 \filldraw[fill=black!20!white, draw=black] (15.8,1.9) circle (.6);
  \filldraw[fill=black!20!white, draw=black] (17.2,1.9) circle (.6); 
\filldraw[fill=black!20!white, draw=black] (13.7,3.1) circle (.6);
\filldraw[fill=black!20!white, draw=black] (15.1,3.1) circle (.6);
\filldraw[fill=black!20!white, draw=black] (16.5,3.1) circle (.6);

\draw[dashed] (13.7,3.1) -- (14.4,1.9) -- (15.8,1.9) -- (15.1,3.1) -- cycle;

\draw[decoration={brace,raise=2.5pt},decorate] %[decoration={brace,mirror,raise=2 pt},decorate]
  (13.7,3.1) -- node[above=4pt] {$d$} (15.1,3.1);

\node at (15.1,-.7){\small$(b)$};

 \end{tikzpicture}
\caption{(a) Sinai billiard on a square lattice with scatterers of radii $r < R$ from \cite{garrido}. 
(b) Sinai billiard on a triangular lattice with angle $\pi/3$ from \cite{baras}, scatterers of radius 1 and distance $d$ between centers of adjacent scatterers. 
The boundary of a single cell is indicated by dashed lines in both tables.}
\label{fig:tables}
\end{figure}
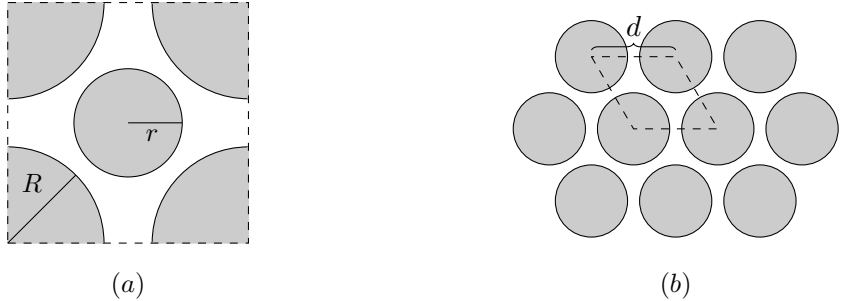

In \cite{garrido}, Garrido studied a family of dispersing billiards formed by two circular
scatterers of radii $r < R$ on a square lattice with side length 1 (see Figure~\ref{fig:tables}(a)).
As observed in \cite{BD20}, for such tables satisfying the finite horizon condition,
every double tangency is followed by at least two nontangential collisions, so we may
choose $\vf_0$ and $n_0$ so that $s_0 \le 1/2$.
Setting $R = 0.4$, Garrido
computed values of $\chi_{\srb}^+$ for $r$ ranging from $r = 0.1$ (when $\tau_{\min}=0$)
to $r = \frac{\sqrt{2}}{2} - 0.4 \approx 0.3071$ (when $\tau_{\max}=\infty$).  All recorded values of
$\chi_{\srb}^+$ are greater than $0.82 > \frac{1}{2} \log 4$ so that the hypothesis of
Theorem~\ref{thm:decay} are numerically satisfied for all such tables.  The values of 
$\chi_{\srb}^+$ remain above $1.039 \approx \frac 12 \log 8$ from $r = 0.1$ until 
about $r = 0.28$, which is very close to the infinite horizon threshold.  So the hypothesis
of Theorem~\ref{thm:ASIP} is numerically satisfied for all but a small segment of such tables.

We remark that choosing $R = 0.4$ and $r = \frac 12 - \frac{0.4}{\sqrt{2}}$ produces a
table with a period 8 orbit making 4 grazing collisions with the central scatterer of radius $r$
and 4 collisions with angle $\vf = \pi/4$ with the scatterer of radius $R$.  For such a
table, $s_0 = 1/2$ and $\chi_{\srb}^+ \approx 1.3 > \frac 12 \log 8$. 

In \cite{baras}, Baras and Gaspard studied a class of dispersing billiards formed by a single 
circular scatterer of radius 1 in a triangular lattice with distance $d$ between the centers of 
adjacent scatterers (Figure~\ref{fig:tables}(b)).  The distance varied between $d=2$ (when $\tau_{\min}=0$)
and $d= 4/\sqrt{3}$ (when $\tau_{\max} = \infty$).  As with the previous class of tables, one can
choose $s_0 \le 1/2$.  All tables tested by Baras and Gaspard above $d = 2.01$
satisfied $\chi_{\srb}^+ > \frac 12 \log 4$.  The table corresponding to $d = 2.01$ fails this condition.
For the condition $\chi_{\srb}^+ > \frac 12 \log 8$, the tables tested with $d > 2.15$ satisfy
this condition.

In summary, the majority of tables tested numerically in \cite{baras, garrido} satisfy the
strongest condition $\chi_{\srb}^+ > s_0 \log 8$ and nearly all of them satisfy
$\chi_{\srb}^+ > s_0 \log 4$.  We conclude by remarking that even when this condition fails
numerically, it does not imply that the hypotheses of Theorems~\ref{thm:decay} and
\ref{thm:ASIP} fail since in fact, $h > \chi_{\srb}^+$.
To estimate $h$ directly is an interesting project in its own right and outside the
scope of this paper.

%%%%%%%%%%%%%%%%%%%%%%%%%%%

\section{Billiard coding}
\label{sec:cylinders}

In this section we build a collection of \emph{cylinders} (or symbolic itineraries)
for the map $T$ which we later use to construct a semiconjugacy with a symbolic model.
We do this by building a type of return scheme or Young tower based on iterates of
a reference local unstable manifold which can be thickened into a rectangle.
The main results are Propositions~\ref{prop:YCb} and~\ref{prop:YTb}.

From \cite[Theorem~4.66]{CM06} it follows that Lebesgue almost every point in $M$ has a
stable and unstable manifold of positive length (this also holds for 
the measure of maximal entropy $\mu_0$ \cite[Corollary~7.4]{BD20}).
Moreover, \cite[Proposition~4.29 and Corollary~4.61]{CM06} prove that there exists $C_u >0$ such that every
local unstable manifold is $C^{1+\text{Lip}}$ with Lipschitz constant bounded by $C_u$.

%Recall the stable and unstable cones $\cC^s$ and $\cC^u$ defined in Section~\ref{ssec:sparse}.

\begin{defn}
    We call connected pieces of stable/unstable manifolds of positive length
    \emph{stable/unstable leaves.}\footnote{Global stable and unstable manifolds may be disconnected for billiards.
        Then unstable leaf is simply shorthand for local unstable manifold.}
    A leaf may or may not include its endpoints.  We use the term maximal stable/unstable leaf to refer to a leaf
    whose two endpoints belong to the set $\cup_{n \ge 0} \cS_n$ or $\cup_{n \le 0} \cS_n$, respectively.
\end{defn}

\begin{defn}
    A stable leaf $W$ is \emph{preperiodic} if there is a finite set of
    stable leaves $\{W_k\}$ such that $T^n(W) \subset \cup_k W_k$ for each $n \geq 0$.
    The minimal cardinality of $\{W_k\}$ is the \emph{preperiod} of $W$.
    %Similarly, an unstable leaf $W$ is \emph{periodic} if there is a finite set of
    %unstable leaves $\{W_k\}$ such that $T^{-n}(W) \subset \cup_k W_k$ for each $n \geq 0$.
\end{defn}

For an unstable leaf $W$, its image $T(W)$ is a union of unstable leaves.
For $n \ge 0$, let $\cG_n(W)$ denote the collection of maximal connected components of $T^n(W)$,
excluding isolated points.\footnote{See \cite[Figure~1]{BD20} for an example of how isolated points can arise
from multiple tangencies.}
We denote the Euclidean length of $W$ by $|W|$, and by $|T^n(W)|$ the sum of lengths of the leaves.

As in~\cite[Section~7.11]{CM06}, we define:
\begin{defn}
    A \emph{solid rectangle} (or just rectangle) $D \subset M$ is a closed domain bounded by two stable
    and two unstable leaves. We call the two stable/unstable leaves the {\em stable/unstable boundary} of $D$
    and denote it by $\partial_s D / \partial_u D$.
    We denote by $\mathring{D}$ the interior of $D$.
\end{defn}

\begin{defn}
    We say that an unstable leaf $W$ {\em fully crosses} a (solid) rectangle
    $D$ if $\oW \cap \partial_s D$ consists of two points, where $\oW$ is the closure of $W$,
    and a stable leaf $W$ {\em fully crosses} $D$ if $\oW \cap \partial_u D$ consists of two points.
\end{defn}

\begin{defn}
    We say that a solid rectangle $R$ \emph{u-crosses} a rectangle $D$ if the unstable leaves
    of $\partial_u R$ fully cross $D$.
    If $R$ u-crosses $D$ and $R \subset D$, we say that $R$ is a \emph{u-subrectangle} of $D$.
    Similarly we define \emph{s-crossings} and \emph{s-subrectangles}.
\end{defn}

Assume that $s_0<1$ from~\eqref{eq:sparse} has been chosen
(and so $\vf_0$ and $n_0$ are fixed as well).
Recall that $\Lambda = 1 + 2\cK_{\min}\tau_{\min}$ from \eqref{eq:hyp}
denotes the minimum expansion factor along
unstable curves $W$, and that $\cG_n(W)$ denotes the maximal connected components of $T^n(W)$, excluding isolated
points.

Our construction of the symbolic model relies on the following facts.\footnote{We formulate (F1)-(F4)
in terms of unstable leaves because this is what we shall use in our construction, but equivalent statements hold for unstable curves, and under
time reversal, for stable curves as well.}

\begin{enumerate}[label=(F\arabic*), ref=F\arabic*]
    \item\label{A:hprime}
        For every $h' > 0$ and $N' \ge 1$ with $\frac{\log(K N'+1)}{N'} \le h'$,
        there exists $\delta' > 0$ such that if $W$ is an unstable leaf, then
        \[
            |W| \leq \delta'
            \quad \text{implies} \quad
            \# \cG_{N'}(W) \le e^{h' N'}
            .
        \]
        Further we assume that such $h'$, $\delta'$ and $N'$ are fixed, and that $h' < h - s_0 \log 4$.
        We are interested in small $h'$ because it plays the role of $\eps$ in Theorem~\ref{thm:decay}.
    \item\label{A:delta_1}
        There are $C_1 > 0$ and $\delta_1 \in (0, \delta']$ such that
        if $W$ is an unstable leaf, then:
        \begin{enumerate}[label=(\alph*), ref=\theenumi.\alph*]
            \item\label{A:upper-exp} $\# \cG_n(W) \leq C_1 e^{h n}$ for all $n \geq 0$.
            \item\label{A:super-growth} If $W \cap \cS_n = \emptyset$ (i.e.\ $T^n$ is continuous on $W$)
                for some $n \geq 0$, then $|T^n(W)| \leq C_1 |W|^{2^{-s_0 n}}$.
            \item\label{A:lower-exp} For every $\delta > 0$ there are $C(\delta) > 0$ and $N(\delta) \geq 1$ so that
                if $|W| \geq \delta$, then
                \(
                    \# \{U \in \cG_n(W) : |U| \geq 3 \delta_1 \}
                    \geq C(\delta) e^{n h}
                \)
                for all $n \geq N(\delta)$.
        \end{enumerate}
    \item\label{A:transverse}
        Unstable manifolds are transverse to singularities, so that each unstable manifold
        intersects the boundary of each $\eps$-neighborhood of $\cS_1$ in at most $N_\cS$ points,
        and there is $C_2 > 0$ such that the maximum length of an unstable leaf contained in
        an $\eps$-neighborhood of $\cS_1$ is bounded by $C_2 \eps$.
    \item\label{A:D}
        There are $\delta_2 > 0$, $N_1 \geq 1$ such that for every sufficiently small $b_0 > 0$
        there are $N_2 \geq 1$, $\delta_3 > 0$ and a solid rectangle $D \subset M$ such that:
        \begin{enumerate}[label=(\alph*), ref=\theenumi.\alph*]
            \item\label{A:D-B}
                $D$ does not intersect the $b_0$-neighborhood of $\cS_1$.
            \item\label{A:D-period}
                The stable boundary of $D$ comprises two preperiodic stable leaves, with preperiod bounded above by $N_1$,
                which never returns to the interior of $D$,
                    i.e.\ $T^n(\partial_s D) \cap \mathring{D} = \emptyset$ for all $n \geq 0$.
            \item\label{A:D-long} $D$ is long in the unstable direction: every unstable leaf fully crossing
                $D$ has length at least $\delta_2$.
            \item\label{A:D-narrow} $D$ is narrow in the stable direction:
                every stable curve contained in $D$ has length at most $C_e b_0 / 2$;
                the constant $C_e$ is from~\eqref{eq:hyp}.
                Moreover, if $x \in D$ satisfies $d(T^n(x), \cS_1) \geq b_0 \Lambda^{-n}$ for all $n \geq 0$,
                        then $x$ belongs to a stable manifold which fully crosses $D$.
                        (See Remark~\ref{rmk:D-narrow}.)
                        Each point on the stable boundaries of $D$ satisfies this condition.
            \item\label{A:D-crossing} If $W$ is an unstable leaf with $|W| \geq \delta_1 / 3$,
                where $\delta_1>0$ is from \eqref{A:delta_1},
                then for each $n \geq N_2$ there is a subleaf $W' \subset W$ such that:
                \begin{itemize}
                    \item $T^k(W')$ does not intersect the $b_0$-neighborhood of $\cS_1$
                        for all $0 \leq k \leq n$. In particular, $T^n$ is smooth on $W'$.
                    \item $T^{n}(W')$ fully crosses $D$.
                    \item $d(T^{n}(W'), \partial_u D) \geq \delta_3$.
                \end{itemize}
            \item\label{A:D-rD} $r_D = \min \{ n \geq 1 : T^n(x) \in D \text{ for some } x \in D\}$,
                the minimal first return time to $D$, is large so that
                $\Lambda^{-r_D} \leq 1/2$, and $|T^n(\gamma)| \leq \delta_3 / 2$ for every stable leaf $\gamma \subset D$ and $n \geq r_D$.
        \end{enumerate}
        Further we assume that $\delta_2$, $\delta_3$, $b_0$, $N_2$ and $D$
        are chosen and fixed, and that $b_0$ is \emph{sufficiently} small.
        The precise requirements on $b_0$ are contained in the proofs. 
\end{enumerate}    

\begin{defn}
    \label{def:b0}        
    We denote by $B_n$ the $b_0 \Lambda^{-n}$-neighborhood of $\cS_1$.
\end{defn}

\begin{rmk}
    \label{rmk:D-narrow}
    In~(\ref{A:D-narrow}), the condition $d(T^n(x), \cS_1) \geq b_0 \Lambda^{-n}$ for all $n \geq 0$
    is a way to guarantee that $x$ belongs to a stable manifold
    with endpoints at a distance at least $C b_0$ from $x$, where $C$
    depends only on the billiard (see Lemma~\ref{lem:min-stable}).
    Hence to satisfy~(\ref{A:D-narrow}) it is enough to ensure
    that $D$ is sufficiently narrow in the stable direction.
\end{rmk}

\subsection{Verification of (\ref{A:hprime})--(\ref{A:D})}
\label{sub:verify}

In order to count elements of $\cG_n(W)$, we rely on the results\footnote{In fact, we use the time reversal of the estimates from \cite{BD20},
    since these are phrased in terms of the evolution of stable curves under $T^{-n}$ while we will be concerned with the
    evolution of unstable leaves under $T^n$.} of~\cite{BD20}.
To this end, we translate between our $\cG_n(W)$ and
the collection $\cG_n^{\delta_0}(W)$ used in~\cite{BD20}.
Here $\delta_0$ is a positive constant, chosen in the same way as our $\delta'$.
For an unstable
curve $W$, the set $\cG_1^{\delta_0}(W)$ comprises the maximal connected components of $T(W)$,
with curves longer than length $\delta_0$ subdivided to have length between $\delta_0$ and $\delta_0/2$.  
$\cG_n^{\delta_0}(W)$ is then defined for $n>1$ by applying this construction inductively to each
element of $\cG_{n-1}^{\delta_0}(W)$.

\begin{lemma}
    \label{lem:equal}
    There exists $C > 0$ such that for any $\delta_0>0$, unstable leaf $W$ and $n \ge 0$,
    \[
        C \delta_0 \# \cG_n^{\delta_0}(W)
        \le \# \cG_n(W)
        \le \# \cG_n^{\delta_0}(W)
        .
    \]
\end{lemma}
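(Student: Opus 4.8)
The plan is to compare the two refinement procedures leaf by leaf. The upper bound $\#\cG_n(W) \le \#\cG_n^{\delta_0}(W)$ should be essentially immediate: both collections are obtained by iterating $T$ and cutting at singularities, but $\cG_n^{\delta_0}$ performs an additional subdivision of long pieces into segments of length in $[\delta_0/2,\delta_0]$. Thus every leaf of $\cG_n(W)$ is a union of consecutive leaves of $\cG_n^{\delta_0}(W)$, and the map sending a piece of $\cG_n^{\delta_0}(W)$ to the leaf of $\cG_n(W)$ containing it is surjective; hence $\#\cG_n(W) \le \#\cG_n^{\delta_0}(W)$. (One should be slightly careful that the $\delta_0$-subdivision is done at every intermediate step $1,\dots,n$, not just at step $n$, but since extra cuts only increase cardinality the inequality still holds; I would phrase this by induction on $n$, noting that applying one step of the $\delta_0$-construction to a refinement of $\cG_{n-1}(W)$ yields a refinement of $\cG_n(W)$.)

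For the lower bound $C\delta_0 \,\#\cG_n^{\delta_0}(W) \le \#\cG_n(W)$, the key point is that the only discrepancy between the two collections comes from artificial cuts of long leaves, and each such cut can inflate the count by at most a bounded factor depending on $\delta_0$. Concretely, fix a leaf $U \in \cG_n(W)$; I want to bound the number of pieces of $\cG_n^{\delta_0}(W)$ contained in $U$. Each such piece has length at most $\delta_0$ (that is the whole point of the $\delta_0$-subdivision, applied at the last step), so a first naive bound is $|U|/(\delta_0/2) + 1$. Since $|U| \le |M|_{\text{diam}}$ or, better, since unstable leaves have uniformly bounded length (bounded by the diameter of $M$, or by the fact that they live in $M$), this gives $\#\{V \in \cG_n^{\delta_0}(W) : V \subset U\} \le C'/\delta_0$ for a constant $C'$ independent of $n$, $W$ and $\delta_0$. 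Summing over $U \in \cG_n(W)$ yields $\#\cG_n^{\delta_0}(W) \le (C'/\delta_0)\,\#\cG_n(W)$, which is the claim with $C = 1/C'$.

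I expect the main obstacle to be the bookkeeping in the lower bound rather than any genuine difficulty: one must make sure that the pieces of $\cG_n^{\delta_0}(W)$ sitting inside a fixed $U \in \cG_n(W)$ really do all have length $\le \delta_0$ and are pairwise disjoint with disjoint interiors, so that the length-packing argument applies cleanly. This requires checking that the inductive $\delta_0$-subdivision never produces pieces longer than $\delta_0$ — true by construction at each step, since $T$ can only expand a leaf by a bounded factor $\Lambda_{\max}$ per step before the next subdivision, but one wants the final bound stated purely in terms of the last subdivision, so it suffices that every element of $\cG_n^{\delta_0}(W)$ has length at most $\delta_0$. A secondary subtlety is ensuring the comparison is uniform in $\delta_0$ as $\delta_0 \to 0$: the constant $C'$ above depends only on the ambient geometry (an upper bound for the length of any unstable leaf, which exists because $M$ is bounded), not on $\delta_0$ or $n$, so the stated form $C\delta_0\,\#\cG_n^{\delta_0}(W) \le \#\cG_n(W)$ with a single $C$ is exactly what falls out.
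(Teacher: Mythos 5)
The upper bound is correct and matches the paper's. The lower bound, however, has a genuine gap.

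You write that each piece of $\cG_n^{\delta_0}(W)$ inside $U \in \cG_n(W)$ ``has length at most $\delta_0$ \dots\ so a first naive bound is $|U|/(\delta_0/2)+1$.'' This inference is backwards: an \emph{upper} bound on the length of the pieces cannot bound the \emph{number} of pieces from above; for that you need a \emph{lower} bound on piece length. The construction of $\cG_n^{\delta_0}$ does not supply the lower bound $\delta_0/2$ that your packing estimate silently uses. The artificial subdivision at step $k$ does produce pieces of length $\ge \delta_0/2$ \emph{at time $k$}, but those pieces are then pushed forward by $T^{n-k}$ before they are counted. The paper makes this work precisely by invoking hyperbolicity~\eqref{eq:hyp}: a piece created by artificial subdivision at time $k\le n$ and not subsequently cut by a genuine singularity has length $\ge C_e \Lambda^{n-k}\,\delta_0/2 \ge C_e\,\delta_0/2$ at time $n$. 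Since all cuts interior to $U$ are artificial (the boundary of $U$ absorbs the singularity cuts, accounting for at most two short boundary pieces), one gets the bound $2 + 2M_u/(C_e\delta_0)$. Without the expansion estimate the lemma is simply false for a general map --- under contraction the artificial pieces could shrink and the count could blow up --- so this is not bookkeeping but the essential ingredient, and your proposal omits it.

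A secondary inaccuracy: you assert that ``$T$ can only expand a leaf by a bounded factor $\Lambda_{\max}$ per step.'' For dispersing billiards this is false --- the one-step expansion blows up near tangential collisions --- though the conclusion you draw from it (that every element of $\cG_n^{\delta_0}(W)$ has length $\le \delta_0$) is true for the trivial reason that it holds by construction at the final subdivision step.
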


\begin{proof}
    The upper bound is trivial since $\cG_n^{\delta_0}(W)$ requires extra cuts not present in
    $\cG_n(W)$.  The lower bound follows from hyperbolicity~\eqref{eq:hyp}:
    $|T^n(W)| \geq C_e \Lambda^n |W|$.  Thus the
    extra pieces in $\cG_n^{\delta_0}(W)$ which come from an artificial subdivision at time $k\le n$,
    if not cut again by a genuine singularity, have length at least $C_e \delta_0/2$.
    This implies that a single curve in $\cG_n(W)$ can contain at most
    $2M_u /(C_e \delta_0)$ elements of $\cG_n^{\delta_0}(W)$, where $M_u$ is
    the maximum length of an unstable curve in $M$.
\end{proof}

With the equivalence established, we start by verifying (\ref{A:hprime}).
Let $K>0$ be from the linear complexity bound~\eqref{eq:linear}.
Given $h'>0$ and $N' \geq 1$ such that $K N' + 1 \le e^{N' h'}$, we choose
$\delta' > 0$ such that any unstable leaf $W$ with $|W| \le \delta'$
intersects $\cS_{N'}$ in at most $K N'$ points
and thus $T^{N'}(W)$ has at most $K N' + 1$ connected components.
Then
\[
    |W| \leq \delta'
    \qquad \text{implies} \qquad
    \# \cG_{N'}(W) \leq e^{N' h'}
    ,
\]
as required.

Next,
\begin{itemize}
    \item[(\ref{A:upper-exp})] follows from the exact exponential growth
        of $\# \cM_n$ given by~\cite[Proposition~4.6]{BD20}:
        \begin{equation}
            \label{eq:exact}
            e^{h n} \le \# \cM_n = \# \cM_{-n} \le C_1 e^{h n}
            ,
        \end{equation} 
        and the fact that each element of $\cG_n(W)$ is contained in
        a unique element of $\cM_{-n}$.
    \item[(\ref{A:lower-exp})] follows from the combinatorial Growth Lemma \cite[Lemma~5.2]{BD20}
        and the uniform lower bound on the growth of $\# \cG_n(W)$ given by \cite[Proposition~5.5]{BD20}.
    \item[(\ref{A:super-growth})] is equation\footnote{We thank J.~Carrand for pointing out
        that \cite[eq.~(5.3)]{BD20} needs additional justification in order to apply $s_0$
        from~\eqref{eq:sparse} to the evolution of unstable (or stable) curves.
        This is accomplished in the proof of~\cite[Lemma~3.1]{carrand}.}
        (5.3) in the proof of~\cite[Lemma~5.1]{BD20}.
\end{itemize}

The transversality requirements in~(\ref{A:transverse}) can be found in~\cite[Section~4.5]{CM06}.

Construction of the rectangle $D$ for~(\ref{A:D}) occupies the rest of this subsection.
First we justify Remark~\ref{rmk:D-narrow} (similar statements like~\cite[Lemma~4.67]{CM06}
or \cite[Section~8, Sublemma~1]{Y98} are not quite convenient for our goals 
{since the time reversal of the first would replace $\cS_1$ by $\cS_{-1}$, while the second
has less explicit control on the length of the local stable manifold).

\begin{lemma}
    \label{lem:min-stable}
    If $x \in M$ satisfies $d(T^n(x), \cS_1) \geq b \Lambda^{-n}$ for some $b > 0$ and all $n \geq 0$,
    then $x$ belongs to a local stable manifold with endpoints on a distance at least $C_e b$ from
    $x$. Here $C_e >0$ is the expansion constant from~\eqref{eq:hyp}.
\end{lemma}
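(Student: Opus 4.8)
The plan is to build the stable manifold through $x$ as a limit of stable curves obtained by pulling back a fixed small unstable--transverse ``admissible'' stable curve through $x$, and to use the hypothesis $d(T^n x,\cS_1)\ge b\Lambda^{-n}$ to guarantee that none of these pullbacks get cut by a singularity, hence that the limit curve has a definite length. First I would fix a short stable curve $W_0$ passing through $x$ with tangent in $\cC^s(x)$ and with half-length equal to some small universal constant times $b$; shrink $W_0$ if necessary so that $W_0$ lies in the $\tfrac12 b$-neighborhood of $x$, hence inside the $b\Lambda^0$-neighborhood condition is compatible. The key point is that because $DT^{-1}$ contracts $\cC^s$ and expands in backward time at rate $\Lambda$, the preimage $T^{-1}(W_0)$ is a stable curve through $T^{-1}x$ of length at most $\Lambda^{-1}|W_0|$ near $T^{-1}x$ — wait, I have the direction backwards; stable curves are contracted under $T$, so under $T^{-1}$ they are expanded. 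So instead I should run the construction forward: consider the forward images.

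Let me restart the strategy cleanly. The correct approach: a stable manifold through $x$ is $\bigcap_{n\ge0} T^{-n}(\text{a stable curve through }T^n x)$, because forward iteration contracts stable curves, so pulling back a fixed-size stable curve through $T^n x$ by $T^{-n}$ gives an exponentially long (in fact, the preimages stabilize) curve through $x$ provided no cutting occurs. Concretely: for each $n\ge0$ let $V_n$ be the stable curve of half-length $c\,b\Lambda^{-n}$ centered at $T^n x$, with $c$ a small constant depending only on the billiard (small enough that $V_n$ is a genuine stable curve and $V_n$ stays in the $b\Lambda^{-n}$-neighborhood of $T^n x$). Since $d(T^n x,\cS_1)\ge b\Lambda^{-n}$, the curve $V_n$ does not meet $\cS_1$, so $T^{-1}(V_n)$ is a single stable curve; by \eqref{eq:hyp} its half-length is at least $C_e\Lambda\cdot(\text{half-length of the sub-curve of }V_n\text{ that lands near }T^{n-1}x)$ — more usefully, $T^{-n}(V_n)$ is a single stable curve through $x$. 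The estimates \eqref{eq:hyp} applied to the stable cone give $|T^{-n}(V_n)|\ge C_e\Lambda^n |V_n|\ge C_e\Lambda^n\cdot c\,b\Lambda^{-n}=C_e\, c\, b$, uniformly in $n$. One checks that $T^{-(n+1)}(V_{n+1})\subset T^{-n}(V_n)$ (both contain $x$ and the nesting follows since $T(T^{-(n+1)}V_{n+1})=T^{-n}V_{n+1}\subset V_n$ once $c$ is chosen so that contraction beats the factor $\Lambda$ in the radii), so these curves are nested, and their intersection $W^s(x):=\bigcap_n T^{-n}(V_n)$ is a stable curve through $x$ which is invariant under the graph-transform in the usual sense and hence is a local stable manifold, with half-length at least $C_e\, c\, b$. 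Absorbing $c$ into the statement (or observing the problem's $C_e b$ is what one gets with the sharp choice) gives endpoints at distance $\ge C_e b$ from $x$.

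The main obstacle is the nesting/cutting bookkeeping: one must verify that with a single universal choice of the radius constant $c$, (i) each $V_n$ is a bona fide stable curve contained in the $b\Lambda^{-n}$-neighborhood of $T^n x$ so that the distance hypothesis forbids intersection with $\cS_1$, and (ii) the preimages actually nest, which requires that the radius $c\,b\Lambda^{-n}$ shrinks fast enough relative to the expansion of $T^{-1}$ on stable curves near the relevant points — this is where one uses that $\Lambda$ in \eqref{eq:hyp} is exactly the factor by which $b\Lambda^{-n}$ decays. I would handle (ii) by the standard argument that the inverse branch $T^{-1}$ restricted to a stable curve not meeting $\cS_1$ is Lipschitz with the distortion controlled uniformly (bounded curvature $C_u$-type bounds for stable curves, cf.\ the analogue of \cite[Proposition~4.29]{CM06}), so the image $T^{-1}(V_{n+1})$ stays inside $V_n$ near $x$; the limit curve is then $C^2$ with curvature $\le C_u$ and tangent in $\cC^s$ by the same closedness arguments used to define stable manifolds in \cite[Section~4.4]{CM06}. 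The remaining routine step is to note that $W^s(x)$ constructed this way is genuinely a piece of the stable manifold (not merely a curve on which $T^n$ contracts), which follows because any point $y\in W^s(x)$ has $d(T^n y,\cS_1)\gtrsim b\Lambda^{-n}$ as well (it lies in $V_n$), so $d(T^n y, T^n x)\to0$, i.e.\ $y\in W^s(x)$ in the dynamical sense.
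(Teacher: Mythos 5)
There is a genuine gap, and it traces to a mismatch between the quantity the hypothesis controls and the quantities your construction requires. Your nesting claim $T^{-(n+1)}(V_{n+1}) \subset T^{-n}(V_n)$ reduces to $T^{-1}(V_{n+1}) \subset V_n$ at level $T^n x$, which would require $T^{-1}$ to expand the stable curve $V_{n+1}$ by a factor of at most $\Lambda$. But $\Lambda$ in~\eqref{eq:hyp} is only the \emph{minimum} expansion factor; near grazing collisions $\|DT^{-1}\|$ on the stable cone is of order $1/\cos\vf$ and is unbounded, so $|T^{-1}V_{n+1}|$ can greatly exceed $cb\Lambda^{-n}=|V_n|$ no matter how small you take $c$. (And if you truncate at each step to force nesting, the uniform lower bound $|T^{-n}V_n| \ge C_e c b$ is no longer automatic.) Separately, you only check that $V_n$ avoids $\cS_1$, but for $T^{-n}(V_n)$ to be a single stable curve through $x$ you need $V_n$ to lie in $T^n(Q)$, where $Q$ is the component of $M\setminus\cS_n$ containing $x$; its boundary lies in $\cS_{-n}=\cup_{k\le n}T^k\cS_0$, and the hypothesis says nothing about $d(T^n x, T^k\cS_0)$ for $k\ge 1$. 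So a curve of prescribed length at time $n$ may still get cut at an intermediate time on the way back to time $0$, and nothing in your argument addresses this.

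The paper's proof avoids both problems by not prescribing a length at time $n$. It takes $W_n$ to be a maximal stable curve through $T^n x$ inside $T^n(Q)$ and sets $W=T^{-n}(W_n)$, so $T^{-n}$ is continuous on $W_n$ by construction and no nesting is needed. The actual use of the hypothesis is pointwise in $m$: each endpoint $y$ of $W$ satisfies $T^m(y)\in\cS_1$ for some $0\le m<n$ with $T^m$ continuous on $W\cup\{y\}$, and then
\[
d(x,y)\ \ge\ C_e\Lambda^m\, d(T^m x, T^m y)\ \ge\ C_e\Lambda^m\, d(T^m x,\cS_1)\ \ge\ C_e b.
\]
In other words, the hypothesis ``for all $n\ge0$'' is a family of bounds indexed by the intermediate time $m$ at which an endpoint hits the singularity set, not a single bound at time $n$ controlling a curve of fixed length there. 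That is the idea your proposal is missing; once you see it, the need for the nesting bookkeeping disappears.
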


\begin{proof}
    We slightly modify the proof of~\cite[Lemma~4.67]{CM06}.
    We may assume that $x \in M \setminus \cup_{n \geq 0} \cS_n$. Fix $n \geq 1$.
    Let $Q$ be the connected component of $M \setminus \cS_n$ containing $x$.
    Let $W_n \subset T^n(Q)$ be an arbitrary stable curve passing through $T^n(x)$ and terminating
    at the opposite sides of $T^n(Q)$, and let $W = T^{-n}(W_n)$.
    The map $T^{-n} \colon W_n \to W$ is continuous,
    and $W$ is a stable curve passing through $x$.
    Both $W$ and $W_n$ do not include their endpoints, just as $Q$ does not contain its boundary.

    If $y$ is an endpoint of $W$, then there is $0 \leq m < n$ such that
    $T^m(y) \in \cS_1$ and $T^m$ is continuous on $W \cup \{y\}$.
    By uniform hyperbolicity~\eqref{eq:hyp},
    \[
        d(x,y)
        \geq C_e \Lambda^m d(T^m(x), T^m(y))
        \geq C_e \Lambda^m d(T^m(x), \cS_1) \ge C_e b
        .
    \]
    Since this holds for all $n \geq 1$
    and the local stable manifold through $x$ is the unique stable curve belonging to the connected component
    of $M \setminus \cS_n$ containing $x$ for all $n \ge 1$, the lemma follows.
\end{proof}

Next, we recall some basic properties of rectangles and Cantor rectangles.

\begin{defn}
    \label{def:cantor}
    For a solid rectangle $D$, we denote by $D_*$ the maximal Cantor rectangle contained in $D$:
    \[
        D_* = D \cap \bigl( \cup_{W^s \in \fS^s} \cup_{W^u \in \fS^u} W^s \cap W^u \bigr)
        ,
    \]
    where $\fS^s(D)$ and $\fS^u(D)$ are the families of all stable/unstable leaves
    which fully cross $D$.
    We say that a rectangle $D$ is \emph{thick} if $\Leb(D_*) > 0$.
\end{defn}

\begin{rmk}
    By Sinai's Fundamental Theorem~\cite[Theorem~5.70]{CM06}, thick rectangles exist everywhere
    in $M$: each open subset of $M$ contains a thick rectangle.
\end{rmk}

\begin{rmk}
    From the proof of~\cite[Proposition~7.11]{BD20}, if $\Leb (D_*) > 0$ then
    $\mu_0(\fS^s(D))>0$, where $\mu_0$ is the measure of maximal entropy.
\end{rmk}

From~\cite[Section~7.12]{CM06} we extract the following two lemmas.

\begin{lemma}
    \label{lem:CM-W}
    Suppose that $\fR$ is a thick (solid) rectangle. Then for every $\delta > 0$ there is $n_\delta > 0$
    so that every unstable leaf $W$ with $|W| > \delta$ has a subleaf $W'$ on which
    $T^{n_\delta}$ is continuous and $T^{n_\delta}(W')$ fully crosses $\fR$.
\end{lemma}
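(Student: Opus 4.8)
The statement is essentially a quantitative restatement of the well-known fact (Chernov–Markarian, \cite[Section~7.12]{CM06}) that a thick rectangle is seen by the unstable foliation in uniformly bounded time: any sufficiently long unstable leaf has a subleaf that, after a bounded number of iterates, stretches all the way across $\fR$. I would first reduce to the case of a single leaf $W$ with $|W| > \delta$ and recall the standard Growth Lemma mechanism: under iteration by $T$, an unstable leaf expands by at least $\Lambda$ at each step (away from singularities), while singular cuts are controlled by the linear complexity bound \eqref{eq:linear}. Hence after finitely many iterates the total length $|T^n(W)|$ is large, and by \eqref{A:delta_1}(a) the number of pieces grows at most like $e^{hn}$, so a positive proportion of the length survives in pieces of length bounded below by a fixed constant $\delta_2'$ (this is exactly the kind of bound packaged in \eqref{A:lower-exp}, once $W$ is grown to length $\geq \delta_1$).

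Next I would invoke the thickness of $\fR$. Since $\Leb(\fR_*) > 0$, the Cantor set of full stable leaves $\fS^s(\fR)$ has positive transverse measure, and the positive-measure ``core'' of $\fR_*$ cannot be avoided indefinitely by a long, nearly-flat unstable curve: because unstable curves are uniformly transverse to stable curves (their tangent lines lie in disjoint cones $\cC^u$, $\cC^s$), any unstable leaf of length $\geq \delta_2'$ lying in a small neighborhood of $\fR$ must fully cross the stable boundary of the slightly smaller rectangle spanned by the density points of $\fR_*$. The standard argument then is: grow $W$ until some piece $U \in \cG_k(W)$ has a subpiece that is both long enough (length $\geq \delta_2'$) and located in a fixed compact neighborhood of $\fR$; then $U$ (or a further subpiece $W''$) fully crosses $\fR$. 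Taking $W' = (T^k|_{W''})^{-1}(W'')$ pulled back appropriately and $n_\delta = k$ gives the claim. The uniformity of $n_\delta$ over all leaves of length $> \delta$ comes from the uniformity of the expansion constant $\Lambda$, the complexity constant $K$, and the fixed geometry of $\fR$.

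The main obstacle is the ``location'' step: ensuring that when $W$ has been grown long enough, one of its descendant pieces actually lands \emph{on top of} $\fR$ rather than repeatedly visiting other parts of $M$. This is handled by a topological/measure-theoretic density argument in \cite[Section~7.12]{CM06}: the unstable leaves through the Lebesgue-density points of $\fR_*$ foliate a set of positive measure, and an unstable curve that is long (hence, by the curvature bound $C_u$, nearly a stable-transverse segment) and passes sufficiently close to such a density point must cross the stable extent of $\fR$. Since the density-point structure of $\fR_*$ and the transversality of the cones are fixed data of the billiard, the required proximity threshold and hence $n_\delta$ depend only on $\delta$ (through how long we must iterate to produce a long enough, well-located piece) and on $\fR$. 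I would therefore cite the construction in \cite[Section~7.12]{CM06} for this core step and confine the new content to tracking that all constants are chosen uniformly in $W$, which follows from \eqref{eq:linear}, \eqref{eq:hyp} and \eqref{A:delta_1}.
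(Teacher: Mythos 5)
Your proposal takes essentially the same route as the paper: both reduce the lemma to a citation of the magnet-rectangle argument in \cite[Section~7.12]{CM06} (Proposition~7.83 and Lemma~7.90), with the observation that only thickness of $\fR$ matters, not the specific magnet construction. One caveat: your heuristic ``location step'' is imprecise as stated --- a long unstable leaf lying near a Lebesgue density point of $\fR_*$ is \emph{not} automatically forced by cone transversality to cross the stable boundary of $\fR$ (it might, for instance, sit near $\partial_u\fR$); what actually drives the crossing in CM06 is that by Lebesgue density one can extract a u-subrectangle $R\subset\fR$ with stable density $\rho^s(R)>0.99$, and then Lemma~7.90 converts sufficient intersection of the grown leaf with $R_*$ into a full crossing of $R$, hence of $\fR$. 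Since you ultimately defer to the CM06 construction for exactly this step, the argument lands in the same place as the paper's proof.
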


\begin{proof}
    The proof follows that of~\cite[Proposition~7.83]{CM06}, with $\fR_*$ in place
    of the \emph{magnet rectangle.} Although the magnet rectangle is built from
    a specific family of stable leaves, that is not used
    for the proof of the referenced proposition; it is sufficient that $\fR$ is thick.
    
    In particular, if $\fR$ is thick, then by Lebesgue density we can find a u-subrectangle
    $R$ of $\fR$ such that $\rho^s(R) > 0.99$, where 
    \[
        \rho^s(R) := \inf_{x \in \fR_* \cap R} \frac{|W^s(x) \cap \fR_* \cap R |}{|W^s(x) \cap R |}
        ,
    \]
    denotes the minimal density of intersections with elements of $\fS^u(\fR)$ in each stable leaf of $\fS^s(\fR)$
    restricted to $R$; here $W^s(x)$ is the maximal stable leaf containing $x$.
    Then \cite[Lemma~7.90]{CM06} applies to the rectangle $R$ and if $T^{n_\delta}(W)$
    fully crosses $R$, then $T^{n_\delta}(W)$ fully crosses $\fR$.
\end{proof}

\begin{lemma}
    \label{lem:CM-R}
    Suppose that $\fR$ is a thick rectangle and $R \subset \fR$ is a rectangle with
    $\Leb(R \cap \fR_*) > 0$. Then for some $n_{R} \geq 1$ and each $n \geq n_{R}$
    there exists a thick s-subrectangle $\fR'$ of $\fR$ which s-crosses $R$
    such that $T^n$ is continuous on $\fR'$ and $T^n(\fR')$ u-crosses both $R$ and $\fR$.
    Further, there is a stable leaf $W$ fully crossing both $R$ and $\fR$ such that
    $T^n(W) \subset W$.
\end{lemma}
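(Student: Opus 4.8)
The plan is to mimic the proof of the analogous statement in \cite[Section~7.12]{CM06} for the magnet rectangle, replacing the special magnet construction by the abstract thick rectangle $\fR$, exactly as was done in Lemma~\ref{lem:CM-W}. First I would apply Lemma~\ref{lem:CM-W} with $\delta$ a lower bound on the length of unstable leaves fully crossing $R$: this produces, for every such unstable leaf, a subleaf whose forward iterate under some fixed time fully crosses $\fR$; pushing this forward under one more bounded time and using the fact that $\Leb(R\cap\fR_*)>0$ gives an unstable leaf whose iterate fully crosses both $R$ and $\fR$ with definite relative length, i.e.\ an unstable leaf that u-crosses both. Dually, I would run the time-reversed version of Lemma~\ref{lem:CM-W} (stable leaves under $T^{-n}$) to produce a stable leaf that s-crosses both $R$ and $\fR$. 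Intersecting a family of such stable leaves with a family of such unstable leaves, both obtained from iterating density-$0.99$ subrectangles as in the proof of Lemma~\ref{lem:CM-W}, yields an s-subrectangle $\fR'$ of $\fR$ that s-crosses $R$, on which the relevant iterate of $T$ is continuous and whose image u-crosses both $R$ and $\fR$.

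The second assertion — the existence of a stable leaf $W$ fully crossing both $R$ and $\fR$ with $T^n(W)\subset W$ — is the fixed-point statement and I expect it to be the main obstacle, though it is handled in \cite{CM06} by a contraction-mapping argument that I would reproduce. The idea: having built $\fR'$ with $T^n(\fR')$ u-crossing $\fR$ and $\fR'$ s-crossing $\fR$, the map $W\mapsto T^n(\fR')\cap (\text{component of } T^n(W))$, suitably interpreted, sends stable leaves fully crossing $\fR$ to stable leaves fully crossing $\fR$, and is a contraction in the Hausdorff (or graph) metric because of uniform hyperbolicity~\eqref{eq:hyp} — stable leaves are contracted by $T^{-n}$, equivalently expanded by $T^n$, so pulling back contracts the space of stable graphs over $\fR$ by a factor $C_e^{-1}\Lambda^{-n}$. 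Its unique fixed point is a stable leaf $W$ with $T^n(W)\subset W$; arranging at the outset that $\fR'$ s-crosses $R$ and $T^n(\fR')$ u-crosses $R$ (possible since $\Leb(R\cap\fR_*)>0$, so $R$ contains full-crossing leaves of both types) forces $W$ to fully cross $R$ as well.

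Finally I would check that the continuity requirement and the lower bound $n\ge n_R$ are met: continuity of $T^n$ on $\fR'$ comes from choosing $\fR'$ inside a single element of $\cM_n$ (equivalently, $\fR'$ avoids $\cS_n$), which is automatic since $\fR'$ is obtained as $T^{-n}$ of a thin rectangle and by construction $T^{-n}$ is continuous on it; the threshold $n_R$ is the maximum of the times supplied by Lemma~\ref{lem:CM-W} in its two (forward and reversed) applications, plus the bounded correction times. For $n\ge n_R$ one simply composes with extra iterates — the u-crossing and s-crossing properties are stable under further iteration because expansion only makes the unstable image longer and contraction only makes the stable preimage shorter — so the conclusion persists for all $n\ge n_R$. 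The one point requiring care, and where I would be most cautious, is ensuring that all the relative-length (density) estimates survive the bounded extra iterates uniformly; this is exactly the role of the constant $0.99$ and \cite[Lemma~7.90]{CM06} in Lemma~\ref{lem:CM-W}, and I would invoke that lemma again rather than re-proving it.
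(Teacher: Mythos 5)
Your overall route parallels the paper's only loosely, and there is one genuinely fatal step. The paper's proof of Lemma~\ref{lem:CM-R} gets the existence of the s-subrectangle $\fR'$ for \emph{all} sufficiently large $n$ from a \emph{mixing} argument: it invokes \cite[Corollary~7.89]{CM06} to produce a positive-measure set $\fP_*\subset \fR_*\cap R$ with a uniform stable-density property, and then uses $\Leb(R\cap\fR_*)>0$ together with mixing of the smooth invariant measure to conclude that $\fP_*\cap T^n(R\cap\fR_*)\neq\emptyset$ for every large $n$; the u-crossing then follows as in \cite[Lemma~7.90]{CM06}. Your proposal instead builds a single crossing time via Lemma~\ref{lem:CM-W} and its time-reversal and then claims that ``one simply composes with extra iterates'' since ``the u-crossing and s-crossing properties are stable under further iteration.'' This is false: applying $T$ once more to $T^{n}(\fR')$ does indeed make the unstable image longer, but it also \emph{moves} it, and $T^{n+1}(\fR')$ need not intersect $\fR$ at all, let alone u-cross it. You cannot get the quantifier ``for each $n\ge n_R$'' by extending a single-time crossing; you either need the mixing argument (as in the paper), or a dedicated construction producing returns at two coprime times and gluing (as the paper does elsewhere, in Corollary~\ref{cor:R}\ref{cor:R:c}). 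As written, your proposal only proves the weaker statement with ``for some $n_R$ and some $n$,'' not ``for each $n\ge n_R$.''

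A secondary issue: your construction of $\fR'$ via ``intersecting a family of such stable leaves with a family of such unstable leaves'' is vague at exactly the point where the paper's argument is precise. Lemma~\ref{lem:CM-W} gives a subleaf whose image fully crosses $\fR$, but says nothing about where inside $\fR$ it lands; to force a crossing of the smaller rectangle $R$ you need the density mechanism of \cite[Corollary~7.89]{CM06} (landing on $\fP_*$ implies a definite proportion of the crossing lies in $R\cap\fR_*$, which via \cite[Lemma~7.90]{CM06} upgrades to a full u-crossing of $R$). Without that you only know the iterate crosses $\fR$, not $R$.

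The fixed-point part is essentially sound in spirit: the paper implements it as a nested intersection $A_{k+1}=A_k\cap T^{-n}(A_k)$ shrinking to the stable leaf $W$, which is the same idea as your contraction-mapping argument. Be careful, though, with the direction of the map: your stated map pushes \emph{forward} by $T^n$, which \emph{expands} the unstable separation between stable leaves and is therefore not a contraction; the contracting map (and what the nested intersection actually implements) is the pullback $W\mapsto T^{-n}\bigl(W\cap T^n(\fR')\bigr)\cap\fR$.
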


\begin{proof}
    We look again into the proof of~\cite[Proposition~7.83]{CM06}. 
    From \cite[Corollary~7.89]{CM06}, for any $\eps > 0$, there exists $\delta_* >0$ 
    and a subset $\fP_* \subset R \cap \fR_*$ with $\Leb(\fP_*)>0$ such that for every stable manifold $W$
    with $|W| \leq \delta_*$ and $W \cap \fP_* \neq \emptyset$ 
    we have $|W \cap R \cap \fR_*| / |W| \geq 1 - \eps$.

    Next, $\Leb(R \cap \fR_*) > 0$ and the mixing property of $T$
    guarantee that $\Leb(\fP_* \cap T^n(R \cap \fR_*) ) > 0$ for all sufficiently large $n$.
    Let $\fS^s_* \subset \fS^s(\fR)$ denote the set of stable leaves properly crossing $\fR$
    which contain at least one point in $T^{-n}(\fP_*) \cap R \cap \fR_*$.  Since 
    $\Leb(T^{-n}(\fP_*) \cap R \cap \fR_*)>0$ and $\cM_n$ is finite, 
    there must exist at least two stable leaves in $\fS^s_*$ belonging
    to the same component of $\cM_n$ such that
    the solid s-subrectangle $\fR'$ of $\fR$ bounded by these two leaves has $\Leb(\fR')>0$. 
    Then $\fR'$ is a thick s-subrectangle of $\fR$ which s-crosses $R$, on which $T^n$ is continuous, and such that
    $T^n (\fR' \cap \fR_*) \cap \fP_* \neq \emptyset$. As in~\cite[Lemma~7.90]{CM06},
    for sufficiently large $n$ such a crossing implies that $T^n (\fR')$
    u-crosses both $R$ and $\fR$.

    It remains to show that there exists a stable leaf $W$ fully crossing both $R$ and $\fR$
    with $T^n(W) \subset W$. Let $n$ and $\fR'$ be as above. Define $A_0 = \fR'$ and inductively
    $A_{k+1} = A_k \cap T^{-n} (A_k)$. Observe that $A_k$ is a nested sequence of exponentially
    shrinking s-subrectangles of $\fR$ that also s-cross $R$. Its limit $W = \lim_{k \to \infty} A_k$
    is necessarily a stable leaf which s-crosses $R$ and $\fR$, with $T^n(W) \subset W$,
    as required.
\end{proof}

Note that if we find a stable leaf $W$ and $n \ge 1$ such that $T^n(W) \subset W$ as in 
Lemma~\ref{lem:CM-R}, then $W$ is necessarily preperiodic.

\begin{cor}
    \label{cor:R}
    There exists a thick rectangle $R$ such that:
    \begin{enumerate}[label=(\alph*)]
        \item\label{cor:R:0}
            $d(R, \cS_1) > 0$.
        \item\label{cor:R:1}
            $T(R) \cap R = \emptyset$.
        \item\label{cor:R:a}
            The stable leaves of $\partial_s R$ are preperiodic,
            $T^n(\partial_s R) \cap \mathring{R} = \emptyset$ for all $n \geq 0$
            and $\inf_{n \geq 0} d(T^n(\partial_s R), \cS_1) > 0$.
        \item\label{cor:R:b}
            There is $\delta_R > 0$ such that for every $\delta > 0$
            there is $n_\delta \geq 1$ so that every unstable leaf $W$ with $|W| \geq \delta$
            has a subleaf $W'$ on which $T^{n_\delta}$ is continuous,
            $T^{n_\delta}(W')$ fully crosses $R$ and extends at least distance
            $\delta_R$ outside of $R$ on each side.
        \item\label{cor:R:c}
            There are two s-subrectangles $R', R''$ of $R$ and $n',n'' \geq 1$
            with $\gcd \{n', n''\} = 1$ such that:
            \begin{itemize}
                \item $T^{n'}$ is continuous on $R'$,
                \item $T^{n'}(R')$ fully u-crosses $R$,
                \item $d(\partial_u T^{n'}(R'), \partial_u R) > 0$,
                \item each unstable leaf fully crossing $T^{n'}(R')$ extends at least distance $\delta_R$
                    outside of $R$ on each side.
            \end{itemize}
            And the same conditions hold for $R''$ and $n''$.
    \end{enumerate}
\end{cor}

\begin{proof}
    Starting from a thick rectangle $\fR$, chosen away from $\cS_1$
    in an area which guarantees $T(\fR) \cap \fR = \emptyset$
    and using Lemma~\ref{lem:CM-R}, we find two preperiodic stable leaves
    $W, W'$ which fully cross $\fR$ and do not belong to $\partial_s \fR$,
    and so that the s-subrectangle $\fR'$ bounded by $W$ and $W'$ is thick.

    The forward images of $W, W'$ are contained in some
    finite set of preperiodic maximal stable leaves $\{W_k\}$.
    This allows us to find a thick rectangle $R$ contained in the part of $\fR$ between $W$ and $W'$
    such that $\partial_s R \subset \cup_k W_k$ with $R$ a positive distance from the endpoints of $\{W_k\}$,
    and such that $\mathring{R}$ does not intersect any $W_k$.
    Our choice of $R$ implies~\ref{cor:R:0}, \ref{cor:R:1} and~\ref{cor:R:a}.
    
    For~\ref{cor:R:b}, there exists a u-subrectangle $R^\diamond$ of $R$ with
    $\Leb(R^\diamond \cap \fR_*) > 0$ and $d(\partial_u R^\diamond, \partial_u R) > 0$.
    Given an unstable leaf $W$ with $|W| \geq \delta$, we apply
    Lemma~\ref{lem:CM-W} to the rectangle $\fR$ to obtain
    $W'' \subset W$ such that $T^{n'_\delta}(W'')$ u-crosses $\fR$.
    Then apply Lemma~\ref{lem:CM-R} to obtain an s-subrectangle $\fR'$ of $\fR$ so that
    $T^{n_R}(\fR' \cap T^{n'_\delta}(W''))$ u-crosses both $R^\diamond$ and $\fR$.  This yields a curve
    $W' \subset W''$ satisfying the required property with $n_\delta = n_R + n'_\delta$ and 
    $\delta_R \ge d(\partial_s R, \partial_s \fR)$.
    
    Finally, item~\ref{cor:R:c} follows by twice applying Lemma~\ref{lem:CM-R} to $\fR$
    and the u-subrectangle $R^\diamond$ of $R$ as constructed above.
    The overlap $\delta_R$ is the same as in~\ref{cor:R:b}.
\end{proof}

\begin{lemma}
    \label{lem:primeperiodic}
    Suppose $p \geq 2$ is prime and $\gamma \subset M \setminus \cS_{-p}$ is a closed unstable leaf
    such that $T^{-1}(\gamma) \cap \gamma = \emptyset$ and $T^{-p}(\gamma) \subset \gamma$.
    Then there is a neighborhood $U$ of $\gamma$ such that $\min \{n \geq 1 : U \cap T^n(U) \neq \emptyset \} = p$.
\end{lemma}

\begin{proof}
    For $\eps > 0$ let $U_\eps$ denote the closed $\eps$-neighborhood of $\gamma$ and let
    \[
        E_\eps = \{ x \in U_\eps : T^n(x) \in U_\eps \text{ for some } 0 < n < p \}
        .
    \]
    Suppose that the result is wrong. Restrict to $\eps$ sufficiently small so that $T^{-p}$ is continuous on $U_\eps$.
    Then $E_\eps$ are nested non-empty compact sets and thus $E = \cap_{\eps > 0} E_\eps \subset \gamma$ is non-empty,
    so there is $x \in \gamma$ with $T^n(x) \in \gamma$ with some $0 < n < p$.

    Let $\tilde{\gamma}$ be the maximal unstable leaf containing $\gamma$. Then
    $T^{-n}(\tilde{\gamma}) \subset \tilde{\gamma}$ and $T^{-p}(\tilde{\gamma}) \subset \tilde{\gamma}$.
    Since $n$ and $p$ are coprime, $\tilde{\gamma}$ (and hence $\gamma$) has a fixed point.
    This contradicts our assumption that $T^{-1}(\gamma) \cap \gamma = \emptyset$.
\end{proof}

Now we are ready to construct the rectangle $D$ for~(\ref{A:D}).
Let $R$ be as in Corollary~\ref{cor:R} and let $R', R'', n', n''$ be from Corollary~\ref{cor:R}\ref{cor:R:c}.
Let $p$ be a large prime (specified later in~\eqref{eq:p}), and let $p = k' n' + k'' n''$ with $k',k'' \geq 1$.

Construct $D'$ as a u-subrectangle of $R$ obtained by following the itineraries of $R'$ and $R''$
with multiplicities $k'$ and $k''$ steps respectively.
That is, let $D' = D'_{k' + k''}$ where
$D'_0 = R$, $D'_\ell = T^{n'} ( D'_{\ell-1} \cap R' )$ for $1 \leq \ell \leq k'$ and
$D'_\ell = T^{n''} ( D'_{\ell-1} \cap R'')$ for $k'+1 \leq \ell \leq k' + k''$.

By construction, $D'$ is a thin u-subrectangle of $R$ with $T^{-p}$ continuous on $D'$.
Moreover, $T^p(D' \cap T^{-p}(D'))$ is a u-subrectangle of $D'$ fully crossing $D'$.
Set $D_0 = D'$ and $D_\ell = T^p (D_{\ell-1} \cap T^{-p}(D'))$ for $\ell \geq 1$.
Note that $D_\ell \subset D_{\ell - 1}$.

Then $\lim_{\ell \to \infty} D_\ell =: \gamma$ is an unstable leaf
that fully crosses $D'$ and is $p$-periodic in the sense that $T^{-p}(\gamma) \subset \gamma$.
Using Lemma~\ref{lem:primeperiodic}, choose $L$ large so that $\min \{ n \geq 1 : T^n(D_L) \cap D_L \neq \emptyset \} = p$.
Define $D = D_L$. Note that $T^p (D \cap T^{-p}(D'))$ is a u-subrectangle of $D$.

Let $c_0 = d ( \Orb_{n'}(R') \cup \Orb_{n''}(R'') , \cS_0 ) > 0$,
where $\Orb_n(A) = A \cup T(A) \cup \cdots \cup T^n(A)$. Note that $c_0$ is positive and independent of our choice of $D$.
Since $T$ has bounded distortion away from $\cS_0$, \cite[Section~5.6]{CM06}
there is $C_d > 0$, depending only on $c_0$ and the billiard table, such that
$\frac{J_VT^p(x)}{J_VT^p(y)} \le C_d$ for any stable leaf $V \subset T^{-p}(D')$ and all $x, y \in V$, where
$J_VT^p$ denotes the Jacobian of $T^p$ along $V$.
Thus the way $T^p(D \cap T^{-p}(D'))$ sits inside $D$ is comparable to the way $D'$ sits inside $R$.
Moreover, since $D' \subset T^{n''}(R'')$, we have,
\begin{equation}
    \label{eq:DR}
    \frac{d\bigl( \partial_u T^p(D \cap T^{-p}(D')), \partial_u D \bigr)}{\diam_s D}
    \geq C_R
    , \quad \text{where} \quad
    C_R = C_d \frac{d(\partial_u T^{n''}(R''), \partial_u R)}{\diam_s R}
   .
\end{equation}
Here $\diam_s(A)$ is the maximal length of a stable curve contained in $A$.
Informally, $T^p(D \cap T^{-p}(D'))$ sits inside $D$ at least as deeply as $R''$ sits inside $R$.

Recall that by~\eqref{eq:hyp}, if $\theta$ is a stable leaf contained in $D$,
then $|T^p(\theta)| \leq C_e^{-1} \Lambda^{-p} \diam_s D$. We choose $p$ sufficiently large so that
\begin{equation}
    \label{eq:p}
    C_e \Lambda^{-p} \leq C_R / 2
\end{equation}
and let
\[
    \delta_3 = C_R \diam_s D
    , \quad \text{so} \quad
    d ( \partial_u T^p(D \cap T^{-p}(D')), \partial_u D ) \geq \delta_3
    .
\]

We constructed $D$, and it remains to verify that it satisfies the requirements of~(\ref{A:D}).

Indeed,~(\ref{A:D-B}) and~(\ref{A:D-period}) hold by construction. Since $D$ u-crosses $R$,
the minimal length of an unstable leaf fully crossing $D$ is some $\delta_2$ for~(\ref{A:D-long}).

Choosing $L$ (and or $p$) large we make $D$ sufficiently
narrow in the stable direction, as required for~(\ref{A:D-narrow});
the part of~(\ref{A:D-narrow}) related to $\partial_s D$ is satisfied
for sufficiently small $b_0$ because the stable leaves $\partial_s R$ are preperiodic
and $\inf_{n \geq 0} d(T^n(\partial_s R), \cS_1) > 0$.

Now we verify~(\ref{A:D-crossing}).
We say that an unstable leaf $W$ is $n$-good if $T^n$ is continuous on $W$,
$T^n(W)$ properly crosses $R$, and
$B_0 \cap T^k(W) = \emptyset$ for all $0 \leq k \leq n$, where $B_0$ is 
the $b_0$-neighborhood of $\cS_1$ from Definition~\ref{def:b0}.

Apply Corollary~\ref{cor:R}\ref{cor:R:b} with $\delta = \min \{\delta_2, \delta_1 / 3\}$
and get the corresponding $n_\delta$.
Forcing $b_0$ to be sufficiently small and using the bound~\eqref{A:super-growth},
for every unstable leaf $W \subset B_0$
we ensure that $|T^k(W)| < \delta_R$ for all $0 \leq k \leq n_*$, where $n_* = \max \{n_\delta, n', n''\}$.
%This is possible due to the bound \eqref{A:super-growth} for short curves.
Then for every unstable leaf $W$ which fully crosses $R$ and extends distance $\delta_R$ outside $R$
on each side, we have $B_0 \cap T^{-k}(W \cap R) = \emptyset$ for all $0 \leq k \leq n_*$,
and in particular $T^{-n}(W)$ has an $n$-good subleaf for each $0 \leq n \leq n_*$.

Then every unstable leaf $W$ with $|W| \geq \delta$ has an $n_\delta$-good subleaf $W_1$.
A further subleaf $T^{-n_\delta} ( T^{n_\delta}(W_1) \cap T^{-n'}(R'))$ is automatically $(n_\delta + n')$-good,
and similarly if we use $R'', n''$ instead of $R',n'$.
Taking a suitable sequence of subleaves, and using that $\gcd \{n', n''\} = 1$,
we can obtain an $n$-good subleaf $W_n \subset W$ for every sufficiently large $n$.
A further subleaf $T^{-n} (T^n(W) \cap T^{-(p + 1) L}(D))$ is $(n + (p + 1) L)$-good and
its image under $T^{n + (p + 1) L}$ fully crosses not only $D$ but its thinner u-subrectangle
$T^p(D \cap T^{-p}(D'))$. With our choice of $\delta_3$ we have verified~(\ref{A:D-crossing}).

Finally,~\eqref{A:D-rD} follows from our choice of $p$ and $\delta_3$.

%%%%%%%%%%%%%%%%%%%%%%%

\subsection{Tree of unstable leaves}

In this section we use~(\ref{A:hprime})--(\ref{A:D}) to construct a tree of unstable leaves
which will serve as a basis for a symbolic model of the dynamics.
We start with an unstable leaf $\wroot$ which fully crosses $D$ and terminates at
the stable boundaries of $D$.

\begin{defn}
    We say that an unstable leaf $W$ \emph{properly crosses} $D$ if $W$ fully crosses $D$ and
    $d(W, \partial_u D) \geq \delta_3 / 2$.
\end{defn}

\begin{defn}
    For the map $T$, a \emph{cylinder} of length $n$ is a domain of continuity of $T^n$
    (an element of $\cM_n$),
    in other words the set of points which follow a given topological itinerary for $n$ steps.
    A concatenation $AB$ of cylinders $A$ and $B$ of lengths $n$ and $m$ respectively
    is $A \cap T^{-n}(B)$, a (possibly empty) cylinder of length $n+m$ which can be thought of as
    the set of points which successively follow topological itineraries of $A$ and $B$.
\end{defn}

For an unstable leaf $W$ and $n \geq 0$, let $A_W^{-n}$ denote the cylinder of length $n$
containing $T^{-n}(W)$, and let $A_W^n = T^n(A_W^{-n})$ be the cylinder of length $n$ for $T^{-1}$
containing $W$.

Recall that $B_n$ denotes the $b_0 \Lambda^{-n}$-neighborhood of $\cS_1$.
Let $\partial B_n$ denote its boundary.

We construct a tree $\bW$, where nodes are unstable leaves.
The root of the tree is $\wroot$; it is the only node of height $0$.
We use $\bW_n$ to denote the set of nodes at height $n$, $n \ge 0$.
For $W \in \bW_n$, its children are obtained as follows.
We apply $T$ to $W$ and erase the intersections of $T(W)$ with $B_{n+1}$.
Each leaf which properly crosses $D$, we cut at the stable boundaries of $D$, creating at most
three components.
The resulting leaves become children of $W$ and elements of $\bW_{n+1}$.

Some nodes in $W \in \bW_n$, $n \geq 1$, we designate as \emph{prime}:
\begin{itemize}
    \item If $W$ properly crosses $D$ and has no prime ancestor, then $W$ is prime.
    \item If $W$ properly crosses $D$ and has a prime ancestor,
        let $n-k$ be the height of its most recent prime ancestor (with smallest $k$).
        If there is no node in $U \in \bW_k$ such that $D \cap A_W^k \cap U \neq \emptyset$,
        then $W$ is prime. (See Figure~\ref{fig:prime def} for two ways in which this may occur.)
    \item Otherwise $W$ is not prime.
\end{itemize}

\begin{figure}[ht]
    \begin{tikzpicture}[x=8mm,y=8mm]

	\draw (0,0) rectangle (6,4);
	% W'
        \draw (0,1) to[out=10, in=180] (2,1.2) to[out=0, in=180] (4,1.1) to [out=0, in=190] (6,1.2);
	% W
        \draw[thick] (0,2.4) to[out=-10, in=180] (2.2, 2.2) to[out=0, in=180] (3.8,2.4) to[out=0, in=170] (6,2.2);

        % V_1'
	\draw[dashed] (4.0,2.15) to[out=-55, in=120] (5.0,0.8) to[out=-60, in=100] (5.4,-0.2); 
        \draw[dashed] (3.5,4.2) to[out=-80, in=90] (3.8, 3) to[out=-90, in=100] (4.0,2.15) to[out=-70, in=100] (4.5,-0.2);

        % U', V_2'
	\draw[dashed] (1.9,3.38) to[out=-70, in=100] (2.8,1.8) to[out=-70, in=100] (3.1,0.8) to[out=-80, in=110] (3.3,-0.2); 
        \draw[dashed] (1.8, 4.2) to[out=-90, in=100] (1.9,3.38) to (2.0,2.5) to[out=-90, in=100] (2.2,0.8) to[out=-75, in=105] (2.3,-0.2);

	\node at (3,3.5){\small $D$};
	\node at (1,2.6){\small $\wroot$};
	\node at (2.4,1.9){\small $U'$};
	\node at (0.7,0.7){\small $W'$};
	\node at (2.6,0.8){\small $V'_2$};
	\node at (4.65,0.7){\small $V'_1$};
	\node at (3.5,4.4){\small $\cS_k$};
	\node at (1.8,4.4){\small $\cS_k$};
	
	\node at (3, -0.7){\small $(a)$};

        \draw (10,0) rectangle (16,4);
        \draw (10,1.5) to[out=10, in=180] (12,1.7) to[out=0, in=180] (14,1.5) to [out=0, in=190] (16,1.7);
        \draw[dashed] (9.8,1.7) to[out=5, in=190] (11.5,1.9) to[out=10, in=200] (16.7,2.6) to[out=20, in=195] (18.3,3.1);
        \draw (17,2.5) to[out=-10, in=180] (18,2.55) to[out=0, in=190] (19,2.6);

        \node at (13,1.2){\small $W$};
        \node at (18, 2.2){\small $U$};
        \node at (17,3.2){\small $\cS_{-k}$};
        \node at (13, 3.5){\small $D$};
        
        \node at (14,-0.7){\small $(b)$};

    \end{tikzpicture}
    \caption{
        (a) The most recent prime ancestor $W' \in \bW_{n-k}$ of $W \in \bW_n$ properly crossing $D$ shown with
        singularity curves in $\cS_k$. If $T^{-k}(W) \subset V_1'$ then $A^k_W \cap T^k(\wroot) = \emptyset$.
        On the other hand, if $T^{-k}(W) \subset V_2'$ and there is $U \in \bW_k$ with $T^{-k}(U) \subset U'$,
        then $A^k_W \cap U \neq \emptyset$, but it may happen that $D \cap A^k_W \cap U = \emptyset$.
        (b) Relative positions of a prime node $W$, and $U \in \bW_k$ with $A^k_W \cap U \neq \emptyset$,
        but $D \cap A^k_W \cap U = \emptyset$.
    }
    \label{fig:prime def}
\end{figure}
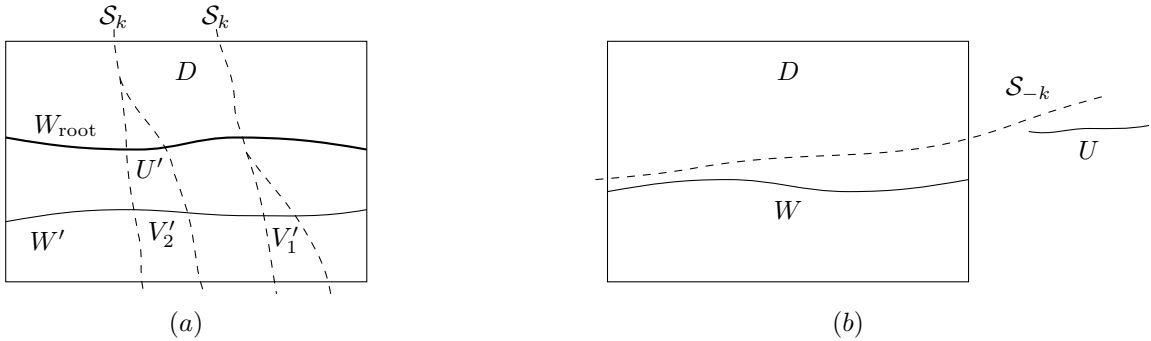

\begin{rmk}
    By construction, and in particular by the choice of $D$ such that $D \cap B_0 = \emptyset$,
    each $W \in \bW$ belongs to $M \setminus \cS_1$, thus $T$ is continuous on $W$.
\end{rmk}

Let $\cR$ be the collection of cylinders associated with all prime nodes:
\[
    \cR = \bigcup_{n=1}^\infty \{ A \in \cM_n : \mbox{$T^n(A)$ contains a prime node of height $n$} \}
    .
\]
Let $\cA$ be the collection of cylinders obtained by taking all finite concatenations of elements of $\cR$.
Let $\cR_n \subset \cR$ and $\cA_n \subset \cA$ be the subcollections of cylinders of length $n$.

\begin{rmk}
    \label{rmk:unique}
    The correspondence between $W \in \bW_n$ and cylinders $A_W^n$ is not one-to-one:
    due to the artificial cuts at $\partial_s D$, two or more nodes can correspond
    to the same cylinder.
\end{rmk}

\begin{rmk}
    \label{rmk:DR}
    Suppose that $R \in \cR_n$ and let $D_R = D \cap R \cap T^{-n}(D)$.
    Our construction ensures that $D_R$ is an s-subrectangle of $D$
    and that $T^n(D_R)$ is a u-subrectangle of $D$.
    The same then holds for $A \in \cA_n$.
\end{rmk}

\begin{defn}
    If $W \in \bW_n$ properly crosses $D$ and $A_W^{-n} \in \cA_n$, then we call 
    $W$ a {\em return node}.  Note that all prime nodes are necessarily return nodes.
\end{defn}

Recall $h' \in (0, h - s_0 \log 4)$ from~\eqref{A:hprime} and let
\begin{equation}
\label{eq:alpha def}
    \alpha = \frac{h - h'}{s_0  \log 2}
    .
\end{equation}
Observe that $\alpha >2$ under the assumption of Theorem~\ref{thm:decay}.

The crux of our argument is the following three propositions.

\begin{prop}
    \label{prop:prime}
    Every cylinder in $\cA$ has a \emph{unique} representation as a concatenation
    of cylinders in $\cR$.
\end{prop}

\begin{prop}
    \label{prop:YCb}
    There exists $C > 0$ such that for all sufficiently large $n$,
    \[
        C^{-1} e^{h n} \le \# \cA_n \le C e^{h n}
        .
    \]
\end{prop}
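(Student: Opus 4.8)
The plan is to bound $\#\cA_n$ by decomposing each cylinder in $\cA_n$ into its unique concatenation of return cylinders from $\cR$ (using the uniqueness remark above), and then counting: if $A = A_1 A_2 \cdots A_m$ with $A_i \in \cR_{n_i}$ and $\sum n_i = n$, the count of such $A$ is controlled by $\sum_{m} \sum_{n_1 + \cdots + n_m = n} \prod_i \#\cR_{n_i}$, which is the coefficient of a power series identity $\sum_n \#\cA_n z^n = \bigl(1 - \sum_n \#\cR_n z^n\bigr)^{-1}$ (formally). So the real content is an estimate on $\#\cR_n$, the number of return cylinders of length $n$, which I would extract from the tree construction: a return cylinder of length $n$ corresponds to a return node of height $n$ in $\bW$, so $\#\cR_n$ is at most the number of return nodes at height $n$. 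The key point will be that return nodes are \emph{sparse} relative to $e^{hn}$: between a grey node and its red descendant making a proper return, the leaf must have grown long (length $\geq \delta_1/3$, by~(\ref{A:crossing-D})), and the cuts at $\partial B_{n+1}$ and $\partial B_{n+1-k}$ force any intervening behavior to be governed by the slow-growth bound~(\ref{A:super-growth}) near singularities combined with the subexponential count~(\ref{A:hprime}) for short leaves.

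First I would establish the upper bound $\#\cR_n \leq C e^{h'' n}$ for some $h'' < h$ — more precisely I expect $\#\cR_n$ to decay like $e^{(h - c/\alpha)n}$ or be dominated by a term reflecting the singularity recurrence, so that the generating function $\Phi(z) = \sum_n \#\cR_n z^n$ has radius of convergence strictly larger than $e^{-h}$ but $\Phi(e^{-h}) $ diverges or at least $\Phi(e^{-h}) \geq 1$ fails to be the relevant obstruction; actually the cleaner route is: show $\#\cR_n \leq C e^{hn} n^{-\alpha \cdot (\text{something})}$ so that $\sum_n \#\cR_n e^{-hn} < \infty$ precisely when $\alpha > 1$, and in that regime $\sum_n \#\cR_n e^{-hn} $ can be made close to a constant $<1$ (or handle the boundary case by a renewal-type argument). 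Then the upper bound $\#\cA_n \leq C e^{hn}$ follows from a standard renewal / subadditivity estimate on the convolution. For the lower bound $\#\cA_n \geq C^{-1} e^{hn}$, I would use~(\ref{A:D:lower-exp}) and~(\ref{A:lower-exp}): starting from the root leaf $w$ crossing $D$, after $N_1$ or $N_2$ steps we get $\geq C e^{hN}$ leaves properly crossing $D$ of length $\geq 3\delta_1$, hence $\geq C e^{hN}$ return nodes; iterating this yields at least $C^{-1} e^{hn}$ distinct concatenations in $\cA_n$ (up to adjusting for the remainder $n \bmod N$).

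The main obstacle I anticipate is proving the sparseness estimate on $\#\cR_n$ with the correct exponent involving $\alpha$ — i.e.\ quantifying how the cuts at the shrinking neighborhoods $B_{n+1-k}$ of $\cS_1$, together with the bound $|T^n(W)| \leq C_2 |W|^{2^{-s_0 n}}$ from~(\ref{A:super-growth}), limit the number of red nodes that can proliferate before a proper return is completed. This requires carefully tracking, for each return node of height $n$, the "cost" in terms of passages near $\cS_1$: each time a red leaf enters $B_{n+1-k}$ it is recolored grey and its subsequent contribution is throttled, and the total number of ways to interleave such near-singularity passages with ordinary growth over $n$ steps must be shown to be $O(e^{h'n})$ or better, so that return nodes contribute only a subexponential factor beyond $e^{(h - \text{gap})n}$. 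This is presumably where the hypothesis $\alpha > 1$ and the choice $h' < h - s_0\log 4$ in~(\ref{A:hprime}) enter decisively, and where Proposition~\ref{prop:YTb} (the companion estimate) does the heavy lifting; I would aim to reduce Proposition~\ref{prop:YCb} to that companion combinatorial count plus the elementary renewal bookkeeping described above.
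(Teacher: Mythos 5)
Your upper bound route is logically reversed. The generating function identity $\sum_n \#\cA_n z^n = \bigl(1 - \sum_n \#\cR_n z^n\bigr)^{-1}$ gives $\#\cA_n \asymp e^{hn}$ only once one knows $\sum_n \#\cR_n\, e^{-hn} = 1$ exactly; but that equality is Proposition~\ref{prop:works} in the paper, and its proof \emph{uses} Proposition~\ref{prop:YCb} as input (via Lemma~\ref{lem:Rp}). Merely having $\sum_n \#\cR_n\, e^{-hn} < \infty$ from $\alpha > 1$ does not locate the sum relative to $1$: if it were $<1$ the generating function would be finite at $e^{-h}$, forcing $\#\cA_n = o(e^{hn})$ along a subsequence, so the lower bound of the Proposition would fail; if it were $>1$, $\#\cA_n$ would grow strictly faster than $e^{hn}$. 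So the renewal bookkeeping cannot run before the direct count is in hand. The paper's upper bound is much more elementary: each element of $\cA_n$ corresponds to a distinct return node of height $n$ in $\bW$, each element of $\cG_n(w)$ contains at most one return node, and~(\ref{A:upper-exp}) bounds $\#\cG_n(w) \le C_2 e^{hn}$.

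For the lower bound, you correctly identify~(\ref{A:lower-exp}) and~(\ref{A:crossing-D}) as sources of $\gtrsim e^{hn}$ leaves eventually crossing $D$, but you omit what is actually the hard part: a subleaf of an element of $\cG_n(w)$ that crosses $D$ after $N_2$ more steps need not be a \emph{return node} of $\bW$, because the tree discards pieces that fall into the shrinking neighborhoods $B_k$ of $\cS_1$ and makes additional cuts at $\partial B_m$. The paper's Lemmas~\ref{lem:countbound} and~\ref{lem:countdiscard} exist precisely to show that the leaves affected by these cuts and discards are an arbitrarily small fraction of $\cG_n(w)$, so that $\gtrsim e^{hn}$ clean survivors remain. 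The hypothesis $\alpha > 1$ is used exactly here — in Lemma~\ref{lem:countdiscard}, to make the series over discarded pieces summable — and not in a renewal estimate on the $\cR_n$. Your proposed naive iteration ``$\geq Ce^{hN}$ return nodes at height $N$, then iterate'' also does not produce returns at a prescribed height $n$; the paper instead takes $Ce^{hn}$ good leaves in $\cG_n(w)$ directly and appends $N_2$ steps, yielding $\#\cA_{n+N_2} \ge Ce^{hn}$, which suffices. So the core bookkeeping you sketch is replaceable by the paper's direct counts, but the survival-of-good-leaves estimate (where $\alpha > 1$ really enters) is missing and is the substance of the proof.
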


\begin{prop}
    \label{prop:YTb}
    There exists $C>0$ such that $\# \cR_n \leq C e^{h n} / n^\alpha$ for all $n \ge 1$.
\end{prop}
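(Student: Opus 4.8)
The plan is to bound $\#\cR_n$ by tracking how the growth of unstable leaves is ``slowed down'' every time a leaf passes near the singularity set $\cS_1$, which is exactly the phenomenon encoded by cuts at the $\partial B_j$ in the construction of $\bW$. The key point is that a return node of height $n$ in $\bW$ originates from a red ancestor at some height $k$, and in between the leaf must stay outside $B_{n+1-k}$; by~(\ref{A:transverse}) and~(\ref{A:super-growth}), each visit to a $b_0\Lambda^{-j}$-neighborhood of $\cS_1$ costs a factor $2^{-s_0}$ (roughly) in the effective expansion rate, so a long ``first'' excursion before a proper return forces many such near-tangential passages.

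The first step I would take is to make precise the combinatorial statement: a return node of height $n$ corresponds to a cylinder $A \in \cR_n$ such that, along its defining itinerary, $T^j(w)$ avoided $B_1, \dots, B_n$ (for the grey phase below $\on_0$) and then the red phase avoided $B_{n+1-k}$ until the return. I would then count these itineraries by grouping return nodes according to how many singularity layers $B_j$ they are forced to skirt. Concretely, if a leaf of length $\sim\delta_3$ requires $n$ steps to make its proper return to $D$ while avoiding the shrinking neighborhoods $B_j$, then by iterating~(\ref{A:super-growth}) together with~(\ref{A:transverse}) one gets that the number of connected components produced over those $n$ steps is at most $C e^{hn} \cdot 2^{-s_0 m}$ where $m$ counts the (weighted) number of close approaches to $\cS_1$, and a geometric/counting argument shows $m \gtrsim \log n$ for a proper return of length $n$ (since the neighborhoods are $b_0\Lambda^{-j}$ and a leaf must escape them before crossing $D$). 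This gives $\#\cR_n \le C e^{hn} 2^{-s_0 \cdot c\log_2 n} = C e^{hn} n^{-c s_0}$, and one must check that the constant $c$ works out to $(h-h')/(s_0 \log 2) = \alpha$ — i.e., that $\#\cR_n \le C e^{hn} n^{-\alpha}$. I would organize this via an induction on $n$ using the recursive tree structure: express the number of height-$n$ nodes in terms of height-$(n-1)$ nodes, isolate the ``new'' cuts at $\partial B_n$ and $\partial B_{n-k}$, and feed in~(\ref{A:hprime}) (with $h'$) to control the growth between cuts on the scale $\delta'$.

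The main obstacle, I expect, is getting the exponent exactly right — that is, showing the accumulated ``penalty'' from near-tangential passages is precisely $n^{-\alpha}$ with $\alpha = (h-h')/(s_0\log 2)$ and not merely some $n^{-c}$. This requires carefully coupling two rates: the exponential growth $e^{hn}$ of components (from~(\ref{A:upper-exp})) against the sub-exponential length-shrinkage $|W|^{2^{-s_0 n}}$ near $\cS_1$ (from~(\ref{A:super-growth})), and converting the length deficit into a deficit in the \emph{count} of components that can subsequently grow back to crossing-size, which is where the factor $\log 2$ (the ``$|TW|\sim|W|^{1/2}$'' square-root blow-up) enters. One has to be careful that the cuts introduced for irreducibility (at $\partial B_{n+1-k}$, indexed by the earliest red ancestor) do not destroy this bookkeeping: they only \emph{increase} the number of cuts, hence \emph{decrease} $\#\cR_n$, so they are harmless for the upper bound, but one must verify this monotonicity cleanly. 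A secondary technical point is handling the initial grey segment of height $<\on_0$ uniformly, which contributes only a bounded multiplicative constant and can be absorbed into $C$.

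Finally, I would assemble the estimate: summing over the height $k$ of the earliest red ancestor and the length of the excursion, using that $\sum_k e^{hk} \cdot (\text{bound for an excursion of length } n-k)$ is dominated by the $k$ near $n$ term, yields $\#\cR_n \le C e^{hn}/n^\alpha$. The passage from a bound on the \emph{length-}$n$ return nodes to a bound on $\#\cR_n$ is immediate since distinct return nodes give distinct cylinders (noted in the remark after Definition of return node). Throughout I would lean on the already-established equivalence in Lemma~\ref{lem:equal} and on~(\ref{A:D}) to ensure that ``proper return to $D$'' is robust enough that the thickening to s-subrectangles (Remark~\ref{rmk:rationale}) is available, though that thickening is really needed for Proposition~\ref{prop:YCb} rather than here.
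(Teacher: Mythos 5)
Your high-level intuition (cuts at $\partial B_j$ plus the square-root blow-up encoded in~(\ref{A:super-growth}) produce the $n^{-\alpha}$ penalty) is on the right track, but the proposal as written has two genuine gaps, one of which is a wrong claim at the heart of the argument.

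First, the claim that ``a long first excursion before a proper return forces many near-tangential passages, and a geometric/counting argument shows $m \gtrsim \log n$'' is not correct. A leaf can spend arbitrarily long away from $\cS_1$ without a single close approach; nothing forces $\log n$ near-tangential events. What the actual mechanism in the paper exploits is different: \emph{one} short leaf (a grey node born inside some $B_j$) has a \emph{delayed} initial growth, because by~(\ref{A:super-growth}) and~(\ref{A:hprime}) an unstable leaf of length $\lesssim \Lambda^{-j}$ needs roughly $\tfrac{1}{s_0}\log_2 j$ iterates before its pieces reach scale $\delta'$, and during that window it has only $\sim e^{h'\cdot\frac{1}{s_0}\log_2 j}$ descendants rather than $\sim e^{h\cdot\frac{1}{s_0}\log_2 j}$. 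This single deficit, applied at the \emph{last} time a short grey node appears (at height roughly $n - \tfrac{1}{s_0}\log_2 n$), is what contributes the factor $n^{-(h-h')/(s_0\log 2)} = n^{-\alpha}$. No ``accumulation over many passages'' occurs, and an induction on $n$ trying to isolate ``new cuts at $\partial B_n$'' will not produce this.

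Second, and independently, your plan has no mechanism to make the sum over the height $k$ of the red ancestor converge. In the paper's proof the crucial missing ingredient is the analogue of Lemma~\ref{lem:frh}: for any grey node, the number of \emph{first} return descendants at height $k$ later grows at rate $e^{kh''}$ with $h'' < h$ strictly. This uses~(\ref{A:crossing-D}) and~(\ref{A:D:lower-exp}) to show that a definite fraction of mass is captured by $D$ every $O(1)$ steps, so ``not yet returned'' is a subexponentially rare event at rate strictly below $h$. Without this, when you assemble the estimate by ``summing over the height $k$ of the earliest red ancestor and the length of the excursion,'' every term is order $e^{hn}$ and the sum diverges. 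Your proposal never addresses why the count of itineraries that have \emph{not yet returned} should be exponentially smaller than the total count $e^{hn}$, and that is precisely the nontrivial part of the proof. The observation that the irreducibility cuts can only decrease $\#\cR_n$ is correct but does not help with either of these two points.
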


\begin{rmk}
    The overall number of cylinders of length $n$ grows as $e^{h n}$ up to a multiplicative constant
    by \eqref{eq:exact}.
    Informally, Proposition~\ref{prop:YCb} shows that $\cA$ ``sees'' the full topological pressure,
    Proposition~\ref{prop:YTb} gives a weak bound on the pressure at infinity, and
    Proposition~\ref{prop:prime} shows that cylinders in $\cR$ are
    a prime basis of $\cA$.
\end{rmk}

\begin{rmk}
    In fact, Propositions~\ref{prop:YCb} and~\ref{prop:YTb} hold as well for all $h > s_0 \log 2$
    (i.e.\ under the sparse recurrence condition~\eqref{eq:sparse}) and $h'$ small enough so that $\alpha>1$.
    However, we work with $\alpha>2$ in light of the rate of decay of correlations specified
    by Theorem~\ref{thm:decay}.
\end{rmk}
    
In the remainder of this section we prove Propositions~\ref{prop:prime},~\ref{prop:YCb} and~\ref{prop:YTb}.
We use $C$ to denote various constants which depend continuously
(only) on the constants from (\ref{A:hprime})--(\ref{A:D})
as well as $\cK_{\min}$, $\cK_{\max}$, $\tau_{\min}$ from~\eqref{eq:cU},
$\Lambda$ and $C_e$ from~\eqref{eq:hyp}.

\subsection{Proof of Proposition~\ref{prop:prime}}

\begin{lemma}
    \label{lem:AW}
    For every $A \in \cA_n$ there is a unique $W_A \in \bW_n$ which fully crosses $D$ and
    such that $W_A \subset T^n(A)$.
    Explicily, $W_A = D \cap T^{n}(A) \cap T^n (\wroot)$.
\end{lemma}

\begin{proof}
    The uniqueness of $W_A$ is immediate if it exists: there cannot be two different
    elements of $\bW_n$ fully crossing $D$ and coming from the same cylinder of length $n$.

    The statement is clear if $A \in \cR$.
    Suppose that it holds for $A' \in \cA$ and $R \in \cR$. We show that it holds for $A = A' R$;
    this proves the result in general by induction.

    Suppose that $R$ has length $r$ and $A'$ has length $n'$.
    Let $D_R = D \cap R \cap T^{-r}(D)$ be the s-subrectangle of $D$ as in Remark~\ref{rmk:DR}.
    Then $W_{A'}$ fully crosses $D_R$, and by the construction of $\bW$
    it is enough to check that for each endpoint $x$ of $W_{A'} \cap D_R$,
    \begin{equation}
        \label{eq:tdk}
        d(T^k(x), \cS_1)
        \geq b_0 \Lambda^{-(k + n')}
        \quad \text{for all} \quad
        0 \leq k \leq r
        .
    \end{equation}
    Indeed, let $\gamma$ be the stable leaf on the boundary of $D_R$ containing $x$, and let
    $y = \gamma \cap \wroot$.
    Since $R$ is prime with length $r$, for $0 \leq k \leq r$,
    \[
        d(T^k(y), \cS_1)
        \geq b_0 \Lambda^{-k}
    \]
    by the construction of $\bW$, and 
    \[
        d(T^k(x),T^k(y))
        \leq |T^k(\gamma)| \leq b_0 \Lambda^{-k} / 2
    \]
    by~\eqref{A:D-narrow}.
    Now,~\eqref{eq:tdk} follows from the triangle inequality, $n' \geq r_D$ and~\eqref{A:D-rD}
    since $\Lambda^{-n'} \le 1/2$.
\end{proof}

\begin{proof}[Proof of Proposition~\ref{prop:prime}]
    Suppose that $R A = R' A'$ with $R, R' \in \cR$, $A, A' \in \cA$ and $R \neq R'$.
    We show that this is impossible, which implies the desired result.

    Denote the lengths of $R, R'$ and $A, A'$ by $r,r'$ and $n,n'$ respectively.
    It is impossible that $r= r'$, so we suppose that $r < r'$.

    Using Lemma~\ref{lem:AW}, let $W_{R}, W_{R'}, W_A, W_{A'}, W_{R A}$ and $W_{R' A'}$
    be the nodes of $\bW$ corresponding to the respective cylinders.
    Observe that $W_{R A}$ is a descendant of $W_{R}$ and $W_{R' A'}$ is a descendant of
    $W_{R'}$. At the same time, $R A = R' A'$ implies $W_{R A} = W_{R' A'}$,
    so $W_{R'}$ is a descendant of $W_R$.

    Now, $T^{-n}(W_A) \subset \wroot$ and $T^{-n}(W_{R A}) \subset W_R$ are
    unstable leaves contained in $A \cap D$ whose images under $T^n$ fully cross $D$.
    In particular, there is a stable leaf $\gamma \subset D$ which connects interior points of
    $T^{-n}(W_A)$ and $T^{-n} (W_{R A})$.
    %Since $W_A \in \bW_n$, for every $0 \leq k \leq n$
    %there exists a node in $\bW_k$ containing $T^{-n + k}(W_A))$
    %and thus also intersecting $T^k(\gamma)$.

    Let $U \in \bW_{r + k}$ be the first prime descendant of $W_R$ on the line of ancestors of $W_{RA}$.
    Since $W_{R'}$ is prime, $U$ is well defined and $k \leq r' - r \leq n$.
    Since $U$ is prime, there is no element of $\bW_k$ which intersects $D \cap A_U^k$.
    At the same time, $U$ is an ancestor of $W_{R A}$, so $U$ intersects $T^k(\gamma)$.
    Moreover, $T^k(\gamma) \subset D$ due to~\eqref{A:D-crossing},~\eqref{A:D-rD}
    and $k \geq r_D$.
    This is a contradiction because $T^k(\gamma)$ contains a point of a node of $\bW_k$.
\end{proof}

\subsection{Proof of Proposition~\ref{prop:YCb}}

We will find it convenient to record the following consequence of \eqref{A:hprime}.
\begin{lemma}
    \label{lem:extend F1}
    Let $W$ be an unstable leaf. Then
    \[
        \max \{|U| : U \in \cG_k(W), \; 0 \leq k \leq n \} \leq \delta'
        \qquad \text{implies} \qquad
        \# \cG_{n}(W) \leq e^{(n + N') h'}
        .
    \]
\end{lemma}

\begin{proof}
    Clearly, \eqref{A:hprime} implies $\# \cG_{n}(W) \leq e^{n h'}$ if $n$ is a multiple of $N'$.
    We extend it to general $n$ by writing $\# \cG_{n}(W) \leq \# \cG_{m}(W)$
    where $m$ is a multiple of $N'$ with $n \leq m < n + N'$.
\end{proof}

For an unstable leaf $W$, a subset $X \subset M$ and a point $x \in M$, let
\[
    \Delta_W^X (x)
    = \{ y \in W : \text{there exists a stable curve } \gamma \subset X \text{ with } x,y \in \gamma \}
    .
\]
For $E \subset M$ let $\Delta_W^X(E) = \cup_{x \in E} \Delta_W^X(x)$.
Informally, $\Delta_W^X(E)$ represents the projection of $E$ onto $W$ along stable curves in $X$.

\begin{rmk}
    Preimages of stable curves, on which $T^{-1}$ is continuous, are stable curves, so 
    if $T$ is continuous on $X$, then
    \begin{equation}
        \label{eq:DeltaT}
    \Delta_{T(W)}^{T(X)} (T(x))
        \subset T (\Delta_W^X(x))
        .
    \end{equation}
\end{rmk}

Denote
\begin{equation}
    \label{eq:En}
    E_n = \wroot \cap T^{-n} \partial B_n
    .
\end{equation}
These are points on $\wroot$ where cuts may occur in the construction of $\bW_n$
when we discard the pieces which fall in $B_n$.

\begin{defn}
    We say that an unstable leaf $W$ crosses $D$ \emph{extra properly} if
    it fully crosses $D$ with $d(W, \partial_u D) \geq \delta_3$.
    We way that $W \in \bW_n$ is \emph{regular} if:
    \begin{enumerate}[label=(\alph*)]
        \item $W$ fully crosses $D$,
        \item $T^n(D) \cap A_W^n$ contains an unstable leaf which crosses $D$ extra properly,
        \item $\Delta_W^{D \cap A_W^k} (T^k(E_j)) = \emptyset$
            for all $0 \leq j \leq k \leq n$.
    \end{enumerate}
\end{defn}

\begin{lemma}
    \label{lem:WB0}
    Suppose that $W \in \bW_n$, $n \geq 1$, is regular. Then $A_W^{-n} \in \cA_n$.
\end{lemma}

\begin{proof}
    The result is trivially true if $W$ is prime, or if $n=1$ because a regular $W \in \bW_1$ is
    necessarily prime.
    The proof continues by induction in $n$: we assume that the result
    holds for all $n < N$ and we aim to prove it for $n = N$.

    Without loss of generality we assume that $W$ is not prime.
    
    Observe that $W$ has a prime ancestor: otherwise $W$ would have to be prime by construction.
    Let $V \in \bW_{n_V}$ be the most recent prime ancestor of $W$ and set $n' = n - n_V$.
    
    Let $U = D \cap A_W^{n'} \cap T^{n'} (\wroot)$.
    Note that $U$ is nonempty, moreover there is $W' \in \bW_{n'}$
    with $W' \cap U \neq \emptyset$: otherwise $W$ would be prime.
    
    We claim that $W'$ is regular.
    Then by the assumption of induction, $A_{W'}^{-n'} \in \cA$ and
    $A_W^{-n} = A_{V}^{-n_V} A_{W'}^{-n'} \in \cA$ as required.

    It remains to verify the claim.

    First we show that
    $\Delta_{W'}^{D \cap A_{W'}^k}(T^k(E_j)) = \emptyset$ for all $0 \leq j \leq k \leq n'$.
    Indeed, if that fails for some $j,k$, then there is a
    stable curve $\gamma \subset D \cap A_{W'}^k$ connecting $W'$ to a point in $T^k(E_j)$.
    Since $W, W' \subset A_{W'}^k = A_W^k$ and $W$ fully crosses $D$, we can extend $\gamma$
    so that it reaches $W$ while staying within $D \cap A_W^k$.
    This contradicts the regularity of $W$, namely $\Delta_W^{D \cap A_W^k} (T^k(E_j)) = \emptyset$.

    In particular, $W' \cap T^{\ell} (\partial B_{n' - \ell}) = \emptyset$ for all $0 \leq \ell \leq n'$.
    Since $W'$ is nonempty and can terminate only at $T^\ell (\partial B_{n' - \ell})$ or $\partial_s D$,
    we conclude that $W'$ fully crosses $D$.

    Using regularity of $W$, let $Q \subset T^n(D) \cap A_W^n$
    be an unstable leaf which crosses $D$ extra properly.
    Let $P$ be the rectangle with unstable boundaries $D \cap W$ and $D \cap Q$.
    Then $T^{-k}(P)$ is a rectangle for all $0 \leq k \leq n$
    and $T^{-n}(P) \subset D$. Moreover, $T^{-n'}(P) \subset D$ because $T^{-n'}(W) \subset V$
    which is a prime node of $\bW_{n'}$.
    Thus $T^{n'} (D) \cap A_{W'}^{n'}$ contains the whole $P$, and in particular $Q \cap D$
    which crosses $D$ extra properly.

    The claim is verified and the proof is complete.
\end{proof}

For an unstable leaf $W$ we denote by $W^{\delta}$ the (possibly empty) part of $W$ obtained
by deleting the $\delta$-neighborhood of each endpoint of $W$.

\begin{lemma}
    \label{lem:Weps}
    Let $\eps, \delta > 0$. For all sufficiently small $b_0$,
    \[
        \# \{ W \in \cG_n(\wroot) : \Delta_{W^\delta}^{D \cap A_W^k}(T^k(E_j)) \neq \emptyset
        \text{ for some } 0 \leq j \leq k \leq n \}
        \leq \eps e^{h n}
        .
    \]
\end{lemma}

\begin{proof}
    In this proof the generic constants $C$ do not depend on $b_0$.
    
    For $x \in M$ denote
    \[
        q_{n,k}(x)
        = \# \{ W \in \cG_n(\wroot) : \Delta_{W^\delta}^{D \cap A_W^k}(x) \neq \emptyset \}
        .
    \]
    If $W$ is as in the right hand side above, then $W$ is contained in the same
    element of $\cM_{-k}$ as $x$. That is, the itinerary of $T^{-j} (W)$, $0 \leq j \leq k$, is fixed by $x$.
    Hence
    \begin{equation}
        \label{eq:qnk}
        q_{n,k}(x)
        \leq \# \cG_{n-k}(\wroot)
        \leq C e^{h (n-k)}
        .
    \end{equation}
    Suppose $W$ is an unstable leaf and $y \in \Delta_W^{D \cap A_W^k}(T^k(E_j))$ with $j \le k$.  Let
    $\gamma$ denote a stable curve in $D \cap A_W^k$ with one endpoint at $y$ and the other in $T^k(E_j)$.
    Then $T^{-k}$ is continuous on $\gamma$ and $T^{-k}(\gamma)$ is still a stable curve with length
    $|T^{-k}(\gamma)| \le C$ and $T^{-(k-j)}(\gamma)$ is a
    stable curve connecting $T^{-(k-j)}(y)$ with $T^j(\wroot) \cap \partial B_j$.  
    It follows from~\eqref{eq:hyp} that
    \[
        |T^{-(k-j)}(\gamma)|
        \le C \Lambda^{-j}
        .
    \]
    On the other hand, since
    $\gamma \subset D$, by~\eqref{A:D-narrow} we have $|\gamma| \le C_e b_0/2$, and so by~\eqref{A:super-growth} applied to stable
    curves, 
    \[
        |T^{-(k-j)}(\gamma)|
        \le C b_0^{2^{-s_0(k-j)}}
        .
    \]
    Putting the two upper bounds on $|T^{-(k-j)}(\gamma)|$ together
    (namely using the inequality $\min \{ A, B \} \le \sqrt{AB}$) 
    and using the triangle inequality, we conclude that
    \[
        d(T^{-(k-j)}(y), \cS_1) \le C b_0^{2^{-s_0(k-j)-1}} \Lambda^{-j/2}
        .
    \]

    Using~\eqref{A:super-growth} again, 
    the distance from $y$ to the closest endpoint of $W$ is at most
    \[
        d_{j,k}
        = C t_{j,k}(b_0) (\Lambda^{-j})^{2^{-s_0 (k-j)-1}}
    \]
    where $t_{j,k}(b_0)$ are some nonnegative functions bounded above by $1$ with 
    $\lim_{b_0 \to 0} t_{j,k}(b_0) \to 0$ for each $j,k$.
    Choose a (sufficiently large) $R > 0$ independent of $b_0$ so that
    \[
        d_{j,k} < \delta
        \ \text{and hence} \ 
        q_{n,k}(T^k (E_j)) = 0
        \quad \text{whenever} \quad
        k - s_0^{-1} \log_2 k + R \leq j \leq k
        .
    \]
    Let $N \geq 1$ large (depending on $\eps$, see below) and choose $b_0$ sufficiently small so that
    \[
        d_{j,k} < \delta
        \ \text{and hence} \ 
        q_{n,k}(T^k(E_j)) = 0
        \quad \text{whenever} \quad
        j \leq k \leq N
        .
    \]
    Using $\# E_j \leq C e^{h j}$ and~\eqref{eq:qnk},
    \[
        \sum_{0 \leq j \leq k \leq n} \sum_{x \in T^k(E_j)} q_{n,k}(x)
        \leq C \sum_{\substack{N < k \leq n  \\  0 \leq j \leq k - s_0^{-1} \log_2 k + R}} e^{h j + h (n-k)}
        \leq C e^{hn} \sum_{N < k \leq n} k^{-\frac{h}{s_0 \log 2}}
        .
    \]
    The constants $C$ do not depend on $b_0$ or $N$, and $h > s_0 \log 2$, thus
    we obtain the desired result by choosing $N$ large (and $b_0$ small).
\end{proof}

\begin{lemma}
    \label{lem:countdiscard}
    Let $\eps, \delta > 0$. For all sufficiently small $b_0$,
    \begin{equation}
        \label{eq:countdiscard}
        \# \{ W \in \cG_n(\wroot) : W^\delta \cap T^{k} (B_{n-k}) \neq \emptyset
            \text { for some } 0 \leq k \leq n
        \}
        \leq \eps e^{hn}
        .
    \end{equation}
\end{lemma}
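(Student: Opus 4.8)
The plan is to follow the scheme of Lemma~\ref{lem:countbound}, with one new quantitative input: a bound on $\#\cG_k(V)$ for exponentially short unstable leaves $V$, which is exactly where the hypothesis $\alpha>1$ enters. First I would trace a ``bad'' leaf back: if $W\in\cG_n(w)$ satisfies $W^\delta\cap T^k(B_{n-k})\neq\emptyset$ for a given $k$, then, writing $m=n-k$, there are $U\in\cG_{m}(w)$ and a connected component $V$ of $U\cap B_{m}$ such that $W$ is the $T^k$-image of a component of $U\setminus\cS_k$ that meets $V$. By~\eqref{A:transverse}, $|V|\le C_4 b_0\Lambda^{-m}$ and $U$ has at most $N_\cS+1$ such components $V$; since $T^k$ is injective where continuous, the number of distinct $W$ produced from a single $V$ is at most $\#\cG_k(V)$. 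Hence
\[
    \#\{W\in\cG_n(w):W^\delta\cap T^k(B_{m})\neq\emptyset\}
    \ \le\ (N_\cS+1)\,\#\cG_m(w)\cdot\Phi_k
    \ \le\ (N_\cS+1)C_2\,e^{hm}\,\Phi_k,
\]
where $\Phi_k:=\max\{\#\cG_k(V):V\text{ unstable leaf},\ |V|\le C_4 b_0\Lambda^{-m}\}$, so the quantity in the statement is $\le\sum_{k=0}^n(N_\cS+1)C_2 e^{h(n-k)}\Phi_k$.

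Next I would estimate $\Phi_k$. By~\eqref{A:super-growth}, every $Z\in\cG_j(V)$ satisfies $|Z|\le C_2|V|^{2^{-s_0 j}}\le C_2(C_4 b_0\Lambda^{-m})^{2^{-s_0 j}}$, and since $|V|<1$ this increases with $j$; thus it stays below $\delta'$ for all $j\le g$, where $g=g(m)$ is the largest integer with $C_2(C_4 b_0\Lambda^{-m})^{2^{-s_0 g}}\le\delta'$, and a direct computation gives $g(m)\ge\frac{1}{s_0\log 2}\log\bigl(\log(1/(C_4 b_0))+m\log\Lambda\bigr)-C$. Then~\eqref{A:hprime} gives $\#\cG_g(V)\le C_1 e^{gh'}$, and applying~\eqref{A:upper-exp} to each of these at most $\delta'$-long leaves,
\[
    \#\cG_k(V)\ \le\ C_1 e^{gh'}\cdot C_2 e^{(k-g)h}\ =\ C\,e^{kh}e^{-g(h-h')}\ \le\ C\,e^{kh}\bigl(\log(1/(C_4 b_0))+m\log\Lambda\bigr)^{-\alpha}\qquad(k\ge g),
\]
using $\alpha=\frac{h-h'}{s_0\log 2}$, while $\#\cG_k(V)\le C_1 e^{kh'}$ trivially for $k<g$. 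For the indices with $k\ge g(n-k)$, the display above therefore bounds the contribution by $Ce^{hn}\sum_{m\ge 0}(\log(1/(C_4 b_0))+m\log\Lambda)^{-\alpha}$; since $\alpha>1$ this series converges, and comparison with $\int_0^\infty(\log(1/(C_4 b_0))+t\log\Lambda)^{-\alpha}\,dt$ shows its sum tends to $0$ as $b_0\to 0$, so this part is $\le\tfrac12\eps e^{hn}$ once $b_0$ is small.

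It remains to treat $k<g(n-k)$, which forces $k$ to be small (of order $\log\log(1/b_0)+\log n$). I would fix a large threshold $N''=N''(\eps,h-h')$; for $N''<k<g(n-k)$ the crude bound $\#\cG_k(V)\le C_1 e^{kh'}$ already yields a contribution $\le Ce^{hn}\sum_{k>N''}e^{-k(h-h')}\le\tfrac14\eps e^{hn}$. For $k\le N''$, I would argue exactly as in the ``$k\le N''$'' step in the proof of Lemma~\ref{lem:countbound}: by~\eqref{A:super-growth} the diameter of $T^k(V)$ is at most $C_2(C_4 b_0\Lambda^{-(n-k)})^{2^{-s_0 N''}}$, which is $\ll\delta$ once $b_0$ is small, and then $W^\delta$ cannot meet $T^k(B_{n-k})$, so there are no bad leaves in this range (for the finitely many small $n$ not otherwise covered, the count is $0$ when $b_0$ is small). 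Choosing first $N''$ large and then $b_0$ small enough for all of the above gives the bound $\le\eps e^{hn}$.

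The hard part will be the $\Phi_k$ estimate — extracting the exact exponent $\alpha$ from~\eqref{A:super-growth}: one must show that a leaf of length $\sim b_0\Lambda^{-m}$ needs roughly $\frac{1}{s_0\log 2}\log m$ iterations before its pieces can grow back to size $\delta'$, so that the exponential growth ``lost'' over those iterations is $e^{-g(h-h')}\sim m^{-\alpha}$; the condition $\alpha>1$ is precisely what makes the resulting sum over $k$ (equivalently over $m=n-k$) finite and, with $b_0$ small, arbitrarily small. A secondary difficulty is patching the three regimes $k\le N''$, $N''<k<g(n-k)$, $k\ge g(n-k)$ uniformly in $n$, the ``recent return'' regime $k$ small being the one not controlled by the growth estimates alone.
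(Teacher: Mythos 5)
Your proof is correct and follows essentially the same route as the paper's: both trace bad leaves back to short components in $B_{n-k}$, use (\ref{A:super-growth}) and (\ref{A:hprime}) to control $\#\cG_k$ for a leaf of length $\lesssim b_0\Lambda^{-m}$ up to the logarithmic threshold where pieces grow to size $\delta'$, extract the exponent $\alpha$ from the ``lost'' growth $e^{-g(h-h')}\sim m^{-\alpha}$, and sum using $\alpha>1$ together with the large-$N''$/small-$b_0$ splitting. Your explicit $g(m)\gtrsim\frac1{s_0\log 2}\log(\log(1/b_0)+m\log\Lambda)$ is just a concrete version of the paper's $\frac1{s_0}\chi(m,b_0)$ with $\chi=-C+\log_2(m+1)+\psi(b_0,m)$, and your tail sum $\sum_m(\log(1/b_0)+m\log\Lambda)^{-\alpha}$ is the same as the paper's $\sum_m e^{-\frac{h-h'}{s_0}\psi(b_0,m)}/(m+1)^{\alpha}$.
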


\begin{proof}
    As in the proof of the previous lemma, here the generic constants $C$ do not depend on $b_0$.
    
    Suppose that $W' \in \cG_{m}(\wroot)$ and denote by $W''$ one connected component of $W' \cap B_{m}$.
    By \eqref{A:transverse}, 
    \begin{equation}
        \label{eq:growth for delta}
        |T^k(W'')|
        \le C_1 |W''|^{2^{-s_0 k}} \le C b_0^{2^{-s_0 k}} \Lambda^{-m2^{-s_0 k}}
        ,
    \end{equation}
    for all $k$ such that the right hand side is less than $\delta'$, 
    where $\delta'>0$ is from \eqref{A:hprime}.
    By~Lemma~\ref{lem:extend F1} and \eqref{A:super-growth},
    $\# \cG_k(W'') \leq e^{h' (k+N')}$ for all $k \leq \frac{1}{s_0} \chi(m, b_0)$, where
    $\chi(m, b_0) = -C + \log_2 (m+1) + \psi(b_0, m)$;
    here $\psi$ is some nonnegative function with $\lim_{b_0 \to 0} \psi(b_0, m) = +\infty$ for each $m$
    but otherwise unimportant.

    Then for $k \leq \frac{1}{s_0} \chi(m, b_0)$,
    \[
        \# \{ W \in \cG_k(W') : W \cap T^{k} (B_{m}) \neq \emptyset \}
        \leq N_{\cS} \# \cG_k(W'')
        \leq C e^{h' k}.
    \]
    Setting $m=n-k$, summing over $W' \in \cG_{n-k}(\wroot)$ and using $\# \cG_{n-k}(\wroot) \leq C e^{h(n-k)}$,
    for $k \leq \frac{1}{s_0} \chi(n-k, b_0)$ we have
    \begin{equation}
        \label{eq:ddr}
        \# \{ W \in \cG_{n}(\wroot) : W \cap T^{k} (B_{n-k}) \neq \emptyset \}
        \leq C e^{h (n-k) + h' k}
        .
    \end{equation}
    Similarly, for $k > \frac{1}{s_0} \chi(n-k, b_0)$, we may apply Lemma~\ref{lem:extend F1} for the first
    $\frac{1}{s_0} \chi(n-k,b_0)$ iterates after an intersection with $B_{n-k}$ and then \eqref{A:upper-exp} after that
    to obtain,
    \begin{equation}
        \label{eq:rrd}
        \begin{aligned}
            \# \{ W \in \cG_n(\wroot) & : W \cap T^{k} (B_{n-k}) \neq \emptyset \}
            \leq C e^{h' \frac{1}{s_0}\chi(n-k, b_0) + h (n - \frac{1}{s_0}\chi(n-k, b_0))}
            \\ &
            = C e^{h n - \frac{h - h'}{s_0} (\log_2(n-k+1) + \psi(b_0, n-k)) }
            = C \frac{e^{h n - \frac{h - h'}{s_0} \psi(b_0, n-k)}}{(n-k+1)^\alpha}
            .
        \end{aligned}
    \end{equation}
    Let $N \geq 0$ be large (depending on $\eps$, see below). Taking a sum of the right hand sides of~\eqref{eq:ddr} and~\eqref{eq:rrd}
    over $N \leq k \leq n$, we have
    \begin{align*}
        \# \{ W \in \cG_{n}(\wroot) & : W \cap T^{k} (B_{n-k}) \neq \emptyset
        \text{ for some } N \leq k \leq n \}
        \\ &
        \leq C e^{h n} \biggl( e^{-(h-h') N}
            + \sum_{m=0}^\infty \frac{e^{- \frac{h-h'}{s_0} \psi(b_0, m)}}{(m+1)^\alpha}
        \biggr)
        .
    \end{align*}
    Recall that $\alpha > 1$, so the series above converges.  
    Now fix $\delta, \ve >0$.  Choose $N$ sufficiently large and $b_0$ sufficiently small so that the
    right hand side above is bounded by $\eps e^{hn}$.
    Finally, using~\eqref{eq:growth for delta}, choose $b_0$ small enough so that the left hand side of~\eqref{eq:countdiscard}
    has no elements with $k \leq N$, and~\eqref{eq:countdiscard} follows.
\end{proof}

\begin{lemma}
    \label{lem:Wreg}
    For each sufficiently small $b_0$ there exist $C_{b_0} > 0$ and $N_{b_0} \geq 1$ such that
    \[
        \# \{ W \in \bW_n : W \text{ is regular } \}
        \geq C_{b_0} e^{h n}
        \quad \text{for all} \quad
        n \geq N_{b_0}
        .
    \]
\end{lemma}

\begin{proof}
    By~\eqref{A:lower-exp}, for all sufficiently large $n$,
    \begin{equation}
        \label{eq:Wdhn}
        \# \{ W \in \cG_n(\wroot) : |W| \geq 3 \delta_1 \}
        \geq C e^{h n}
        .
    \end{equation}
    Applying Lemmas~\ref{lem:Weps} and~\ref{lem:countdiscard} with $\delta = \delta_1$
    and $\eps$ sufficiently small, we see that for all sufficiently large $n$,
    \begin{equation}
        \label{eq:WdhnL}
        \begin{aligned}
            \# \bigl\{ W \in \cG_n(\wroot) : {}
                & |W| \geq 3 \delta_1 ,
                \ W^{\delta_1} \cap T^k(B_{n-k}) = \emptyset \ \text{for all} \ 0 \leq k \leq n ,
                \\
                & \ \Delta_{W^{\delta_1}}^{D \cap A_W^k}(T^k(E_j)) = \emptyset
                    \text{ for all } 0 \leq j \leq k \leq n
            \bigr\}
            \geq C e^{h n}
            .
        \end{aligned}
    \end{equation}
    Since $|W^{\delta_1}| \geq \delta_1$, let $W' \in \cG_{N_2}(W^{\delta_1})$ be as in~\eqref{A:D-crossing}.
    Clearly, $W' \in \cG_{n + N_2}(\wroot)$.
    It is now a direct verification that $W' \in \bW_{n + N_2}$ and $W'$ is regular.
\end{proof}

\begin{proof}[Proof of Proposition~\ref{prop:YCb}]
    The upper bound on $\# \cA_n$ follows from Lemma~\ref{lem:AW} and~(\ref{A:upper-exp}).
    The lower bound follows from Lemmas~\ref{lem:WB0} and~\ref{lem:Wreg}.
\end{proof}

\subsection{Proof of Proposition~\ref{prop:YTb}}

\begin{defn}
    Suppose $W \in \bW_n$. We say that $V \in \bW_{n+k}$ is a \emph{first prime descendant} of $W$ if $V$ is a
    prime descendant of $W$ and there are no other prime nodes between $W$ and $V$.
\end{defn}

\begin{defn}
    For an unstable leaf $U$ write $U \perp \bW_k$ if there is no $Q \in \bW_k$ with $A_U^k = A_Q^k$.
\end{defn}

\begin{lemma}
    \label{lem:FP}
    There exist $h'' \in (0, h)$ and $C'' > 0$ such that the following hold.
    Suppose $W \in \bW_{n+k}$ with $n \geq 0$, $k \ge 1$, and either $n = 0$ or $n \geq 1$, the ancestor of $W$
    at height $n$ is prime and $W \perp \bW_k$.
    Then the number of first prime descendants of $W$ of height $n + k + \ell$
    is bounded by $C'' \exp e^{h'' \ell}$ for all $\ell \geq 0$.
\end{lemma}

\begin{proof}
    We only consider the case $n \geq 1$. The case $n = 0$ is similar and simpler.

    Let $W \in \bW_{n+k}$ be as in the statement of the lemma and let $\bW^W$ denote the subtree of $\bW$
    rooted at $W$. So that, for instance, $\bW^W_j$ are the descendants of $W$ in $\bW_{n+ k + j}$.

    For $U \in \bW^W_\ell$ let $\fG_j(U)$ denote the set of all descendants of $U$ in $\bW^W_{\ell + j}$,
    or $\{U\}$ when $j = 0$. Let $\brfG_j(U) \subset \fG_j(U)$ denote the collection of descendants of
    $U$ which are not prime and do not have an ancestor in $\bW^W$ that is prime, except possibly $W$ itself.
    If $U \subset \bW^W_\ell$ is not an individual node but a collection of nodes,
    define $\fG_j(U) = \cup_{u \in U} \fG_j(u)$ and $\brfG^j(U) = \cup_{u \in U} \brfG_j(u)$.

    We note that for $U \in \bW^W$, a single element $V \in \cG_j(U)$ can contain at most $N_{\cS}' = N_{\cS} + 2 N_1 + 1$ nodes in $\fG_j(U)$.
    Indeed, by the construction of the tree and~\eqref{A:transverse}, $V$ can be cut into at most $N_{\cS} + 1$
    nodes by intersections with $B_{n+k+j}$. In addition, endpoints of nodes can be generated by cuts at $\partial_s D$,
    and by~\eqref{A:D-period}, $V$ intersects $\cup_{\ell \geq 0} T^\ell(\partial_s D)$ in at most $2 N_1$ points.
    
    Hence
    \begin{equation}
        \label{eq:frak upper}
        \# \fG_j(U) \le N_{\cS}' \# \cG_j(U)
        .
    \end{equation}
 
    For $U \subset \bW^W_\ell$, define
    \[
        P(U)
        = \sup_{j \geq 0} e^{-h j} \, \# \fG_j(U)
        .
    \]

    Observe that:
    \begin{enumerate}[label=(\alph*)]
        \item\label{eq:disjoint P} $P(U \cup V) \leq P(U) + P(V)$.
        \item\label{eq:UPU} $\# U \leq P(U) \leq C_1 N_{\cS}' \# U$ by~\eqref{eq:frak upper} and~\eqref{A:upper-exp}.
        \item $P(\fG_j(U)) \le e^{j h} P(U)$.
    \end{enumerate}
       
    Choose $N = j_0 N'$, where $N'$ is from~\eqref{A:hprime} 
    and $j_0 \in \mathbb{N}$ is sufficiently large
    that $C_1 N_{\cS}' e^{(h'-h)N} < 1/2$.  Set $\brN = N + N_2$, where $N_2$ 
    is from \eqref{A:D-crossing}.  Also, using \eqref{A:lower-exp} and Proposition~\ref{prop:YCb}, 
    let $C_3>0$ be such that
    \begin{equation}
        \label{eq:Vcro}
        \text{if $V$ properly crosses } D \text{, then } \fG_j(V) \ge C_3 e^{jh} \text{ for all } j \ge 0
        .
    \end{equation}

    Now we consider two possibilities for a node $U \in \bW^W$:
    \begin{itemize}
        \item For each $V \in \fG_{j N'}(U)$ and $0 \le j \le j_0$, $|V| \le \delta_1$.
            Then applying \eqref{A:hprime} inductively for each $j \le j_0$, we get
            $\# \fG_N(U) \leq e^{h' N} N_{\cS}'$. Write
            \begin{align*}
                P(\fG_{\brN}(U))
                & \leq \sum_{V \in \fG_N(U)} P(\fG_{N_2}(V))
                \leq \# \fG_N(U) e^{h N_2} \sup_{V \in \fG_N(U)} P(V)
                \\
                & \le e^{h' N + h N_2} C_1 N_{\cS}'
                \le \tfrac{1}{2} e^{h \brN} P(U)
                .
            \end{align*}
        \item There exists $i \in [0,j_0]$ and $u \in \fG_{i N'}(U)$ such that $|u| \ge \delta_1$.
            Then by~\eqref{A:D-crossing}, there exists $V \in \fG_{i N'+N_2}(U)$ such that $V$ properly crosses $D$.
            Since $W \perp \bW_k$, $V$ is either a first prime descendant of $W$ or has an ancestor in $\bW^W$ that is such.
            For simplicity, we shall still denote by $V$ the first prime descendant of $U$.
            Subsequently, using~\eqref{eq:Vcro}, we eliminate the subtree starting at $V$ to estimate,
            \[
                \# \fG_j(\brfG_{\brN}(U)) \leq \# \fG_j(\fG_{\brN}(U)) - C_3 e^{j h}
                \quad \text{for all} \quad
                j \geq 0
                .
            \]
            Then using (c),
            \begin{align*}
                P(\brfG_{\brN}(U))
                & = \sup_{j \ge 0} e^{-jh} \# \fG_j(\brfG_{\brN}(U)) 
                \leq \sup_{j \ge 0} e^{-jh} \# \fG_j(\fG_{\brN}(U)) - C_3
                \\
                & = P(\fG_{\brN}(U)) - C_3
                \leq e^{\brN h} P(U)  - C_3
                \leq e^{\brN h} P(U) (1 - e^{- \brN h} C_3)
                .
            \end{align*}
    \end{itemize}

    In either case, with $\delta = \min \{ 1/2, e^{- \brN h} C_3 \}$, for every $U \in \bW^W$ we have
    \begin{equation}
        \label{eq:GGG}
        P(\brfG_{\brN}(U)) \leq (1-\delta) e^{h \brN} P(U)
        .
    \end{equation}
    Iterating~\eqref{eq:GGG} and using~\ref{eq:disjoint P} we see that for all $j \geq 0$,
    \[
        P(\brfG_{j \brN}(W))
        \leq (1-\delta)^j e^{h j \brN} P(W)
        .
    \]
    The result follows by~\ref{eq:UPU} since each first prime descendant of $W$ at time $\ell$ must descend from an
    element of $\brfG_{\ell-1}(W)$.
\end{proof}

\begin{lemma}
    \label{lem:firstprime nperp}
    Suppose that $n \ge 1$ and $W \in \bW_n$ is prime.  For $k \geq 1$ let
    \[
        P_k^{\not\perp}
        = \{ V \in \bW_{n + k} : V \text{ is a first prime descendant of } W \text{ with } V \not\perp \bW_k\}
        .
    \]
    Then $\# P_k^{\not\perp} \leq C e^{ h k } k^{- \frac{h}{s_0 \log_2} }$ for all $k$. In particular,
    $\# P_k^{\not\perp} \leq C e^{hk} k^{-\alpha}$.
\end{lemma}

\begin{proof}
Let $V \in P_k^{\not\perp}$ and let $A^k_V$ denote the element of $\cM_{-k}$ containing $V$.  
By uniform hyperbolicity~\eqref{eq:hyp}, the stable diameter of $A^k_V$ is at most $C \Lambda^{-k}$.
Without loss of generality, we assume $C \Lambda^{-k} < \delta_3 / 2$.
Thus since $V$ is prime, $A^k_V$ crosses $D$ fully in the unstable direction; 
in particular, $D$
divides $A^k_V \setminus D$ into two connected components, one to the left and one to the right of
$D$.
Note that
$\partial A^k_V$ comprises elements of $\cS_{-k}$ and so cannot intersect the unstable manifolds
in $D$.

Since $V \not\perp \bW_k$, there exists $U \in \bW_k$ such that
$U \subset A^k_V$ and, moreover, by definition of a prime node,
$D \cap A^k_V \cap U = \emptyset$.
Without loss of generality, suppose that the relative positioning of $U$ and $V$ is as in
Figure~\ref{fig:prime}(a).

\begin{figure}[ht]
    \begin{tikzpicture}[x=8mm,y=8mm]

        % ** a) **
        \node at (7,-.7){\small$(a)$}; 
        
        % D
        \draw (3,0) rectangle (9,4);
        \node at (6, 3.5){\small $D$};
        % V
        \draw (3,1) to[out=10, in=180] (5,1.2) to[out=0, in=180] (7,1) to [out=0, in=190] (9,1.2);
        \node at (6,.7){\small $V$};
        %\node at (9, 1.2) [circle,draw,fill,minimum width=2pt,inner sep=0pt,label={right:{\small $y$}}] {};
        % U
        \draw (11,2.35) to[out=-10, in=180] (11.75,2.4) to[out=0, in=190] (12.5,2.45);
        \node at (11.7, 2){\small $U$};
        \node at (11, 2.35) [circle,draw,fill,minimum width=2pt,inner sep=0pt,label={left:{\small $x$}}] {};
        % \gamma
        \draw[dashed] (10.8, 0.5) to[in=-100,out=90] (11, 2.35) to[in=-90, out=80] (11.2, 3.5);
        \node at (11, 3.8){\small $\gamma$};

        % ** b) **
        \node at (17.3,-0.7){\small$(b)$};
        
        % S
        \draw[thick] (16,4) to[out=-80, in=90] (16.5,2) to[out=-90, in=100] (17,.5);
        \node at (15.5,2){\small $S \in \cS_1$};

        % T^{-i}(U)
        \draw (17.5,3.5) to[out=0, in=190] (18.5,3.6) to[out=10, in=180] (19.5,3.7);
        \node at (18.8,3.2){\small $U'$};
        \node at (17.5, 3.5) [circle,draw,fill,minimum width=2pt,inner sep=0pt,label={left:{\small $x'$}}] {};

        % T^{-i}(V)
        \draw (17.0, 1.05) to[out=10, in=180] (17.5,1.12);
        \node at (17.4,0.75){\small $V'$};
        %\node at (17.5, 1.12) [circle,draw,fill,minimum width=2pt,inner sep=0pt,label={above:{\small $y'$}}] {};

        % T^{-i}(\gamma)
        \draw[dashed] (17.7, 1.0) to[out=85, in=-85] (17.5, 3.5) to[out=95, in=-90] (17.55, 4.0);
        \node at (17.5, 4.4){\small $\gamma'$};

    \end{tikzpicture}
    \caption{(a) A prime node $V$ properly crossing $D$ and $U \subset T^k(\wroot)$ contained in $A^k_V \setminus D$. 
    (b) The preimages under $T^i$ with $U' = T^{-i}(U)$, $\gamma' = T^{-i}(\gamma)$ and so on.
    Stable curves are approximately vertical and unstable curves are approximately horizontal.}
    \label{fig:prime}
\end{figure}
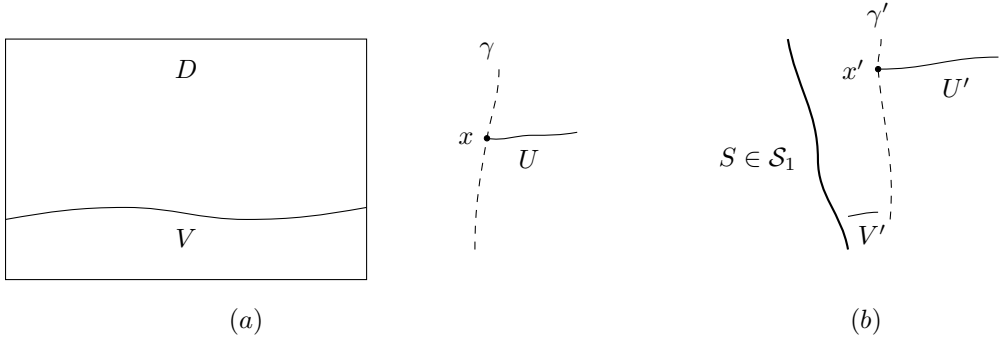

By construction, the left endpoint $x$ of $U$ belongs to $T^{i}(\partial B_{k - i})$ for some $0 < i < k$.
Let $A_i \in \cM_{-i}$ denote the element of $\cM_{-i}$ containing $U$ and $V$.
Let $\gamma \subset A_i$ be a stable curve passing through $x$, fully crossing $A_i$ and not crossing the boundaries of $D$.

Consider the preimages $U' = T^{-i}(U)$, $V' = T^{-i}(V)$, $\gamma' = T^{-i}(\gamma)$ and $x' = T^{-i}(x)$.
Since $T$ is orientation preserving, we may represent them as in
Figure~\ref{fig:prime}(b).
Note that $\gamma'$ is a stable curve fully crossing $T^{-i}(A_i)$.
Since $x \in T^{i}(\partial B_{k - i})$, there is a stable curve $S \in \cS_1$ with $d(x', S) = b_0 \Lambda^{-(k - i)}$.
Using the continuation of singularities property, suppose that $S$ is long in the vertical direction, say that it fully crosses $M$.

Since $U'$ and $V'$ belong to the same element of $\cM_{-(k- i)}$, they are close in the stable direction in the sense that
there is an unstable curve so that both $U'$ and $V'$ are in its $C \Lambda^{-(k - i)}$-neighborhood.
Using transversality of stable and unstable cones, we observe that 
\begin{equation}
    \label{eq:small V}
    %|T^{-i}(V)| =
    |V'|
    \leq C \Lambda^{-(k - i)}
    .
\end{equation}
Indeed, if $|V'| \geq C \Lambda^{-(k - i)}$ with a sufficiently large $C$, then
$V'$ has to cross $\gamma'$ which is impossible by construction.
Note that $V'$ cannot go below or above $\gamma'$ because $V' \subset T^{-i}(A_i)$ which $\gamma'$ fully crosses.

We are ready to estimate $\# P_k^{\not\perp}$.
Each $V \in P_k^{\not\perp}$ is associated with an element $U \in \bW_k$, or more conveniently to its endpoint $x$ as in Figure~\ref{fig:prime}.
Then $x \in T^k(E_{k - i})$ and $|T^{-i}(V)| \leq C \Lambda^{-(k - i)}$ for some $0 < i < k$.
For each $i$ there are at most $\# E_{k - i} \leq C e^{h (k - i)}$ options for $x$, and to each $x$ there corresponds at most one $V$.
Furthermore, by~\eqref{A:super-growth} and~\eqref{eq:small V},
\[
    |V| \leq C |T^{-i}(V)|^{2^{-s_0 i}} \leq C \Lambda^{-(k-i) 2^{-s_0 i}}
    .
\]
Since $V$ is sufficiently long to fully cross $D$,
$|V| \ge \delta_2$ by \eqref{A:D-long},
we necessarily have $(k - i) 2^{- s_0 i} \leq - \frac{\log_2(\delta_2/C)}{\log \Lambda}$,
and hence $i \geq \frac{1}{s_0} \log_2 (k - i) - C$. In particular,
\[
    i
    \geq \min \Bigl\{ j : j \geq \frac{1}{s_0} \log_2 (k - j) - C \Bigr\}
    \geq \frac{1}{s_0} \log_2 k - C
    .
\]
This allows us to bound
\[
   \# P_k^{\not\perp}
   \leq C \sum_{i \geq \frac{1}{s_0} \log_2 k - C} e^{ h (k - i) }
   \leq C e^{ h k - \frac{h}{s_0} \log_2 k }
   = C e^{ h k } k^{- \frac{h}{s_0 \log_2} }
   ,
\]
as required.
\end{proof}

The following estimate will be useful for the subsequent lemma.

\begin{lemma}
    \label{lem:eab}
    Suppose $a > 0$ and $b \in \bR$. There exists $C_{a,b} > 0$, depending only on $a,b$ such that for all $n \geq 1$:
    \[
        \sum_{j = 1}^n e^{a j} j^b
        \leq C_{a,b} e^{a n} n^b
        .
    \]
\end{lemma}

\begin{proof}
    Without loss of generality suppose that $n$ is sufficiently large so that $e^{a j / 2} j^b \leq n^b e^{a n / 2}$
    for all $1 \leq j \leq n$. Then $e^{a j} j^b \leq e^{a n} n^b e^{- a (n - j) / 2}$ and
    \[
        \sum_{j = 1}^n e^{a j} j^b
        \leq e^{a n} n^b \sum_{j=1}^n e^{- a (n - j) / 2}
        \leq e^{a n} n^b (1 - e^{-a/2})^{-1}
        .
    \]
\end{proof}

\begin{lemma}
    \label{lem:firstprime perp}
    Suppose that $W \in \bW_n$ is prime and for $k \geq 1$ let
    \[
        P_k^{\perp}
        = \{ V \in \bW_{n + k} : V \text{ is a first prime descendant of } W \text{ with } V \perp \bW_k\}
        .
    \]
    Then $\# P_k^{\perp} \leq C e^{hk} k^{-\alpha}$ for all $k$.
\end{lemma}

\begin{proof}
    We will call $U \in \bW_{n + \ell}$ a first {\em perp} descendant of $W$ if 
    (a) $U$ is a descendant of $W$, (b) $U \perp \bW_\ell$, and (c) each ancestor of $U$, 
    $U' \in \bW_{n+j}$, $j < \ell$, satisfies $U' \not\perp \bW_j$.
    Since each $V \in P_k^\perp$ is descended from a first perp descendant $U$ of $W$,
    $U \in \bW_{n+\ell}$ for some $\ell \le k$, our goal will be to estimate the number of 
    first perp descendants of $W$ and their first prime descendants.
    
    Let $U \in \bW_{n + \ell}$ be a first perp descendant of $W$    
    and let $U' \in \bW_{n + \ell - 1}$ be the parent of $U$.
    Then $T^{-1}(U) \subset U'$ and $U' \not\perp \bW_{\ell-1}$.

    Let $U'' \in \bW_{n+\ell-2}$ be the parent of $U'$ and consider the curve $T(U'')$. By
    construction, $U'$ is formed from $T(U'')$ after removing intersections of $T(U'')$ with
    $B_{n+\ell-1}$ and possibly cutting at $\partial_s D$.
    Since $U' \not\perp \bW_{\ell-1}$, there exists $Q' \in \bW_{\ell-1}$
    such that $A^{\ell-1}_{Q'} = A^{\ell-1}_{U'}$.  Let $Q'' \in \bW_{\ell-2}$ denote the parent of $Q'$.
    Then $Q'$ is formed from $T(Q'')$ after we remove intersections of $T(Q'')$ with $B_{\ell-1}$ and possibly cut at $\partial_s D$.
    See Figure~\ref{fig:perp}.

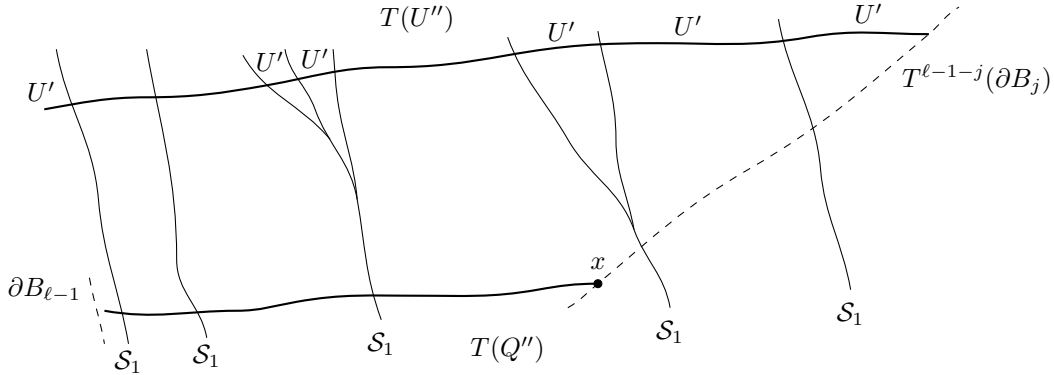
\begin{figure}[ht]
\begin{tikzpicture}[x=8mm,y=8mm]

\draw[thick] (2.3,4.5) to[out=10, in=180] (4,4.7) to[out=0, in=190] (6,4.9) to[out=10, in=180] (8,5.2) to[out=0, in=190] (10,5.4) to[out=10, in=180] (12.5,5.6) to[out=0, in=190] (15,5.7) to[out=10, in=180] (17,5.75);
\draw[thick] (3.3,1.15) to[out=-10, in=180] (5.5,1.15) to[out=0, in=190] (6.5,1.3) to[out=10, in=180] (8.5,1.4) to[out=0, in=190] (10.5,1.5) to[out=10, in=180] (11.5,1.6);

\draw[dashed] (3.05, 1.7) to[out=-80, in=100] (3.3,0.6);
\draw(2.5,5.5) to[out=-80, in=95] (3.2,3) to[out=-85, in=100] (3.7,.6) ;

\draw(4,5.5) to[out=-80, in=90] (4.5,2) to[out=-90, in=100] (5,.7) ;

\draw (6.3, 5.5)  to[out=-80, in=110]  (6.8,4.5) to[out=-70, in=100] (7.5,3)  to[out=-80, in=110]   (7.9,1) ;
\draw (7.1,5.5) to[out=-85, in=100] (7.2, 4.5) to[out=-80, in=95] (7.5, 3);
\draw (5.6, 5.4) to[out=-60, in=130] (7.05, 4);

\draw (10,5.7) to[out=-70, in=120] (11, 4) to[out=-60, in=110] (12.1,2.5) to[out=-70, in=100] (12.7,1.2);
\draw (11.5,5.8) to[out=-80, in=90] (11.8, 4) to[out=-90, in=100] (12.1,2.5);

\draw(14.5,6) to[out=-80, in=95] (15.2,3.5) to[out=-85, in=100] (15.7,1.5) ;

\draw[dashed] (11,1.2) to[out=30, in=220] (11.5,1.6) to[out=40, in=210] (14,3.5) to[out=30, in=220]  (17,5.75) to[out=40, in=210] (17.5,6.2);

\node at (10,.5){\small $T(Q'')$};
\node at (8.5, 6){\small $T(U'')$};
\node at (2.3,1.5){\small $\partial B_{\ell-1}$};
\node at (3.7,.3){\small $\cS_1$};
\node at (5,.4){\small $\cS_1$};
\node at (7.9,.6){\small $\cS_1$};
\node at (12.7,.8){\small $\cS_1$};
\node at (15.7,1.1){\small $\cS_1$};

\node at (11.5, 1.6) [circle,draw,fill,minimum width=3pt,inner sep=0pt,label={above:{\small $x$}}] {};

\node at (17.8,5){\small $T^{\ell-1-j}(\partial B_j)$};

\node at (2.25,4.8){\small $U'$};
\node at (6.05,5.3){\small $U'$};
\node at (6.8,5.35){\small $U'$};
\node at (10.9,5.8){\small $U'$};
\node at (13,5.9){\small $U'$};
\node at (16,6.1){\small $U'$};

 \end{tikzpicture}
\caption{Intersection of $T(U'')$ and $T(Q'')$ with $\cS_1$ and possible locations of $U'$ leading
to $U \perp \bW_\ell$.}
\label{fig:perp}
\end{figure}

Nodes $U'$ that lead to $U \perp \bW_k$ can be formed in two ways.
\begin{itemize}
    \item[(1)] $U' \subset \Delta_{T(U'')}^M (T(Q''))$, i.e.\
        $U'$ can be connected to $T(Q'')$ by a foliation of stable curves.
        Yet $U'$ is necessarily separated from $T(Q'') \setminus B_{\ell-1}$ by curves in $\cS_1$.
        Since curves in $\cS_1$ lie in the
        stable cone, $U' \subset \Delta^M_{T(U'')}(T(Q'') \cap B_{\ell-1})$.
        Both the length of $T(Q'') \cap B_{\ell-1}$ and the distance between $T(U'')$ and $T(Q'')$ along
        stable curves is at most $C b_0 \Lambda^{-\ell + 1}$, so using the transversality of the
        stable and unstable cones,
        \[
            |U'| \le C b_0 \Lambda^{-\ell +1}
            .
        \]
        See for example the three 
        left-most possible $U'$ in Figure~\ref{fig:perp}.
        There are at most $N_{\cS}+1$ possible such $U'$ stemming from $U'' \in \bW_{\ell-2}$
        and at most $C e^{h \ell}$ possible such $U''$.
    \item[(2)] $U' \not\subset \Delta_{T(U'')}^M (T(Q''))$. 
        This can happen
        at one of the two ends of $T(U'')$.  In this case, $U'$ may be long or short, see for example
        the right-most three possible $U'$ in Figure~\ref{fig:perp}.
        Yet in such a case, the corresponding endpoint $x$ of $T(Q'')$ was created at some time
        $j < \ell-1$ by an intersection with $B_j$, i.e.\ $T^{-(\ell-1-j)}(x) \in \partial B_j$
        (and in turn $T^{-(\ell-1)}(x) \in E_j$).  

        Let $\gamma$ be the stable line with maximum slope $-\cK_{\min}$ (see \eqref{eq:cU}) through $x$, contained and fully crossing
        the element of $\cM_{-(\ell-1-j)}$ containing $x$.
        Then no element of $\cS_1$ may both intersect $T(U'')$ to the left of $\gamma$ and pass to the right of $x$.
        Thus $U'$ must lie on the right of $\gamma$.

        \begin{figure}[ht]
            \begin{tikzpicture}[x=8mm,y=8mm]
                % S
                \draw[thick] (21.2,4.4) to[out=-90, in=95] (21.6,2) to[out=-85, in=100] (22,.5);
                \node at (22.5,2.5){\small $S \in \cS_1$};

                % T^{-?}(U')
                \draw (17.5,3.5) to[out=0, in=190] (18.5,3.6) to[out=10, in=180] (20.0,3.7);
                \node at (19,4.2){\small $T^{-(\ell - 1 - j)}(U')$};

                % T^{-?}(Q'')
                \draw (14.0, 1.4) to[out=10, in=180] (18.5,1.5);
                \node at (17.3,1.0){\small $T^{-(\ell - j)}(Q'')$};

                % T^{-?}(\gamma)
                \draw[dashed] (18.7, 1.0) to[out=115, in=-85] (18.5, 1.5) to[out=115, in=-75] (17.2, 4.0);
                \node at (16.5, 2.4){\small $T^{-(\ell - 1 - j)}(\gamma)$};
                \node at (18.5, 1.5) [circle,draw,fill,minimum width=2pt,inner sep=0pt,label={right:{\small $T^{-(\ell - 1 - j)}(x)$}}] {};

            \end{tikzpicture}
            \caption{Preimages of $U'$ and $Q''$ near the singularity curve.}
            \label{fig:Q''}
        \end{figure}
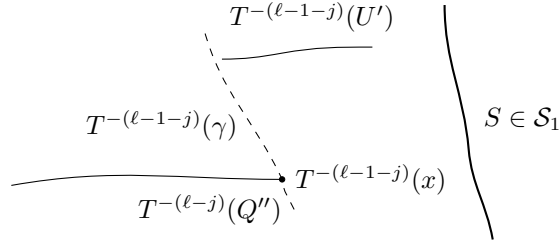

        Considering the preimages of $U'$ and $T(Q'')$ under $T^{\ell - 1 - j}$ as in Figure~\ref{fig:Q''},
        where $T^{-(\ell - 1 - j)}(x)$ is at the distance $b_0 \Lambda^{-j}$ of some stable curve $S \in \cS_1$,
        and following the logic of the proof of Lemma~\ref{lem:firstprime nperp}, we observe that
        \begin{equation}
            \label{eq:TU'}
            |T^{-(\ell-1-j)}(U')|
            \le C b_0 \Lambda^{-j}
            .
        \end{equation}

        There are at most $N_{\cS}+1$ possible $U'$ created for each such endpoint $x \in T^{\ell - 1}(E_j)$,
        and there are at most $\# E_j \leq C e^{h j}$ options for $x$.
\end{itemize}

Our analysis can be summarized as follows: each first perp descendant $U \in \bW_{n + \ell}$ of $W$
is, for some $0 \leq j < \ell$, a child of one of at most $C e^{h j}$ parents $U' \in \bW_{n+\ell-1}$
satisfying~\eqref{eq:TU'}.
Subsequently, $|U| \leq Z_{j,\ell}$ with $Z_{j, \ell} = C ( \Lambda^{-j} )^{2^{-s_0(\ell - j)}}$.

Set $L_{j, \ell} = \max \{ 0, s_0^{-1} \log_2 j - (\ell - j) - R \}$, where $R > 0$ is so large that
if $|U| \leq Z_{j, \ell}$ then every leaf in $\cG_m(U)$, $0 \leq m < L_{j, \ell}$, has length at most $\min\{\delta', \delta_2 / 2\}$.
In particular, $\# \cG_m(U) \leq C e^{h' m}$ and $U$ has no prime descendants in $\bW_{n + \ell + m}$, counting $U$ itself.

For $m > L_{j,\ell}$, either $U$ is prime itself, or the first prime descendants of $U$ are created with rate at most
$C'' e^{h'' (m - L_{j, \ell}) }$ according to Lemma~\ref{lem:FP}. Putting our estimates together, summing over $U$ and
its first prime descendants, we get
\[
    \# P_k^{\perp} 
    \leq C \sum_{j \leq \ell \leq k - L_{j, \ell}} \exp \{ h j + h' L_{j, \ell} + h'' (k - \ell - L_{j, \ell}) \}
    .
\]
Observe that since $h', h'' > 0$, the summand above decreases exponentially with $\ell$.
Hence taking the sum over $\ell \geq j$,
\[
    \# P_k^{\perp}
    \leq C \sum_{j \leq k - L_{j, j}} \exp \{ h j + h' L_{j, j} + h'' (k - j - L_{j, j}) \}
    .
\]
Since $L_{j,j} = s_0^{-1} \log_2 j - R$ for $j \ge 2^{s_0 R}$, we define
\[
    k' = \max \{ j : j + s_0^{-1} \log_2 j \le k \} \le k - s_0^{-1} \log_2 k + C
    ,
\]
so that
\begin{align*}
    \# P_k^{\perp}
       & \leq C e^{k h''} \sum_{j \le k'} \exp \{ (h - h'') j + (h' - h'') s_0^{-1} \log_2 j \}
       \\
       & \leq C \exp \{ k h'' + (h - h'') k' + (h' - h'') s_0^{-1} \log_2 k' \}
       \\
       & \leq C \exp \{ k h - (h - h') s_0^{-1} \log_2 k \}
       = C e^{kh} k^{-\alpha}
       ,
\end{align*}
where for the second inequality we used Lemma~\ref{lem:eab},
and for the third inequality we used $k - k' \leq C + \log_2 k$
and $\log_2 k - \log_2 k' \leq C$.
\end{proof}

\begin{cor}
    \label{cor:firstprime}
    Suppose that $W \in \bW_n$ and either $n = 0$ and $W = \wroot$ or $n \geq 1$ and $W$ is prime.
    Then the number of first prime descendants of $W$ in $\bW_{n + k}$
    is bounded above by $C e^{hk} k^{-\alpha}$.
\end{cor}

\begin{proof}
    The bound for $n = 0$ follows from Lemma~\ref{lem:FP},
    and the bound for $n \geq 1$ follows from Lemmas~\ref{lem:firstprime nperp} and~\ref{lem:firstprime perp}.
\end{proof}

\begin{proof}[Proof of Proposition~\ref{prop:YTb}]
    We will combine our estimates from Corollary~\ref{cor:firstprime} and Proposition~\ref{prop:YCb}
    with the probabilistic argument from Proposition~\ref{prop:ktails} in the appendix.

    Let $\bW_n^r$ be the set of return nodes in $\bW_n$, namely
    \[
        \bW_n^r
        = \{ W \in \bW_n : W \text{ properly crosses } D \text{ and } A_W^{-n} \in \cA_n \}
        .
    \]
    By Proposition~\ref{prop:YCb}, $C^{-1} e^{hn} \leq \# \bW_n^r \leq C e^{hn}$.

    Fix a large $n$ and supply $\bW_n^r$ with the normalized probability counting measure $\bP$.
    For $W \in \bW_n^r$ and $0 \leq j \leq n$, denote by $W_j$ the ancestor of $W$ at height $j$.

    Let $\tau(W) = \# \{1 \leq j \leq n : W_j \; \text{is prime} \}$ and
    define $t_k(W)$ for $0 \leq k \leq \tau(W)$ by $t_0(W) = 0$ and
    $t_{k+1} = \min \{j > t_k : W_j \; \text{is prime} \}$ recursively.
    Set $X_j = (t_j - t_{j-1}) 1_{\tau \geq j}$ and $S = \sum_{j \geq 1} X_j$.

    Suppose $V \in \bW_\ell^r$ with $1 \leq \ell < n$.
    By Proposition~\ref{prop:YCb} and Lemma~\ref{lem:AW},
    the number of elements of $\bW_n^r$ that are descended from $V$ and
    have no prime ancestors at heights $\ell + 1, \ldots, n$ is at least $C e^{h(n-\ell)}$.
    Applying this with $\ell = t_j$ yields
    \[
        \bP \bigl( \tau = j \mid \tau \geq j, X_1, \ldots, X_j \bigr)
        \geq 1 - \theta
        \quad \text{for all} \quad
        j \geq 1
    \]
    with some $\theta \in (0, 1)$ independent of $j$ or $n$.
    Similarly, by Corollary~\ref{cor:firstprime},
    \[
        \bP \bigl( X_{j+1} = k \mid \tau \geq j + 1, X_1, \ldots, X_j \bigr)
        \leq C k^{ - \alpha}
        \quad \text{for all} \quad
        j \geq 0
        ,\ 
        k \geq 1
        .
    \]
    Having verified the hypotheses of Proposition~\ref{prop:ktails}, we conclude that
    $\bP(S = k) \leq C k^{-\alpha}$ for all $k \ge 1$.
    The desired result follows.
\end{proof}

%%%%%%%%%%%%%%%%%%%%%%%%%

\section{Symbolic model}
\label{sec:symbolic}

Let $r_n$, $n \geq 1$ be a sequence of nonnegative integers, not all of them $0$.
Consider a directed graph as on the picture below with $r_n$ arrows going from
$\circled{n}$ to $\circled{1}$.
\begin{equation}
    \label{eq:MC}
    \begin{aligned}
        \begin{tikzpicture}[node distance=20mm,minimum size=10mm]
              \node[rounded rectangle,draw] (w1) {\, \qquad 1 \qquad \,};
              \node[circle,draw] (w2) [above of=w1] {2};
              \node[circle,draw] (w3) [above of=w2] {3};
              \node[]            (w4) [above of=w3] {\vdots};
              \path[->,>=stealth',thick]
                (w1) edge (w2)
                (w2) edge (w3)
                (w3) edge (w4)
                (w1) edge [out=-45,in=-135,looseness=5.9] (w1)
                (w1) edge [out=-60,in=-120,looseness=6] (w1)
                (w2) edge [out=7.5,in=45,looseness=1.6] (w1)
                (w2) edge [out=-10,in=60,looseness=1.5] (w1)
                (w3) edge [out=-20,in=30,looseness=1.5] (w1)
                (w3) edge [out=0,in=15,looseness=1.5] (w1)
                (w3) edge [out=20,in=0,looseness=1.8] (w1)
                ;        
        \end{tikzpicture}
    \end{aligned}
\end{equation}
We label the edges from $\circled{n}$ to $\circled{1}$ by elements of some set
$\cR_n$ with $\# \cR_n = r_n$, and the edges $\circled{n} \to \circled{n+1}$ by $E_n$.
Let $\cR$ be the disjoint union $\sqcup_n \cR_n$. Let $\Lambda > 1$.

\begin{rmk}
    In Section~\ref{sec:final} we will associate $\cR_n$, $\cR$ and $\Lambda$ with those from
    Section~\ref{sec:cylinders}, but for now these are abstract.
\end{rmk}

According to~\eqref{eq:MC}, let $\Delta$ be the set of two-sided admissible sequences in the alphabet
$\fA = \cR \cup \{E_n\}_{n \geq 1}$ which visit $\circled{1}$ infinitely often in the future,
and let $\sigma \colon \Delta \to \Delta$ be the left shift.
Naturally, $\sigma$ defines a topological Markov chain.

Let $\Delta_0 \subset \Delta$ be the set of paths which ``start from \circled{1}'',
i.e.\ with the zero-indexed symbol in $\{E_1\} \cup \cR_1$.
Let $\tau \colon \Delta_0 \to \{1,2,\ldots \}$ be the first return time to $\Delta_0$,
\[
    \tau(x) = \min \{ n \geq 1 : \sigma^n(x) \in \Delta_0 \}
    .
\]
Then the induced map $\sigma_\tau \colon \Delta_0 \to \Delta_0$,
$\sigma_\tau(x) = \sigma^{\tau(x)}(x)$ is a full shift on the alphabet
$\cR$. (Technically, the alphabet is 
$\{(E_1, \ldots, E_{|R|-1}, R) : R \in \cR \}$, but it is naturally identified with $\cR$.)

An $x = (\ldots, x_{-1}, x_0, x_1, \ldots) \in \Delta$ can be described by the sequence
of times $t_{n}$ when the orbit of $x$ visits $\Delta_0$, with $t_{-1} < 0 \leq t_0$,
and the corresponding sequence $R_n = x_{t_{n+1}-1} \in \cR_{t_{n+1} - t_n}$ of choices
of return path to $\Delta_0$.
With $x'$ given by $t'_n$ and $R'_n$, define the separation time and separation distance by
\begin{equation}\label{eq:sep}
    \begin{aligned}
        s(x, x') & = \inf \bigl\{ n \geq 0 : t_{\pm n} \neq t'_{\pm n} \text{ or } R_{\pm n} \neq R'_{\pm n} \bigr\}
        , \\
        d(x, x') & = \Lambda^{- s(x, x')}
        .
    \end{aligned}
\end{equation}
Note that the separation time $s(x, x')$ measures time with respect to number of returns to
$\Delta_0$.
Now $d$ is a metric on $\Delta$, and for the rest of this section $\Delta$ is a metric space.

\begin{thm}
    \label{thm:Delta}
    Let $\lambda > 1$ and suppose that $\sum_{n \geq 1} n \lambda^{-n} r_n < \infty$.  
    Then there exists a $\sigma$-invariant probability $\mu_\Delta$ on $\Delta$ with the following properties.
    \begin{enumerate}[label=(\alph*)]

        \item\label{thm:Delta:mix}
            If $r_n = O(\lambda^{n} / n^{\alpha})$ with $\alpha > 2$ and $\gcd \{ n : r_n > 0 \} = 1$,
            then for all $\gamma \in (0,1]$ there exists $C_\gamma > 0$ such that for all $u,v \in C^\gamma(\Delta)$
            we have the correlation bound
            \[
                \biggl| \int u \; v \circ \sigma^n \, d\mu_\Delta - \int u \, d\mu_\Delta \int v \, d\mu_\Delta \biggr|
                \leq C_\gamma |u|_{C^\gamma} |v|_{C^\gamma} \, n^{-\alpha + 2}
                \quad \text{for all} \quad n \geq 1
                .
            \]
        \item\label{thm:Delta:ASIP}
            If $r_n = O(\lambda^{n} / n^{\alpha})$ with $\alpha > 3$ and 
            $v \colon \Delta \to \bR$ is H\"older continuous with $\int v \, d\mu_\Delta = 0$,
            then for each $\eps > (\alpha - 1)^{-1}$
            the partial sums $S_n = \sum_{k=0}^{n-1} v \circ \sigma^k$, as a random process
            on the probability space $(\Delta, \mu_\Delta)$,
            satisfy the ASIP with rate $o(n^{\eps})$.
            
        \item\label{thm:Delta:mu}
            If $\sum_{n \geq 1} \lambda^{-n} r_n = 1$, then
            $\mu_\Delta$ has entropy equal to $\log \lambda$.
    \end{enumerate}
\end{thm}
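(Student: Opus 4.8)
The plan is to realize $\Delta$ as a two-sided suspension-like object over the Bernoulli shift $\sigma_\tau \colon \Delta_0 \to \Delta_0$ with return-time function $\tau$, and then invoke standard results for Young towers / Gibbs--Markov maps with polynomial tails. For part~\ref{thm:Delta:mu}, the hypothesis $\sum_n r_n \lambda^{-n} = 1$ says exactly that assigning each symbol $R \in \cR_n$ probability $\lambda^{-n}$ defines a Bernoulli probability measure $\bar\mu$ on the one-sided shift over $\cR$; pulling back through the natural identification $\Delta_0 \cong \cR^{\bZ}$ and suspending by $\tau$ (with $\tau = n$ on $\cR_n$) gives a $\sigma$-invariant measure on $\Delta$, which is a probability measure precisely when $\int \tau \, d\bar\mu = \sum_n n \lambda^{-n} r_n < \infty$; after normalization call it $\mu_\Delta$. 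Its entropy is computed by Abramov's formula: $h_{\mu_\Delta}(\sigma) = h_{\bar\mu}(\sigma_\tau) / \int \tau \, d\bar\mu$, and since $\bar\mu$ is Bernoulli with symbol probabilities $\lambda^{-\tau}$ one has $h_{\bar\mu}(\sigma_\tau) = -\sum_{R \in \cR} \lambda^{-\tau(R)} \log \lambda^{-\tau(R)} = (\log\lambda) \int \tau \, d\bar\mu$, so $h_{\mu_\Delta}(\sigma) = \log\lambda$.

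For part~\ref{thm:Delta:mix}, the key point is that $\sigma_\tau$ is a (full-branch) Gibbs--Markov map with respect to the metric~\eqref{eq:sep}: the partition into cylinders $[R]$, $R\in\cR$, is generating, each branch maps onto $\Delta_0$, the Jacobian of $\bar\mu$ with respect to itself is locally constant (the potential $-\tau\log\lambda$ is constant on each $[R]$), and separation-time regularity holds trivially. The tail of the return time is $\bar\mu(\tau = n) = r_n \lambda^{-n} = O(n^{-\alpha})$ with $\alpha > 2$, equivalently $\bar\mu(\tau > n) = O(n^{-\alpha+1})$, which is summable. The condition $\gcd\{n : r_n > 0\} = 1$ is exactly the aperiodicity needed to rule out almost-periodic obstructions to mixing. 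One can then either (i) quotient along stable leaves — but here separation in the past and future is symmetric, so instead it is cleaner to (ii) apply the abstract two-sided results: the decay-of-correlations estimate for Hölder observables on a two-sided Young tower with return-time tail $O(n^{-\alpha})$ gives correlation decay $O(n^{-\alpha+1})$ for the \emph{tower}, but because the base map is Gibbs--Markov (uniformly expanding, bounded distortion), the sharper operator-renewal bounds of Sarig and Gou\"ezel apply and yield $O(n^{-\alpha+1})$ for $L^\infty$ versus BV/Lipschitz and, after the standard approximation argument reducing $C^\gamma$ to the $\gamma=1$ case by splitting $v$ into a Lipschitz part plus a small-sup error, the stated bound $n^{-\alpha+2}$ for general Hölder pairs. (The loss of one power from $\alpha-1$ to $\alpha-2$ is the usual cost of passing from the indicator/renewal estimate to genuine Hölder observables via the $\sum_{k>n}\bar\mu(\tau>k)$ correction term.) I would cite Gou\"ezel's thesis or Melbourne--Terhesiu for the precise statement in the two-sided Gibbs--Markov setting.

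For part~\ref{thm:Delta:ASIP}, since $v$ depends only on future coordinates it descends to an observable on the one-sided Gibbs--Markov shift $(\Delta_0, \sigma_\tau, \bar\mu)$ viewed through the tower; with return-time tail $O(n^{-\alpha})$, $\alpha > 3$, the induced observable $\hat v = \sum_{k=0}^{\tau-1} v\circ\sigma^k$ lies in $L^p(\bar\mu)$ for every $p < \alpha - 1$ (using that $v$ is bounded, $|\hat v| \le \|v\|_\infty \tau$, and $\tau \in L^q$ for $q < \alpha - 1$), and the Gibbs--Markov structure gives the requisite Hölder/regularity control on $\hat v$. One then invokes the martingale-coboundary (Gordin) decomposition on the Gibbs--Markov base together with the Skorokhod-type / strong-approximation machinery for such towers — e.g. the ASIP results of Melbourne--Nicol or the rates in Cuny--Merlev\`ede or Gou\"ezel — which for an $L^p$-type observable over a tower with this tail yield the ASIP with rate $o(n^{\eps})$ for any $\eps > (\alpha-1)^{-1}$. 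The exponent $(\alpha-1)^{-1}$ is exactly what one expects: it is the reciprocal of the number of finite moments of the return time, which governs the error in the tower-to-base reduction.

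The main obstacle is part~\ref{thm:Delta:mix}: getting the \emph{sharp} polynomial rate $n^{-\alpha+2}$ (rather than a lossier $n^{-\alpha+1+\text{something}}$ or needing $\alpha$ larger) requires the operator-renewal / spectral approach rather than the elementary coupling bound, and one must be careful that the two-sided setup genuinely reduces to the one-sided Gibbs--Markov map — this works here only because the suspension is by a function constant on partition elements and the "stable" direction (past coordinates) contributes nothing beyond exponential contraction in the metric $d$, so the reduction is exact rather than approximate. Verifying that $C^\gamma(\Delta)$ observables are handled uniformly in $\gamma$ (the constant $C_\gamma$) is routine but needs the standard two-step approximation; I would relegate it to a short lemma.
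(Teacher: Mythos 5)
Your proposal is correct in substance and in large part parallels the paper's proof, but with one genuinely different ingredient and one confused passage that deserves comment.

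For part~(a), you construct $\mu_\Delta$ by putting the Bernoulli measure with weights $\lambda^{-\tau(R)}$ on $\Delta_0 \cong \cR^{\bZ}$, suspending by $\tau$, normalizing, and computing entropy via Abramov's formula $h_{\mu_\Delta}(\sigma) = h_{\bar\mu}(\sigma_\tau)/\int\tau\,d\bar\mu$. The paper instead writes $\mu_\Delta$ down directly as a one-step Markov measure on the alphabet $\fA = \cR\cup\{E_n\}$ with explicit transition probabilities and then plugs into the Markov entropy formula of Walters. The two constructions produce the same measure, and both entropy computations are correct; the Abramov route is arguably cleaner because it avoids the somewhat lengthy telescoping manipulation that the direct Markov computation requires. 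This is a legitimate alternative and worth noting.

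For part~(b), the conclusion is right but your narrative is muddled. You assert that the basic two-sided Young tower coupling estimate gives only $O(n^{-\alpha+1})$, that operator-renewal methods (Sarig, Gou\"ezel) are needed for a ``sharper'' bound, and that a Hölder approximation then ``loses'' a power to arrive at $n^{-\alpha+2}$. None of that is necessary or quite coherent as stated: with $\bar\mu(\tau = n) = r_n\lambda^{-n} = O(n^{-\alpha})$ one has $\bar\mu(\tau > n) = O(n^{-\alpha+1})$, and Young's coupling result (as packaged in the references the paper cites, namely~\cite{KKM19} building on~\cite{CG12,MT14}) already delivers correlation decay $\sum_{k>n}\bar\mu(\tau>k) = O(n^{-\alpha+2})$ for H\"older observables on the two-sided tower directly, without any operator-renewal sharpening and without a further loss from a $C^\gamma$-to-Lipschitz reduction. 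Your final sentence in fact identifies this sum correctly as the source of the $n^{-\alpha+2}$, which contradicts the earlier claim that the tower estimate gives $n^{-\alpha+1}$. Since your tower is a first-return suspension over a full-branch Bernoulli (hence Gibbs--Markov) base with zero distortion, you are in the simplest possible setting and the coupling bound is sharp as stated; invoking renewal operators would, if anything, be aimed at lower bounds or higher-order asymptotics, neither of which is needed here.

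For part~(c), your route via Gordin martingale-coboundary plus strong approximation for the induced Gibbs--Markov map, keeping track that the induced observable lies in $L^p$ for $p < \alpha - 1$, is the right idea; the paper simply cites~\cite{CDKM20}, which is exactly the reference that gives the rate $o(n^\eps)$ for $\eps > (\alpha-1)^{-1}$ in this setting, and which also explains why the restriction to observables depending only on future coordinates is currently needed to reduce to the one-sided shift. Your identification of $(\alpha-1)^{-1}$ as the reciprocal of the number of finite moments of $\tau$ is the correct heuristic.
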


\begin{rmk}
    There can be at most one nonnegative $\lambda$ satisfying
    the relation $\sum_{n \geq 1} \lambda^{-n} r_n = 1$,
    but its existence is not guaranteed. 
    For example, with $r_n = \lfloor 2^{n-1} / n^{2} \rfloor$
    the sum is smaller than $1$ for $\lambda \geq 2$ and infinite for $\lambda < 2$.
\end{rmk}

\begin{rmk}
    If $r_n = O(\beta^n)$ for some $\beta < \lambda$, then the rate of decay
    of correlations is exponential, following the same proof and using the same
    references. But we will not need this result here.
\end{rmk}

\begin{proof}[Proof of Theorem~\ref{thm:Delta}]
    Set 
    \[
        S = \sum_{n \geq 1} n \lambda^{-n} r_n
        , \qquad
        w_n = S^{-1} \sum_{k \geq n} \lambda^{-k} r_k
        , \qquad
        p_n = \frac{w_n - w_{n+1}}{w_n r_n}
        .
    \]
    First we define $\mu_{\Delta}$ on the space of one-sided paths on~\eqref{eq:MC}
    by prescribing its values on all $n$-cylinders in the alphabet $\fA$.
    For 1-cylinders, let
    \[
        \mu_{\Delta}([E_n]) = w_{n+1}
        \qquad \text{and} \qquad
        \mu_{\Delta}([R]) = S^{-1} \lambda^{-n}
        \; \text{ for } \; R \in \cR_n
        .
    \]
    Then, for a cylinder $[A_1, \ldots, A_k]$ of length at least $2$, let
    \[
        \mu_\Delta([A_1, \ldots, A_k])
        = \mu_\Delta([A_1, \ldots, A_{k-1}]) \Pi(A_{k-1}, A_k)
        ,
    \]
    where the transition probabilities $\Pi(A_{k-1}, A_k)$ are given by
    \[
        \Pi(\cdot, E_n) = \frac{w_{n+1}}{w_n}
        \qquad \text{and} \qquad
        \Pi(\cdot, R) = p_n \; \text{ for } \; R \in \cR_n
        ,
    \]
    as long as the corresponding transitions are compatible with the graph~\eqref{eq:MC},
    and otherwise $\Pi(\cdot, \cdot)$ is zero.

    The above defines $\mu_\Delta$ as a probability measure on the space of one-sided paths.
    Observe that $\mu_\Delta$ is Markov and shift-invariant, hence it extends to
    a $\sigma$-invariant measure on all of $\Delta$.

    To prove~\ref{thm:Delta:mix}, observe that $\sigma \colon \Delta \to \Delta$
    is a suspension over $\sigma_\tau \colon \Delta_0 \to \Delta_0$ with roof function $\tau$.
    The map $\sigma_\tau$ is a Bernoulli shift both topologically
    and measure theoretically (with respect to the $\sigma_\tau$-invariant probability measure
    $\mu_\Delta / \mu_\Delta(\Delta_0)$).
    Hence we deal with a  particularly simple (zero distortion, first return inducing scheme)
    Young tower~\cite{Y98,Y99}. From
    \[
        \gcd \bigl\{ n : \mu_\Delta ( \tau = n) > 0 \bigr\} = 1
    \]
    and the return time tail bound
    \[
        \mu_\Delta \bigl( \tau(x) \geq n \bigr)
         = \frac{w_{n}}{w_1}
         = O(n^{-\alpha + 1})
    \]
    we get the required mixing rate for H\"older observables on $\Delta$,
    see~\cite{KKM19} (based on~\cite{CG12,MT14}).

    The ASIP~\ref{thm:Delta:ASIP} is a standard result for one-sided Young towers \cite{MN05, CDKM20}.
    The ASIP with our particular error rate for general H\"older observables on a two-sided tower is a recent improvement, see~\cite{CDKM23}.

    Finally, we prove \ref{thm:Delta:mu}, assuming $\sum_{n \ge 1} \lambda^{-n} r_n =1$.
    Since $\mu_\Delta$ is a Markov measure, its entropy can be computed explicitly~\cite[Theorem~4.27]{W75}:
    \begingroup
    \allowdisplaybreaks
    \begin{align*}
        h_{\mu_\Delta}
            & = - \sum_{A, B \in \fA} \mu_{\Delta}([A]) \Pi(A, B) \log \Pi(A, B) \\
            & =  - \sum_{n \ge 1} \mu_{\Delta}([E_n])
            \Bigl[ \frac{w_{n+2}}{w_{n+1}} \log \Bigl( \frac{w_{n+2}}{w_{n+1}} \Bigr) + r_{n+1}p_{n+1} \log p_{n+1} \Bigr] \\
            & \qquad \; - \sum_{n\ge 1} \sum_{R \in \cR_n} \mu_{\Delta}([R])
            \Bigl[ \frac{w_2}{w_1} \log \Bigl( \frac{w_2}{w_1} \Bigr) + r_1 p_1 \log p_1  \Bigr] \\
            & = - \sum_{n \ge 1} w_{n+2} \log \Bigl( \frac{w_{n+2}}{w_{n+1}} \Bigr) + (w_{n+1} - w_{n+2}) \log p_{n+1} \\
            & \qquad \; - w_2 \log \Bigl(\frac{w_2}{w_1} \Bigr) + (w_1 - w_2) \log p_1
            ,
    \end{align*}
    \endgroup
    where we have used the fact that $\sum_{n\ge 1} \sum_{R \in \cR_n} \mu_{\Delta}([R])  = w_1$
    and $w_n r_n p_n = w_n - w_{n+1}$.
    Then, using also $p_n = (S \lambda^n w_n)^{-1}$ and $\sum_{n \ge 1} \lambda^{-n} r_n = 1$,
    \begingroup
    \allowdisplaybreaks
    \begin{align*}
        h_{\mu_\Delta}
        & =  - \sum_{n \ge 1} \Bigl[ (w_n - w_{n+1}) \log \frac{\lambda^{-n}}{S w_n}
            + w_{n+1} \log w_{n+1} - w_{n+1} \log w_n
        \Bigr]
        \\
        & =  w_1 \log w_1 - \sum_{n \ge 1} (w_n - w_{n+1})  \log \frac{\lambda^{-n}}{S}
        \\
        & = - \frac{\log S}{S} + \frac{\log S}{S} \sum_{n \ge 1} \lambda^{-n} r_n
        + \frac{\log \lambda}{S} \sum_{n \ge 1} n \lambda^{-n} r_n
        \\
        & =  \log \lambda
        .
    \end{align*}
    \endgroup
%    This proves~\ref{thm:Delta:mu}.

 \end{proof}

%%%%%%%%%%%%%%%%%%%%%%%%%%%%%%%%

\section{Proof of Theorems~\ref{thm:decay} and~\ref{thm:ASIP}}
\label{sec:final}

Here we relate the abstract setup of Section~\ref{sec:symbolic} to the billiard
coding from Section~\ref{sec:cylinders}.
As in Section~\ref{sec:cylinders}, let $\cA, \cR$ denote the collections of cylinders,
let $\Lambda$ denote the hyperbolicity constant from \eqref{eq:hyp}, and let $r_n = \# \cR_n$.
In this setting, let $\sigma \colon \Delta \to \Delta$ be the Markov chain
as in Section~\ref{sec:symbolic}.

Recall that $h > s_0 \log 4$ is the topological entropy of the billiard.
In Propositions~\ref{prop:YCb} and \ref{prop:YTb} we established
that $\# \cA_n$ grows as $e^{hn}$, and that
$\# \cR_n = O(e^{hn} / n^{\alpha})$ where $ \alpha = (h - h') / (s_0 \log 2)$,
and $h'$ is sufficiently small so that $\alpha > 2$.
This implies in particular that $\sum_{n \ge 1} n e^{-h n} r_n < \infty$, which is the 
summability condition needed for Theorem~\ref{thm:Delta}.

Since $\# \cA_n > 0$ for all sufficiently large $n$, we have $\gcd \{n : r_n > 0\} = 1$.

We proceed with the more delicate task of verifying the condition needed for
part (c) of Theorem~\ref{thm:Delta}:
there exists $\lambda > 1$ with $\sum_{n \geq 1} \lambda^{-n} r_n = 1$,
and moreover, $\lambda = e^h$.

\begin{prop}
    \label{prop:works}
    $\sum_{n \geq 1} e^{-h n} r_n  = 1$.
\end{prop}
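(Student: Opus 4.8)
The plan is to exploit the bijection between elements of $\cA$ (finite concatenations of return cylinders) and finite admissible paths from $\circled{1}$ to $\circled{1}$ in the graph~\eqref{eq:MC}, so that counting such paths of total length $n$ reproduces $\# \cA_n$ exactly. Since every cylinder in $\cA$ has a unique representation as a concatenation of elements of $\cR$, the number of elements of $\cA_n$ equals the number of ways to write $n = m_1 + \dots + m_j$ with an ordered choice of $R_i \in \cR_{m_i}$; equivalently, $\# \cA_n = \sum_{j \ge 1} \sum_{m_1 + \dots + m_j = n} r_{m_1} \cdots r_{m_j}$. In terms of generating functions, setting $\mathcal R(z) = \sum_{n \ge 1} r_n z^n$ and $\mathcal A(z) = \sum_{n \ge 1} \#\cA_n\, z^n$, this is the identity $\mathcal A(z) = \mathcal R(z)/(1 - \mathcal R(z))$, valid as formal power series and on any disk where $\mathcal R(z) < 1$.

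Next I would use the two-sided estimates from Propositions~\ref{prop:YCb} and~\ref{prop:YTb}. From $\#\cA_n \asymp e^{hn}$ one gets that the radius of convergence of $\mathcal A(z)$ is exactly $e^{-h}$, and that $\mathcal A(z) \to \infty$ as $z \uparrow e^{-h}$ (because $\#\cA_n \ge C^{-1} e^{hn}$ for large $n$ forces the sum to diverge at $z = e^{-h}$). Through the relation $\mathcal A = \mathcal R/(1-\mathcal R)$, the blow-up of $\mathcal A$ at $e^{-h}$ forces $\mathcal R(z) \to 1$ as $z \uparrow e^{-h}$. On the other hand, Proposition~\ref{prop:YTb} gives $r_n = O(e^{hn}/n^{\alpha})$ with $\alpha > 2 > 1$, so $\mathcal R(e^{-h}) = \sum_n r_n e^{-hn} = O\bigl(\sum_n n^{-\alpha}\bigr) < \infty$; in particular $\mathcal R$ is continuous (indeed finite) up to and including $z = e^{-h}$. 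Combining continuity at the endpoint with $\lim_{z \uparrow e^{-h}} \mathcal R(z) = 1$ yields $\mathcal R(e^{-h}) = 1$, which is precisely $\sum_{n \ge 1} r_n e^{-hn} = 1$, i.e.\ the claimed identity with $\lambda = e^h$.

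The main obstacle — and the point that needs the most care — is justifying that $\mathcal A(z) \to \infty$ as $z \uparrow e^{-h}$ together with the passage from this divergence to $\mathcal R(e^{-h}) = 1$: one must rule out the alternative that $\mathcal R$ stays strictly below $1$ on all of $[0, e^{-h})$ with a jump, which is excluded precisely because $r_n \ge 0$ makes $\mathcal R$ monotone and continuous on its interval of convergence and the tail bound $r_n = O(e^{hn} n^{-\alpha})$ extends this continuity to the closed interval. A clean way to organize this is to note that $\mathcal R$ is nondecreasing and continuous on $[0, e^{-h}]$ with $\mathcal R(0) = 0$; if $\mathcal R(e^{-h}) < 1$ then $\mathcal R(z) < 1$ throughout, so $\mathcal A(z) = \mathcal R(z)/(1-\mathcal R(z))$ would be bounded by $\mathcal R(e^{-h})/(1 - \mathcal R(e^{-h})) < \infty$ on $[0,e^{-h})$, contradicting $\#\cA_n \gtrsim e^{hn}$; and $\mathcal R(e^{-h}) > 1$ is impossible since then $\mathcal R(z) = 1$ for some $z < e^{-h}$, making $\#\cA_n$ grow at rate $(1/z)^n > e^{hn}$, contradicting the upper bound in Proposition~\ref{prop:YCb}. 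Hence $\mathcal R(e^{-h}) = 1$.
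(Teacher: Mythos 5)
Your argument is correct, and it takes a route that is genuinely different in presentation from the paper's. You work with the generating functions $\mathcal R(z) = \sum_n r_n z^n$ and $\mathcal A(z) = \sum_n \#\cA_n\, z^n$, exploit the renewal identity $\mathcal A = \mathcal R/(1-\mathcal R)$ (which hinges on the unique-decomposition remark stated just after $\cA$ is introduced, together with the fact that every concatenation of $\cR$-cylinders is nonempty), and then pin down $\mathcal R(e^{-h})=1$ by an intermediate-value/radius-of-convergence argument using $\#\cA_n \asymp e^{hn}$ from Proposition~\ref{prop:YCb} on one side and the tail bound $r_n = O(e^{hn}/n^\alpha)$, $\alpha>1$, from Proposition~\ref{prop:YTb} on the other. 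The paper instead encodes the same renewal structure in an infinite matrix $R$ with $(R^n)_{1,1}=\#\cA_n$, conjugates by $\diag(e^{-h},e^{-2h},\ldots)$, and invokes Lemma~\ref{lem:Rp} on the $\ell_1$-dynamics of the conjugated operator $R'$ to rule out $\sum_n r_n e^{-hn}$ being $<1$ or $>1$. The two arguments are equivalent at bottom — your $\mathcal A(e^{-h})=\sum_j \mathcal R(e^{-h})^j$ is exactly the $(1,1)$-entry asymptotics that the matrix argument tracks — but your version makes the role of each hypothesis more transparent: the lower bound in Proposition~\ref{prop:YCb} forces divergence of $\mathcal A$ at $e^{-h}$, the upper bound fixes the radius of convergence, and $\alpha>1$ gives finiteness and continuity of $\mathcal R$ up to the boundary. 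One detail you handled correctly but should keep explicit: the identity $\mathcal A(z)=\mathcal R(z)/(1-\mathcal R(z))$ is used both on $[0,e^{-h})$ and, via monotone convergence of nonnegative series, at the endpoint $z=e^{-h}$ when $\mathcal R(e^{-h})<1$, which is what kills that case.
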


The main ingredients in the proof of Proposition~\ref{prop:works} are the
asymptotics of $\# \cA_n$ and the following simple lemma.

For $x = (x_1, x_2, \ldots)$ let $|x|_1 = \sum_{n \geq 1} |x_n|$
denote the $\ell_1$-norm. We say that $x$ is nonnegative if all
its coordinates are nonnegative.

\begin{lemma}
    \label{lem:Rp}
    Suppose $r'_1, r'_2, \ldots \geq 0$ and
    \[
         R' = \begin{pmatrix}
             r'_1 & 1 \\
             r'_2 & & 1 \\
             r'_3 & & & 1 \\
             \vdots & & & & \ddots
         \end{pmatrix}
         .
    \]
    \begin{enumerate}[label=(\alph*)]
        \item\label{Rp:less}
            If $\sum_{i \ge 1} r_i' < 1$, then $|(R')^n x|_1 \to 0$
            for all nonnegative $x \in \ell_1$.
        \item\label{Rp:moar}
            If $\sum_{i \ge 1} r_i' > 1$, then $|(R')^n x|_1 \to \infty$
            for all nonnegative nonzero $x \in \ell_1$.
    \end{enumerate}
\end{lemma}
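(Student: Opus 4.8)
The plan is to reduce both parts to two elementary identities for the action of $R'$ on nonnegative sequences. Write $s = \sum_{i \ge 1} r'_i \in [0, +\infty]$, note that $(R'y)_i = r'_i y_1 + y_{i+1}$ maps nonnegative sequences to nonnegative sequences, and for a nonnegative $x \in \ell_1$ set $x^{(n)} = (R')^n x$. The first identity is the \emph{mass balance}
\[
    |x^{(n+1)}|_1 = \sum_{i \ge 1}\bigl(r'_i x^{(n)}_1 + x^{(n)}_{i+1}\bigr) = s\, x^{(n)}_1 + \bigl(|x^{(n)}|_1 - x^{(n)}_1\bigr) = |x^{(n)}|_1 - (1-s)\, x^{(n)}_1 .
\]
The second is the \emph{descent inequality} $(R'y)_{i-1} = r'_{i-1} y_1 + y_i \ge y_i$ for $i \ge 2$, which upon iterating gives $x^{(n)}_j \le x^{(n+1)}_{j-1} \le \dots \le x^{(n+j-1)}_1$ for all $n \ge 0$, $j \ge 1$, and hence the key bound
\[
    |x^{(n)}|_1 = \sum_{j \ge 1} x^{(n)}_j \le \sum_{m \ge n} x^{(m)}_1 ,
\]
so that the full $\ell_1$-norm of $x^{(n)}$ is controlled by the tail of the single scalar series $\sum_m x^{(m)}_1$.

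For~\ref{Rp:less}, $s < 1$ and the mass balance shows $\bigl(|x^{(n)}|_1\bigr)_n$ is nonincreasing; telescoping it gives $(1-s)\sum_{k \ge 0} x^{(k)}_1 \le |x|_1 < \infty$, so $\sum_k x^{(k)}_1 < \infty$. Feeding this into the key bound yields $|x^{(n)}|_1 \le \sum_{m \ge n} x^{(m)}_1 \to 0$, as required.

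For~\ref{Rp:moar}, first assume $s < \infty$; then $s > 1$ makes $\bigl(|x^{(n)}|_1\bigr)_n$ nondecreasing. If it did not diverge to $+\infty$ it would be bounded, and then telescoping the mass balance would force $\sum_k x^{(k)}_1 < \infty$; but the key bound would then give $|x^{(n)}|_1 \to 0$, contradicting $|x^{(n)}|_1 \ge |x^{(0)}|_1 = |x|_1 > 0$ (here we use that $x$ is nonzero). The degenerate case $s = +\infty$ is even easier: pick $j$ with $x_j > 0$; the descent inequality at $n = 0$ gives $x^{(j-1)}_1 \ge x_j > 0$, and then the mass balance at step $j$ already yields $|x^{(j)}|_1 = +\infty$, a value which persists for all larger $n$ since $R'$ applied to a nonnegative sequence with infinite sum again has infinite sum.

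I do not expect a real obstacle here; the only thing requiring thought is spotting the descent inequality and realizing that it bounds $|x^{(n)}|_1$ by a tail of $\sum_m x^{(m)}_1$, after which each part is a two-line telescoping argument and, notably, no asymptotic control of the renewal sequence $u_n = \bigl((R')^n e_1\bigr)_1$ is needed. I would also record in passing that the same telescoping gives the exact identity $\sum_{n \ge 0} x^{(n)}_1 = |x|_1/(1-s)$ in case~\ref{Rp:less}, which is the kind of quantitative statement that will feed into Proposition~\ref{prop:works}.
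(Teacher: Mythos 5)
Your proof is correct, and it takes a genuinely cleaner route than the paper's, while using essentially the same two raw ingredients. Both proofs start from the telescoping mass-balance identity
\[
|(R')^N x|_1 \;=\; |x|_1 + \Bigl(\sum_i r'_i - 1\Bigr)\sum_{k=0}^{N-1}\bigl((R')^k x\bigr)_1,
\]
and both observe the descent property coming from the superdiagonal of $R'$. But they exploit the descent property differently. The paper uses only the one-sided version $\bigl((R')^k x\bigr)_1 \ge x_{k+1}$ (your inequality at $n=0$), feeds it into the mass balance, and then has to choose tail cutoffs $n_1, n_2, \ldots$ so that the telescoped sum captures ``enough of $|x|_1$'' to shrink the norm by a factor $\sqrt{\beta}$ at each stage; convergence is first obtained along a subsequence and then upgraded by monotonicity of $|(R')^n x|_1$. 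Your observation is that iterating the descent inequality along the diagonal gives the global bound
\[
|x^{(n)}|_1 \;\le\; \sum_{m\ge n} x^{(m)}_1,
\]
which reduces both parts to a two-line argument: in case~\ref{Rp:less} the mass balance caps $\sum_m x^{(m)}_1$ by $|x|_1/(1-s)$, so the tail vanishes; in case~\ref{Rp:moar} boundedness of $|x^{(n)}|_1$ would force summability of $x^{(m)}_1$ and hence, via the same bound, $|x^{(n)}|_1\to 0$, contradicting $|x^{(n)}|_1 \ge |x|_1 > 0$. You avoid the $\sqrt{\beta}$ bookkeeping and the subsequence step entirely, you handle $\sum_i r'_i = \infty$ explicitly rather than declaring it ``without loss of generality'' as the paper does, and you get the exact identity $\sum_{n\ge 0} x^{(n)}_1 = |x|_1/(1-s)$ as a free byproduct, which the paper's argument does not deliver. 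Net effect: same conclusion, less machinery, and a sharper quantitative statement. I would only add a one-line remark that all coordinates $x^{(n)}_j$ remain finite (clear from $(R'y)_j = r'_j y_1 + y_{j+1}$), so that the subtraction in the degenerate case $s=\infty$ (infinite $\ell_1$-norm persists) is justified; this is trivial but worth recording.
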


\begin{proof}
    Set 
    \[
        \beta = \sum_{i \ge 1} r_i'
        .
    \]
    Without loss of generality, assume that $\beta$ is finite.
    Let $x \in \ell_1$ be nonnegative. Then $|R'x|_1 = |x|_1 + (\beta - 1) x_1$,
    in particular $R' x \in \ell_1$.
    By induction,
    \begin{equation}
        \label{eq:proceed}
        \bigl| (R')^nx \bigr|_1
        = |x|_1 + (\beta - 1) \sum_{k =0}^{n-1} \bigl( (R')^kx \bigr)_1
        .
    \end{equation}
    Using the definition of $R'$ and that $x$ is nonnegative, we have $((R')^k x)_1 \ge x_{k+1}$.

    If $\beta < 1$, then choose $n_1$ sufficiently large that
    $\sum_{k > n_1} x_k \le (\sqrt{\beta} - \beta) |x|_1$.  Then,
    \[
        \bigl| (R')^{n_1}x \bigr|_1
        \le |x|_1 + (\beta-1) \sum_{k =1}^{n_1} x_k \le \sqrt{\beta} |x|_1
        .
    \]
    We iterate this relation, choosing $n_2$ sufficiently large so that 
    $\sum_{k > n_2} (R'x)_k \le (\sqrt{\beta} - \beta) |R' x|_1$.  Then,
    \[
        \bigl| (R')^{n_1+n_2}x \bigr|_1 \le \beta |x|_1
        .
    \]
    Proceeding in this way, we obtain a sequence of integers $n_j$ such that
    \[
        \bigl| (R')^{\sum_{j \le k} n_j} x \bigr|_1
        \le \beta^{k/2} |x|_1
        \quad \text{for each } k \geq 1
        .
    \]
    This shows convergence of $|(R')^n x|_1 \to 0$ along a subsequence.
    Combining this with the fact that when $\beta < 1$, \eqref{eq:proceed} implies
    $|(R')^n x |_1 \le |x|_1$
    whenever $x$ is nonnegative, we obtain~\ref{Rp:less}.

    To prove~\ref{Rp:moar} when $\beta >1$, we proceed similarly,
    choosing $n_1$ sufficiently large so that 
    $\sum_{k > n_1} x_k \le \frac{\beta - \sqrt{\beta}}{\beta -1 } |x|_1$.
    Then applying this to~\eqref{eq:proceed},
    \[
        \bigl| (R')^{n_1}x \bigr|_1
        \ge |x|_1 + (\beta-1) \sum_{k=1}^{n_1} x_k \ge \sqrt{\beta} |x|_1
        .
    \]
    Iterating as before we show that $|(R')^n x|_1 \to \infty$ along a subsequence.
    Using that when $\beta > 1$, \eqref{eq:proceed} implies $|(R')^n x |_1 \ge |x|_1$
    whenever $x$ is nonnegative, we obtain~\ref{Rp:moar}.
\end{proof}

\begin{proof}[Proof of Proposition~\ref{prop:works}]
    Define the connectivity matrix for our Markov chain
    \[
         R = \begin{pmatrix}
             r_1 & 1 \\
             r_2 & & 1 \\
             r_3 & & & 1 \\
             \vdots & & & & \ddots
         \end{pmatrix}
         .
    \]
    For a vector $x = (x_1, x_2, \ldots)$, define 
    $x' = (x'_1, x'_2, \ldots) = (x_1 e^{-h}, x_2 e^{-2h}, \ldots)$.
    In other words, $x' = H x$ where $H = \diag(e^{-h}, e^{-2h}, \ldots)$.
    Accordingly, let $r = (r_1, r_2, \ldots)$ and $r' = H r$.

    To the transformation $x \mapsto e^{-h} R x$ there corresponds the
    conjugate transformation
    \[
     %   \label{eq:lin}
        \begin{pmatrix} x'_1 \\ x'_2 \\ \vdots \end{pmatrix}
        \mapsto R' x' = x'_1 \begin{pmatrix} r'_1 \\ r'_2 \\ \vdots \end{pmatrix}
        + \begin{pmatrix} x'_2 \\ x'_3 \\ \vdots \end{pmatrix}
        \text{, where }  R' = \begin{pmatrix}
            r_1' & 1 \\
            r_2' & & 1 \\
            r_3' & & & 1 \\
            \vdots & & & & \ddots
        \end{pmatrix}
        .
    \]
    That is, $e^{-h} H R x = R' H x$.
    Since $H$ is invertible, in fact diagonal, we have 
    $(R')^n = e^{-hn}H R^n H^{-1}$ for each $n$.
    In particular, for each $i \ge 1$,
    \begin{equation}
        \label{eq:R' R}
        \bigl( (R')^n \bigr)_{i,1}
        = e^{-hn} e^{-h(i-1)} (R^n)_{i,1}
        .
    \end{equation}
    
    Let $C$ denote various positive constants which depend only on $\{r_n\}$ and $\alpha$.

    Let $x = (1,0,0,\ldots)$.
    We claim that for all sufficiently large $n$,
    \begin{equation}
        \label{eq:RRR'}
        C^{-1}
        \leq \bigl| (R')^n x \bigr|_1
        \leq C
        .
    \end{equation}
    Then by Lemma~\ref{lem:Rp}, $\sum_{i \ge1} r_i' = \sum_{i \ge 1} e^{-hi} r_i = 1$, as required.

    It remains to verify~\eqref{eq:RRR'}. The lower bound is straightforward: observing that
    $(R^n)_{1,1} = \# \cA_n$ and using the bound $\# \cA_n \geq C e^{hn}$ from Proposition~\ref{prop:YCb},
    for all sufficiently large $n$,
    \[
        \bigl| (R')^n x \bigr|_1
        \geq \bigl( (R')^n \bigr)_{1,1}
        = e^{-h n} (R^n)_{1,1}
        = e^{-h n} \# \cA_n
        \geq C
        .
    \]
    To justify the upper bound, observe that
    $(R^n)_{i,1}$ denotes the number of paths of length $n$ from $\circled{i}$ to $\circled{1}$.
    Thus, with a convention $\# \cA_0 = 1$,
    \[
        (R^n)_{i,1} = \sum_{t=1}^n r_{i+t-1} \# \cA_{n-t}
        .
    \]
    Further, using
    the bounds $r_n \leq C e^{h n} / n^\alpha$ from Proposition~\ref{prop:YTb}
    and $\cA_n \leq C e^{hn}$ from Proposition~\ref{prop:YCb},
    \begin{equation}
        \label{eq:R entry}
        (R^n)_{i,1}
        \leq C \sum_{t=1}^n e^{ h(n + i)} / (i+t-1)^\alpha
        .
    \end{equation}
    By~\eqref{eq:R' R},
    \begin{equation}
        \label{eq:R'1}
        \bigl| (R')^n x \bigr|_1
        = \sum_{i \ge 1} \bigl( (R')^n \bigr)_{i,1}
        = \sum_{i \ge 1} e^{-hn} e^{-h(i-1)} (R^n)_{i,1}
        .
    \end{equation}
    Assembling~\eqref{eq:R' R} and~\eqref{eq:R entry} and using $\alpha >2$,
    \begin{align*}
        \bigl|( R' \bigr)^n x|_1
        & \le C \sum_{i \ge 1} \sum_{t=1}^n (i+t-1)^{-\alpha}
        \le C \sum_{i \ge 1} i^{-\alpha+1}
        \le C
        .
    \end{align*}
    The upper bound in~\eqref{eq:RRR'} is proved.
\end{proof}

At this point, the assumptions of Theorem~\ref{thm:Delta} are verified, 
namely $\sum_n e^{-hn} r_n = 1$ for part~\ref{thm:Delta:mu}, and $r_n = O(e^{hn} / n^\alpha)$,
assuming that
$\alpha = \frac{h - h'}{ s_0 \log 2} > 2$ for part~\ref{thm:Delta:mix} and $\alpha > 3$ for part~\ref{thm:Delta:ASIP}.
Both of these imply that $\sum_n n e^{-h n} r_n < \infty$.

It remains to associate $\mu_\Delta$ with the measure of maximal entropy $\mu_0$ on $M$,
and to show that decay of correlations and ASIP on $\Delta$ translate
to the billiard.

There is a natural association between paths on $\Delta$ and orbits of $T$.
We make this formal by defining the semiconjugacy $\pi \colon \Delta \to M$
(i.e.\ $\pi \circ \sigma = T \circ \pi$). First we define $\pi$ on $\Delta_0$.
Suppose that $x \in \Delta_0$ follows forward trajectory $R_0, R_1, \ldots$
in the sense that
\[
    x = (\ldots, x_{-2}, x_{-1}, E_1, \ldots, E_{|R_0|-1}, R_0, E_1, \ldots, E_{|R_1|-1}, R_1, \ldots)
    .
\]
On the billiard, let $Q_n$ be the maximal s-subrectangle of $D$ contained
in $D \cap R_0 \cdots R_n$.
Here $D$ is the rectangle from~(\ref{A:D}) which we use as a ``base'' for symbolic dynamics,
and $R_0 \cdots R_n$ denotes concatenation of cylinders.
Recall that each $R_k$ contains an s-subrectangle of $D$
whose image under $T^{|R_k|}$ u-crosses $D$.

According to our construction, $Q_n$ are nonempty, closed and nested;
different sequences $R_0, \ldots, R_n$ result in disjoint $Q_n$.
Due to the hyperbolicity of the billiard map, the $Q_n$ are exponentially thin
in the unstable direction (while fully crossing $D$ in the stable direction), so
$\lim_{n \to \infty} Q_n$ is a stable leaf fully crossing $D$.

We see that for $x \in \Delta_0$, its future symbolic itinerary defines a stable leaf fully
crossing $D$, say $W^s$. (And to different $x$ there correspond different leaves.)
Similarly, the past itinerary of $x$ defines a unique unstable leaf $W^u$
fully crossing $D$. Set $\pi(x) = W^s \cap W^u$.
Thus we define $\pi$ on $\Delta_0$, and the semiconjugacy property defines an extension to the whole of $\Delta$:
for a general $x \in \Delta$ there is $n$ such that $\sigma^n (x) \in \Delta_0$,
and we set $\pi(x) = T^{-n} \pi(\sigma^n(x))$.

Observe that by construction:
\begin{itemize}
    \item $\pi$ is indeed a semiconjugacy, i.e.\ $\pi \circ \sigma = T \circ \pi$,
    \item $\pi$ in injective on $\Delta_0$, and
    \item $\pi(\Delta_0) \subset D_*$, where $D_*$ is the Cantor rectangle corresponding to $D$
        as in Definition~\ref{def:cantor}.
\end{itemize}

\begin{lemma}
    \label{lem:buz99}
    If $\nu$ is a probability measure on $\Delta$
    and $\mu = \pi_* \nu$, then $\nu$ and $\mu$ have equal entropies.
    In particular, $\pi_* \mu_\Delta$ is the measure of maximal entropy $\mu_0$.
\end{lemma}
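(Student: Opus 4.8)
The plan is to prove the two assertions in turn. \textbf{Equality of entropies.} The map $\pi \colon \Delta \to M$ is a semiconjugacy, so $h_\mu(T) \le h_\nu(\sigma)$ automatically by the variational properties of entropy under factor maps; the content is the reverse inequality. For this I would exploit that $\pi$ is \emph{injective on} $\Delta_0$, and that $\Delta_0$ meets $\mu_\Delta$-a.e.\ (more generally $\nu$-a.e.) orbit, since every $x \in \Delta$ visits $\circled{1}$ infinitely often in the future by definition of $\Delta$. The standard tool here is Abramov's formula applied to the first-return map: $\sigma_\tau \colon \Delta_0 \to \Delta_0$ is a Bernoulli shift on the alphabet $\cR$, and $h_\nu(\sigma) = h_{\nu_0}(\sigma_\tau) \, \nu(\Delta_0)$ where $\nu_0 = \nu|_{\Delta_0} / \nu(\Delta_0)$; likewise on the billiard side, the first return map of $T$ to $D_*$ (or rather the associated induced system) satisfies an Abramov relation with return time $\tau \circ \pi^{-1}$. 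Since $\pi$ conjugates $\sigma_\tau$ on $\Delta_0$ to the corresponding return map on $\pi(\Delta_0) \subset D_*$ \emph{injectively}, and the roof functions match, the two induced entropies coincide, hence so do $h_\nu(\sigma)$ and $h_{\pi_*\nu}(T)$. I would cite Buzzi's lemma on entropy of factors that are injective on a set of full measure (the label \texttt{buz99} suggests the intended reference); the key hypotheses to check are that $\pi$ is Borel measurable and injective on a set meeting a.e.\ orbit.

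\textbf{Identifying $\pi_*\mu_\Delta$ with $\mu_0$.} Granting the entropy equality, $h_{\pi_*\mu_\Delta}(T) = h_{\mu_\Delta}(\sigma) = \log \lambda = h$ by Theorem~\ref{thm:Delta}\ref{thm:Delta:mu} together with Proposition~\ref{prop:works}, which gives $\lambda = e^h$. Thus $\pi_*\mu_\Delta$ is a $T$-invariant Borel probability measure of maximal entropy. By the uniqueness of the MME established in \cite{BD20}, $\pi_*\mu_\Delta = \mu_0$. The only subtlety is that $\pi_*\mu_\Delta$ is genuinely a probability measure on $M$ (immediate, since $\pi$ is defined everywhere on $\Delta$ and $\mu_\Delta(\Delta) = 1$) and $T$-invariant (immediate from $\pi \circ \sigma = T \circ \pi$ and $\sigma$-invariance of $\mu_\Delta$).

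\textbf{Main obstacle.} I expect the delicate point to be the entropy equality rather than the identification: one must verify that $\pi$ is injective on a set of full $\nu$-measure for \emph{every} $\nu$, not just for $\mu_\Delta$. Here the right formulation is that $\pi$ restricted to $\bigcup_{n \ge 0} \sigma^{-n}(\Delta_0)$ is countable-to-one in a controlled way — distinct points of $\Delta_0$ go to distinct points of $D_*$ because distinct forward itineraries give distinct stable leaves and distinct backward itineraries give distinct unstable leaves (this is exactly what the construction of $\pi$ records), and a general $x$ is recovered from $\sigma^n(x) \in \Delta_0$ by applying $T^{-n}$. One must also confirm the nestedness/exponential-thinness claim used to define $\pi$ on $\Delta_0$, i.e.\ that $\lim_n Q_n$ is a single stable leaf; this uses uniform hyperbolicity \eqref{eq:hyp} and property~\eqref{A:D-period} so that the $Q_n$ never escape $D$. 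Once injectivity on a full-measure, orbit-meeting set is in hand, the Abramov/Buzzi argument is routine and the rest follows from uniqueness of the MME.
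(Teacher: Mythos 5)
Your proposal ultimately arrives at the paper's argument, but you lead with a detour that is not how the paper proceeds and is not as clean as you suggest. The paper's proof is short and direct: $\pi$ is injective on $\Delta_0$ by construction (distinct forward itineraries single out distinct stable leaves, distinct backward itineraries distinct unstable leaves); since $T$ and $\sigma$ are invertible and $\pi$ is a semiconjugacy, $\pi$ is injective on each $\sigma^n(\Delta_0)$; as $\Delta = \bigcup_n \sigma^n(\Delta_0)$, every $z \in M$ has at most one $\pi$-preimage in each piece, so $\pi$ is at most countable-to-one. Buzzi's Proposition~2.8 in~\cite{B99} --- which is a statement about \emph{countable-to-one} Borel factor maps, not about maps injective on a set of full measure --- then gives $h_\nu(\sigma) = h_{\pi_*\nu}(T)$. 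Combining with Proposition~\ref{prop:works} and Theorem~\ref{thm:Delta}\ref{thm:Delta:mu} (so $h_{\mu_\Delta} = \log\lambda = h$) and uniqueness of the MME from~\cite{BD20} yields $\pi_*\mu_\Delta = \mu_0$. You do spell out exactly this countable-to-one structure in your final paragraph, so the core idea is present.

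The Abramov route you lead with has gaps you do not address: to run it on the billiard side you would need $(M,\mu,T)$ to be a suspension over the induced system on $\pi(\Delta_0)$ with roof $\tau\circ(\pi|_{\Delta_0})^{-1}$, but it is not a priori clear that the $\mu$-return time to $\pi(\Delta_0)$ matches $\tau$ (a point $\pi(T^k x)$ with $0<k<\tau(x)$ could lie in $\pi(\Delta_0)$ via a different preimage), nor that $\mu(\pi(\Delta_0)) = \nu(\Delta_0)$, which would require $\pi^{-1}(\pi(\Delta_0))\setminus\Delta_0$ to be $\nu$-null. The countable-to-one formulation avoids all of this and is the right tool here.
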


\begin{proof}
    Restricted to $\Delta_0$, the map $\pi$ is injective.
    Moreover, since $T$ and $\sigma$ are invertible, $\pi$ is injective on
    $\sigma^n(\Delta_0)$ for each $n$.
    Since $\Delta = \cup_n \sigma^n(\Delta_0)$ and
    every $z \in M$ has at most one preimage in each $\sigma^n(\Delta_0)$,
    we observe that $\pi$ is at most countable-to-one.
    By~\cite[Proposition~2.8]{B99}, the entropy of $\pi_* \mu_\Delta$ is equal to
    that of $\mu_\Delta$, and $\pi_* \mu_\Delta = \mu_0$ by
    uniqueness of the measure of maximal entropy~\cite[Theorem~2.4]{BD20}.
\end{proof}

\begin{lemma}
    \label{lem:hyp}
    $\pi$ is Lipschitz continuous.
\end{lemma}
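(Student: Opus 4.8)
The plan is to estimate $d(\pi(x), \pi(x'))$ in terms of the separation distance $d(x,x') = \Lambda^{-s(x,x')}$ from~\eqref{eq:sep}, and show the former is bounded by a constant times the latter. By the semiconjugacy property and $T$-invariance of the metric estimates up to bounded distortion, it suffices to treat the case $x, x' \in \Delta_0$: a general pair $x, x'$ can be shifted into $\Delta_0$ by a bounded power of $\sigma$ (bounded in terms of $s(x,x')$, since once the separation time is positive the initial return times $t_0$ agree), and applying a bounded power of $T^{\pm 1}$ changes distances by at most a fixed multiplicative constant. So fix $x, x' \in \Delta_0$ with $s(x,x') = N \geq 1$; this means the return times $t_n$ and return symbols $R_n$ agree for $|n| \leq N-1$.

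First I would control the \emph{future}. By construction $\pi(x)$ lies on the stable leaf $W^s = \lim_n Q_n$, where $Q_n$ is the maximal s-subrectangle of $D$ inside $D \cap R_0 \cdots R_n$, and similarly $\pi(x')$ lies on $W^{s\prime} = \lim_n Q'_n$. Since $R_0, \ldots, R_{N-1}$ agree, the cylinders $R_0 \cdots R_{N-1}$ coincide, so $Q_{k}$ and $Q'_{k}$ are s-subrectangles of the \emph{same} s-subrectangle $Q_{N-1} = Q'_{N-1}$ of $D$ for all indices up through the point where the itineraries first differ. Now $\pi(x), \pi(x') \in Q_{N-1}$, a rectangle which (by the nesting and exponential thinness in the unstable direction noted just before Lemma~\ref{lem:buz99}) has unstable diameter at most $C \Lambda^{-m}$, where $m = t_N - t_0 = |R_0| + \cdots + |R_{N-1}|$ is the total symbolic length consumed; since each $|R_j| \geq 1$ we have $m \geq N$, so the unstable distance between $\pi(x)$ and $\pi(x')$ along $D$ is at most $C \Lambda^{-N}$. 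The same argument applied to the \emph{past} itinerary, which defines the unstable leaf $W^u$ through $\pi(x)$ and uses that $R_{-1}, \ldots, R_{-(N-1)}$ agree, gives that $\pi(x)$ and $\pi(x')$ lie in a common u-subrectangle of $D$ of stable diameter at most $C\Lambda^{-N}$. Combining the two (using that $D$ is a rectangle bounded by stable and unstable leaves, so its internal metric is comparable to the product of the stable and unstable internal metrics, with bounded distortion because unstable/stable leaves have curvature bounded by $C_u$), we get $d(\pi(x), \pi(x')) \leq C \Lambda^{-N} = C\, d(x,x')$.

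The main obstacle I anticipate is the bookkeeping at the boundary between agreement and disagreement: one must be careful that $\pi(x)$ and $\pi(x')$ genuinely lie in the \emph{same} rectangle $Q_{N-1}$ (not merely in rectangles that happen to be close), and that the ``exponential thinness'' constant is uniform — this follows from uniform hyperbolicity~\eqref{eq:hyp} together with the fact that $Q_{n+1}$ is obtained from $Q_n$ by pulling back across one more return, each return contracting the unstable direction by at least a factor $C_e \Lambda^{|R_{n+1}|}$ up to bounded distortion (distortion bounds for the billiard map along unstable/stable manifolds are standard, see \cite[Chapter~5]{CM06}). A secondary point is comparing the internal (leaf-wise) metric on $D$ with the Riemannian metric $d(\cdot,\cdot)$ on $M$ appearing in~\eqref{eq:holder}: since $D$ is a fixed rectangle with leaves of bounded curvature and transverse stable/unstable directions, these two metrics are bi-Lipschitz equivalent on $D$ with a constant depending only on the billiard, which absorbs into $C$. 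This completes the proof that $\pi$ is Lipschitz.
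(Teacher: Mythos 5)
Your core idea --- agreement of the symbolic itineraries over a window of size controlled by $s(x,x')$, combined with uniform hyperbolicity, gives an exponential bound on the distance of images --- is precisely the paper's argument, and the estimates you give for $x, x' \in \Delta_0$ (nested s-subrectangles $Q_{N-1}$ of exponentially small unstable width, and the dual statement coming from the past itinerary) are essentially correct. However, the reduction to $\Delta_0$ in your first paragraph has a genuine gap. You shift by $\sigma^{t_0}$ and claim this is a ``bounded power of $\sigma$ (bounded in terms of $s(x,x')$)''; but $t_0$ is the first nonnegative return time to $\Delta_0$ and is completely independent of the separation time --- it can be arbitrarily large even when $s(x,x')$ is small. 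Consequently the assertion ``applying a bounded power of $T^{\pm 1}$ changes distances by at most a fixed multiplicative constant'' does not apply: $T^{-t_0}$ expands stable distances by a factor on the order of $\Lambda^{t_0}$, which is not fixed. The final bound is nonetheless true, but for a reason your argument does not record: the stable distance between $\pi(\sigma^{t_0}(x))$ and $\pi(\sigma^{t_0}(x'))$ is not merely $O(\Lambda^{-N})$ but in fact $O(\Lambda^{-(t_0 - t_{-N+1})})$, and the extra factor $\Lambda^{-t_0}$ exactly cancels the stable expansion under $T^{-t_0}$; as written you only carry the weaker $O(\Lambda^{-N})$ bound through the pullback and that step fails.

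The paper sidesteps the reduction entirely and argues directly at time $0$. Since the itineraries agree over $[t_{-s+1}, t_{s-1}]$ with $s = s(x,x')$, both $T^{t_{s-1}}(\pi(x))$ and $T^{t_{s-1}}(\pi(x'))$ lie in $D$, and likewise at time $t_{-s+1}$. Hyperbolicity~\eqref{eq:hyp} then gives
\[
    d(\pi(x), \pi(x')) \le C_e^{-1}\Lambda^{-\min\{ t_{s-1},\, -t_{-s+1}\}},
\]
and since $|t_j| \ge j$ (every return to $\Delta_0$ takes at least one billiard step), the minimum is at least $s-1$, yielding the Lipschitz bound. If you remove the $\Delta_0$ reduction and run your $Q_{N-1}$ argument on the orbit segments $T^{t_0}(\pi(x)), T^{t_0}(\pi(x'))$ while tracking the billiard times $t_{s-1}$ and $t_{-s+1}$ rather than just the symbolic index $N$, you recover the paper's estimate.
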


\begin{proof}
    Let $x, x' \in \Delta$ with the associated times $t_n, t'_n$ of visits to $\Delta_0$
    and separation time $s(x,x')$ as in~\eqref{eq:sep}.
    Using that $|t_j| \ge j$ and hyperbolicity~\eqref{eq:hyp}, one has
    \[
        d(\pi(x) , \pi(x')) \le C_e^{-1} \Lambda^{- \min \{ t_{s(x,x')-1}, - t_{-s(x,x')+1} \} }
        \le \Lambda C_e^{-1} d(x,x')
        .
    \]
\end{proof}

Lemmas~\ref{lem:buz99} and~\ref{lem:hyp} complete the proof of
Theorems~\ref{thm:decay} and~\ref{thm:ASIP}.
Indeed, if $u, v \colon M \to \bR$ are H\"older observables on $M$,
then their lifts $\tilde{u} = u \circ \pi$ and $\tilde{v} = v \circ \pi$ are H\"older on $\Delta$.
From $\pi_* \mu_\Delta = \mu_0$ and $v \circ T^n \circ \pi = \tilde{v} \circ \sigma^n$, we have
\[
    \int_M u \, v \circ T^n \, d\mu_0
    = \int_{\Delta} \tilde{u} \, \tilde{v} \circ \sigma^n \, d\mu_{\Delta}
    .
\]
Similarly, the random process $(v \circ T^n)_n$ on the probability space $(M, \mu_0)$
is equal in law to the random process $(\tilde{v} \circ \sigma^n)_n$ on $(\Delta, \mu_\Delta)$.
Thus decay of correlations and the ASIP for H\"older observables on $M$ follow from the
corresponding results on $\Delta$.

Theorems~\ref{thm:decay} and~\ref{thm:ASIP} are proved.

%%%%%%%%%%%%%%%%%%%%%%%%%%%%%%%%%%%

\subsection{Super-polynomial mixing for typical dispersing billiard tables}
\label{sec:superpoly}

In this section, we prove Corollary~\ref{cor:super}. That is, we prove that
the rate of mixing for a dispersing billiard is super-polynomial
if the sequence of complexities is bounded:
\begin{equation}
\label{eq:conjecture}
\mbox{There exists $K>0$ such that $K_n \le K$ for all $n \ge 0$,}
\end{equation}
where $K_n$ denotes the maximal number of curves in $\cS_n$ intersecting at one point. 
Assuming Conjecture~\ref{conj}, this property holds for typical configurations
of finite horizon dispersing billiards.

The main consequence of~\eqref{eq:conjecture} is that the parameter $s_0$ in~\eqref{eq:sparse}
can be chosen as small as desired by choosing $n_0$ large and $\vf_0$ close to $\pi/2$.

\begin{prop}
    \label{prop:s_0}
    If the billiard table satisfies \eqref{eq:conjecture},
    then for any $\ve_0 > 0$, there exists $n_0>0$ and $\vf_0 \in (0, \pi/2)$
    such that $s_0(\vf_0, n_0) < \ve_0$.
\end{prop}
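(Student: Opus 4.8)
The plan is to control the maximal number of near-tangential collisions in a length-$n_0$ orbit segment by the complexity constant $K$ \emph{uniformly in} $n_0$, and then let $n_0\to\infty$. For $\vf_0\in(0,\pi/2)$ and $n_0\ge1$, put
\[
    N(\vf_0,n_0)=\sup_{x\in M}\#\bigl\{0\le k<n_0:|\vf(T^kx)|>\vf_0\bigr\},
\]
so that $s_0(\vf_0,n_0)=\min\{1,N(\vf_0,n_0)/n_0\}$. For a fixed $n_0$ this quantity is finite, integer-valued and non-increasing in $\vf_0$ — the supremum is attained because $M$ is compact, $\cM_{n_0}$ is finite, and each $\vf\circ T^k$ extends continuously to the closure of every element of $\cM_{n_0}$ — so the limit $N_\infty(n_0):=\lim_{\vf_0\uparrow\pi/2}N(\vf_0,n_0)$ exists and equals $N(\vf_0,n_0)$ for all $\vf_0$ sufficiently close to $\pi/2$. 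It therefore suffices to show $N_\infty(n_0)\le K$ for every $n_0$: given $\ve_0\in(0,1)$ one picks $n_0>K/\ve_0$ and then $\vf_0<\pi/2$ with $N(\vf_0,n_0)=N_\infty(n_0)\le K$, whence $s_0(\vf_0,n_0)\le K/n_0<\ve_0$ (the case $\ve_0\ge1$ is covered by the finite-horizon bound $s_0<1$).

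To prove $N_\infty(n_0)\le K$, suppose $N_\infty(n_0)\ge m$. For each integer $j\ge1$ I would choose $x_j$ in the interior of some $A_j\in\cM_{n_0}$ whose length-$n_0$ orbit segment makes at least $m$ collisions with $|\vf|>\pi/2-1/j$ (possible since the count is lower semicontinuous on each cylinder and $N(\pi/2-1/j,n_0)\ge m$), and single out $m$ of the corresponding collision times. Passing to a subsequence I may assume that these times form a fixed set $\{k_1<\cdots<k_m\}\subset\{0,\dots,n_0-1\}$, that $A_j=A$ for all $j$, and that $x_j\to x_*\in\overline A$. Since $T^{k_i}$ extends continuously to $\overline A$ and $|\vf(T^{k_i}x_j)|\to\pi/2$, we get $T^{k_i}x_*\in\cS_0$, so $x_*$ lies on a smooth curve component $c_i$ of $T^{-k_i}\cS_0\subset\cS_{n_0}$. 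The heart of the argument is that $c_1,\dots,c_m$ are pairwise distinct curves through $x_*$: if $c_i$ and $c_{i'}$ with $k_i<k_{i'}$ shared an arc $\eta$, then $\gamma:=T^{k_i}\eta$ would be a nondegenerate arc contained in both $\cS_0$ and $T^{-(k_{i'}-k_i)}\cS_0$; but the singularity curves $T^{-\ell}\cS_0$ with $\ell\ge1$ are decreasing and hence transverse to the horizontal set $\cS_0$ (see \cite[Section~4.5]{CM06}), so their intersection consists of isolated points — a contradiction. Therefore $x_*$ lies on $m$ distinct curves of $\cS_{n_0}$, whence $m\le K_{n_0}\le K$.

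I expect the distinctness of the limiting curves $c_1,\dots,c_m$ to be the main obstacle: this is precisely the point at which the billiard-specific geometry of singularities — the transversality of the curves $T^{\mp\ell}\cS_0$ to $\cS_0$, equivalently their alignment with the stable/unstable cones — is indispensable, and some care is also needed to confirm that each $c_i$ is a genuine curve component through $x_*$ (rather than, say, an isolated point) and that the germs at $x_*$ counted by $K_{n_0}$ really are these $m$ distinct smooth curves, as well as to handle the one-sided extensions of $T^{k_i}$ along the singularity curves. The remaining ingredients — finiteness, integrality and monotonicity of $N(\vf_0,n_0)$, the continuous extension of $T^k|_A$ to $\overline A$ for $A\in\cM_{n_0}$, and the diagonal subsequence extraction — are routine.
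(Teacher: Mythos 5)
Your proof is correct, reaches the same quantitative bound $s_0(\vf_0,n_0)\le K/n_0$, and is genuinely different in structure from the paper's. The paper argues constructively: it lists the finitely many intersection points $\{z_i\}$ of $\cS_{n_0}$, builds explicit nested $\ve$-neighborhoods of the $z_i$, of the remaining boundary arcs of $\cM_{n_0}$, and of the cell interiors, and shows that the number of near-tangential passages in a window of length $n_0$ is at most $K$, at most $1$, and $0$ on these three regions respectively; finally it takes $\vf_0 = \pi/2 - \min\ve_j$. You instead run the contrapositive via compactness, extracting a limit point $x_*$ carrying $m$ distinct germs of $\cS_{n_0}$ whenever $N(\vf_0,n_0)\ge m$ persists as $\vf_0\uparrow\pi/2$, so $m\le K_{n_0}\le K$. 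Your route isolates more sharply where the billiard geometry enters: the distinctness of the germs $c_i$, i.e.\ that no nondegenerate arc of $\cS_0$ is carried into $\cS_0$ by a positive iterate of $T$ (one-sided extension). The paper relies on the same fact implicitly --- when it asserts that the $\le K$ curves through $z_i$ yield only $\le K$ dangerous times $\tau_{ij}$, this would fail if a single curve could lie in $T^{-\tau}\cS_0$ for several $\tau$ --- but your proposal makes the dependence explicit and supplies the transversality argument. The remaining items you flag (one-sided extensions of $T^k$ to $\overline{A}$, each $c_i$ being a genuine curve branch through $x_*$, the convention in the definition of $K_n$) are indeed routine and are likewise elided in the paper's version.
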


\begin{proof}
    Let $\ve_0 > 0$ and choose $n_0 \in \mathbb{N}$ such that $K/n_0 < \ve_0$.
    The singularity set $\cS_{n_0}$ contains finitely many curves,
    and thus finitely many intersection points of these curves, 
    which we label $\{ z_i \}_{i=1}^L$, where $L$ depends on $n_0$ 
    (indeed, $L$ grows exponentially as a function of $n_0$).
    We denote by
    $N_\ve(\cdot)$ the $\ve$-neighborhood of a set in $M$. 

    By assumption, the number of curves in $\cS_{n_0}$ intersecting at each $z_i$ is at most $K$.  So 
    for $\ve > 0$ sufficiently small, the 
    $\ve$-neighborhood of $z_i$ is split into at most $K+1$ sectors by these curves.  
    In particular, the curves intersecting at $z_i$ belong to $\cup_j T^{-\tau_{ij}}(\cS_0)$ for
    at most $K$ times $\tau_{ij} \in [0, n_0]$.

    By the continuity of $T^{n_0}$ on each sector, there exist $\ve_1(z_i), \ve_2(z_i)>0$ such that
    if $x \in N_{\ve_1}(z_i)$, then $T^n(x)$ can only enter $N_{\ve_2}(\cS_0)$ at the times $\tau_{ij}$.
    For $j=1,2$, set $\ve_j = \min_i \ve_j(z_i)$, and note that $\ve_1, \ve_2 > 0$.
    Then
    \[
        \# \bigl\{0 \leq n \leq n_0 : T^n(x) \in N_{\ve_2}(\cS_0) \bigr\} \le K 
        \qquad \text{for all} \qquad
        x \in \cup_i N_{\ve_1}(z_i)
        .
    \]

    Now we turn our attention to the elements of $\cM_{n_0}$,
    the partition of $M \setminus \cS_{n_0}$ into connected components.
    Let $A \in \cM_{n_0}$ and $A^\diamond = A \setminus \cup_i N_{\ve_1}(z_i)$.
    For each $x \in \partial A^\diamond \cap \cS_{n_0}$, we have
    $\# \{ 0 \leq n \leq n_0 : T^n(x) \in \cS_0 \} \leq 1$.
    Remark that each connected component  $S \in \partial A^\diamond \cap \cS_{n_0}$ is adjacent to 
    $A$ and another cell, $A'$, and $T^{n_0}$ is continuous on either $A \cup S$ or $A' \cup S$.  In the former
    case, there exist $\ve_3, \ve_4 > 0$ such that 
    \[
        \# \bigl\{0 \leq n \leq n_0 : T^n(x) \in N_{\ve_4}(\cS_0) \bigr\} = 1
        \qquad \text{for all} \qquad
        x \in N_{\ve_3} (S)
        .
    \]
    In the latter case, there exist $\ve_3, \ve_4>0$ such that
    \[
        \# \bigl\{0 \leq n \leq n_0 : T^n(x) \in N_{\ve_4}(\cS_0) \bigr\} = 0
        \qquad \text{for all} \qquad
        x \in N_{\ve_3} (S)
        .
    \]
    Finally, by continuity of $T^{n_0}$ on $A \setminus N_{\ve_3}(\partial A)$, there exists $\ve_5>0$ such that
    \[
        \# \bigl\{0 \leq n \leq n_0 : T^n(x) \in N_{\ve_5}(\cS_0) \bigr\} = 0
        \qquad \text{for all} \qquad
        x \in A \setminus N_{\ve_3} (\partial A)
        .
    \]
    By the finiteness of $\cM_{n_0}$ and $\cS_{n_0}$, we may choose $\ve_3, \ve_4, \ve_5 > 0$
    which work for every $A \in \cM_{n_0}$ and $S$ as above.

    Set $\vf_0 = \pi/2 - \min \{ \ve_1, \ve_2, \ve_3, \ve_4, \ve_5 \}$.
    Then $s_0(\vf_0, n_0) \le K/n_0 < \ve_0$ as required. 
\end{proof}

From the form of the exponent in Theorem~\ref{thm:decay}, Proposition~\ref{prop:s_0}
immediately implies that the rate of decay of correlations for $\mu_0$ is super-polynomial and
Corollary~\ref{cor:super} is proved.

\section*{Acknowledgements}
The authors are deeply grateful to:
\begin{itemize}
    \item V.~Baladi for suggesting and initiating this project,
    \item P.~B\'alint and I.P.~T\'oth for discussions regarding
        the complexity conjecture for typical dispersing billiards,
    \item J.~Carrand for making precise the use of $s_0$ in~\eqref{A:super-growth},
    \item N.~Dobbs, J.~Sedro, F.~S\'elley and C.~Wormell for helpful discussions,
    \item Fairfield University for its hospitality during March 2022,
    \item The anonymous referee for many helpful suggestions and clarifications.
\end{itemize}
MD is partially supported by NSF grant DMS 2055070.  AK was supported by
the European Research Council (ERC) under the European Union's Horizon 2020
research and innovation programme (grant agreement No 787304).
While working on corrections, AK was supported by EPSRC grant EP/V053493/1.

\appendix

\section{Induced polynomial tails}

In this appendix we prove the following proposition:

\begin{prop}
    \label{prop:ktails}
    Suppose that $X_1, X_2, \ldots$ and $\tau$ are $\{0, 1, 2, \ldots \}$-valued random variables
    with 
    \[
        \bP (X_{n+1} = k \mid \tau \geq n + 1, X_1, \ldots, X_n) \leq A k^{-\alpha}
        \quad \text{for all} \quad
        k \geq 1 , \ n \geq 0
        ,
    \]
    and
    \[
        \bP (\tau \geq n + 1 \mid \tau \geq n, X_1, \ldots, X_n) \leq \theta
        \quad \text{for all} \quad
        n \geq 1
        ,
    \]
    where $\alpha > 1$, $A > 0$ and $\theta \in (0,1)$.
    Let $S = \sum_{n \leq \tau} X_n$.
    Then there is $C > 0$, depending only on $A$, $\alpha$ and $\theta$,
    such that $\bP(S = k) \leq C k^{-\alpha}$ for all $k \geq 1$.
\end{prop}

\begin{proof}
    Let $\eta = \theta^{1/2}$.
    Fix $B, K, M, N$ large so that
    \[
        A \theta \eta^{-1} C_M
        \leq \frac{1 - \theta^{1/4}}{4}
        , \quad \text{where} \quad
        C_M = 2^{1 + \alpha} \sum_{\ell \geq M} \ell^{-\alpha}
        ,
    \]
    \[
        \frac{N + M}{N} \frac{(K - M)^{-\alpha}}{K^{-\alpha}}
        \leq \theta^{-1/4}
        , \qquad
        \frac{A}{B} \theta^{-1/4}
        \leq \min \Bigl\{ \theta^{1/4}, \frac{1-\theta^{1/4}}{4} \Bigr\} 
        .
    \]
    Denote $S_n = \sum_{k=1}^n X_k$.
    Observe that the following bound holds for $n = 1$:
    \begin{equation}
        \label{eq:PSn}
        \bP(a \leq S_n < b, \tau \geq n)
        \leq B \eta^n (b-a) a^{-\alpha}
        \quad \text{for all} \quad a \geq K \quad \text{and} \quad b-a \geq N
        .
    \end{equation}
    Suppose that~\eqref{eq:PSn} holds for $n \geq 1$.
    Estimate for $a \geq K$ and $b - a \geq N$:
    \begin{align*}
        \bP( a - M \leq S_n < b, \tau \geq n + 1)
        & = \bP(\tau \geq n + 1 \mid a - M \leq S_n < b, \tau \geq n)
        \bP(a - M \leq S_n < b, \tau \geq n)
        \\
        & \leq \theta B \eta^n (b-a+M) (a-M)^{-\alpha}
        .
    \end{align*}
    \begin{align*}
        \bP(a - M \leq X_{n+1} < b, \tau \geq n + 1)
        & = \bP(a - M \leq X_{n+1} < b \mid \tau \geq n + 1) \bP(\tau \geq n + 1)
        \\
        & \leq A (b-a+M) (a-M)^{-\alpha} \theta^{n+1}
        .
    \end{align*}
    \begin{align*}
        \sum_{M \leq \ell < a - M}
        & \bP(X_{n+1} = \ell, a - \ell \leq S_n < b - \ell, \tau \geq n + 1)
        \\
        & = \sum_{M \leq \ell < a - M}
        \bP(X_{n + 1} = \ell \mid a - \ell \leq S_n < b - \ell, \tau \geq n + 1)
        \\
        & \qquad \bP(\tau \geq n + 1 \mid a - \ell \leq S_n < b - \ell, \tau \geq n)
        \bP(a - \ell \leq S_n < b - \ell, \tau \geq n)
        \\
        & \leq A B \theta \eta^n (b-a) \sum_{M \leq \ell < a - M}
        \ell^{-\alpha} (a - \ell)^{-\alpha}
        \leq A B \theta \eta^n (b-a) a^{-\alpha} C_M         
        .
    \end{align*}
    Using the three estimates above,
    \begin{align*}
        \bP( a \leq S_{n+1}
        & < b, \tau \geq n + 1)
        \\
        & \leq \bP( a - M \leq S_n < b, \tau \geq n + 1)
        + \bP(a - M \leq X_{n+1} < b, \tau \geq n + 1)
        \\
        & \qquad + \sum_{M \leq \ell < a - M}
        \bP(X_{n+1} = \ell, a - \ell \leq S_n < b - \ell, \tau \geq n + 1)
        \\
        & \leq B \eta^{n+1} (b-a) a^{-\alpha}
        \Bigl(
        \theta \eta^{-1} \frac{b - a + M}{b - a} \frac{(a - M)^{-\alpha}}{a^{-\alpha}}
        \\
        & \qquad + \theta^{n+1} \eta^{-(n+1)}
        \frac{A}{B} \frac{b - a + M}{b - a} \frac{(a - M)^{-\alpha}}{a^{-\alpha}}
        + A \theta \eta^{-1} C_M
        \Bigr)
        \\
        & \leq B \eta^{n+1} (b-a) a^{-\alpha}
        .
    \end{align*}
    This creates an induction step which shows that~\eqref{eq:PSn} holds for all $n$.    

    Now, for $k \geq K$,
    \begin{align*}
        \bP(S = k)
        & = \sum_{n \geq 1} \bP(S_n = k, \tau = n)
        \leq \sum_{n \geq 1} \bP(k \leq S_n < k + N, \tau \geq n)
        \\
        & \leq \sum_{n \geq 1} B \eta^n N k^{-\alpha}
        \leq \frac{B N \eta}{1 - \eta} k^{-\alpha}
        .
    \end{align*}
    The result follows.
\end{proof}


\begin{thebibliography}{CDKM20}

\bibitem[B00]{Baladi}  V.~Baladi, \emph{Positive transfer operators and decay of correlations,} World Scientific, Singapore, 2000.

\bibitem[BD20]{BD20} V.~Baladi, M.F.~Demers,
\emph{On the measure of maximal entropy for finite horizon Sinai billiard maps,}
J. Amer. Math. Soc. \textbf{33} (2020), 381--449.

\bibitem[BD21]{BD21} V.~Baladi and M.F.~Demers,
\emph{Thermodynamic formalism for dispersing billiards,}
J. Modern Dynamics {\bf 18} (2022), 415--493.

\bibitem[BT08a]{Balint Toth} P.~B\'alint, I.P.~T\'oth,
\emph{An application of Young's tower method: Exponential decay of 
correlations in multidimensional dispersing billiards,}
Preprints of the Erwin Shr\"odinger International Institute for Mathematical Physics (2008)
\url{https://www.esi.ac.at/preprints/esi2084.pdf}

\bibitem[BG95]{baras} P.~Gaspard and F.~Baras, \emph{Chaotic scattering and diffusion in 
the Lorentz gas}, Phys. Rev. E (3) {\bf 51}:6 (1995), 5332--5352.

\bibitem[BT08b]{Bruin Todd} H.~Bruin, M.~Todd,
\emph{Equilibrium states for interval maps: Potentials with $\sup \vf - \inf \vf < h_{top}(f)$,}
Commun. Math. Phys. {\bf 283} (2008), 579--611.

\bibitem[BSC91]{BSC} L.~Bunimovich, Ya.G.~Sinai, N.~Chernov,
\emph{Statistical properties of two-dimensional dispersing billiards,}
Russian Math. Surveys {\bf 46} (1991), 47--106.

\bibitem[B99]{B99} J.~Buzzi,
\emph{Markov extensions for multi-dimensional dynamical systems,}
Israel J. Math. \textbf{112} (1999), 357--380.

\bibitem[BCS]{BCS} J.~Buzzi, S.~Crovisier, O.~Sarig,
\emph{Dynamics of smooth surface diffeomorphisms: Spectral gap and stochastic properties,}
ICTP Conference \emph{Markov Partitions and Young Towers,} 2021.
Youtube video \url{https://www.youtube.com/watch?v=vaO0lFhkfAo}

\bibitem[C22]{carrand}  J.~Carrand, \emph{A family of natural equilibrium measures for Sinai billiard flows}, 
 arXiv:2208.14444v2 (March 2023).

\bibitem[CG12]{CG12} J.-R.~Chazottes, S.~Gou\"ezel,
\emph{Optimal concentration inequalities for dynamical systems,}
Commun. Math. Phys. \textbf{316} (2012), 843--889.

\bibitem[C01]{C01} N.~Chernov,
\emph{Sinai billiards under small external forces,}
Ann. Henri Poincar\'e \textbf{2} (2001), 197--236.

\bibitem[CH96]{CH96} N.~Chernov, C.~Haskell,
\emph{Nonuniformly hyperbolic K-systems are Bernoulli,}
Ergodic Theory Dynam. Systems \textbf{16} (1996), 19--44.

\bibitem[CM06]{CM06} N.~Chernov, R.~Markarian,
\emph{Chaotic Billiards,}
Math. Surveys Monogr. \textbf{127} (2006), 327 pp.

\bibitem[CDKM20]{CDKM20} C.~Cuny, J.~Dedecker, A.~Korepanov and F.~Merlev\`ede,
\emph{Rates in almost sure invariance principle for slowly mixing dynamical systems,}
Ergod. Th. and Dynam. Sys. {\bf 40} (2020), 2317--2348.

\bibitem[CDKM23]{CDKM23} C.~Cuny, J.~Dedecker, A.~Korepanov and F.~Merlev\`ede,
\emph{Rates in almost sure invariance principle for nonuniformly hyperbolic maps,}
arXiv:2307.12714 (2023).

\bibitem[D21]{D21} M.~Demers,
\emph{Uniqueness and exponential mixing for the measure of maximal entropy for piecewise hyperbolic maps,}
Discrete Contin. Dyn. Syst. Special issue celebrating 25 years of DCDS \textbf{41} (2021), 217--256.

%\bibitem[G06]{G06} S.~Gou\"ezel,
%\emph{Decay of correlations for nonuniformly expanding systems,}
%Bull. Soc. Math. France \textbf{134} (2006) 1--31.

\bibitem[KKM19]{KKM19} A.~Korepanov, Z.~Kosloff, I.~Melbourne,
\emph{Explicit coupling argument for nonuniformly hyperbolic transformations,}
Proc. Roy. Soc. Edinbourgh Sect. A. \emph{149} (2019), 101--130.

\bibitem[GO74]{GaOr} G.~Gallavotti, D.S.~Ornstein,
\emph{Billiards and Bernoulli schemes,}
Commun. Math. Phys. {\bf 38} (1974), 83--101.

\bibitem[G97]{garrido} P.L.~Garrido, \emph{Kolmogorov-Sinai entropy, Lyapunov exponents,
and mean free time in billiard systems}, J. Statist. Phys. {\bf 88} (1997), no. 3-4, 807--824.

\bibitem[IT10]{IT10} G.~Iommi, M.~Todd,
\emph{Natural equilibrium states for multimodal maps,}
Commun. Math. Phys. {\bf 300} (2010), 65--94.

\bibitem[MN05]{MN05} I.~Melbourne, M.~Nicol,
\emph{Almost sure invariance principle for nonuniformly hyperbolic systems,}
Commun. Math. Phys. {\bf 260} (2005), 393--401.

\bibitem[MT14]{MT14} I.~Melbourne, D.~Terhesiu,
\emph{Decay of correlations for nonuniformly expanding systems with general return times,}
Ergodic Theory Dynam. Systems \textbf{34} (2014) 893--918.

\bibitem[PS75]{PS75} W.~Philipp and W.F.~Stout, 
\emph{Almost sure invariance principle for partial sums of weakly dependent random variables,}
Mem. Amer. Math. Soc. \textbf{161} (1975).

\bibitem[S63]{Si63} Ya.G.~Sinai,
\emph{On the foundations of the ergodic hypothesis for a dynamical system of statistical mechanics,}
Doklady Akademii Nauk SSSR {\bf 153}:6 (1963), 1261--1264.

\bibitem[S70]{Si70} Ya.G.~Sinai,
\emph{Dynamical systems with elastic reflections. Ergodic properties of dispersing billiards,}
Uspehi Mat. Nauk. {\bf 25}:2 (1970), 141--192.

\bibitem[T21]{Tiozzo}  G.~Tiozzo, \emph{Metrics on trees I. The tower algorithm for interval maps,}
arXiv:2112.02398,  v1 (December 2021).

\bibitem[W75]{W75} P.~Walters,
\emph{An introduction to Erdogic Theory,}
Springer-Verlag New York, 1982.

\bibitem[Y98]{Y98} L.-S. Young,
\emph{Statistical properties of dynamical systems with some hyperbolicity,}
Ann. of Math. \textbf{147} (1998), 585--650.

\bibitem[Y99]{Y99} L.-S. Young,
\emph{Recurrence times and rates of mixing,}
Israel J. Math. \textbf{110} (1999), 153--188.

\end{thebibliography}
\end{document}